\makeatletter
\def\input@path{{AOAP Submission/}}
\makeatother
\documentclass[aap]{imsart}

\RequirePackage{amsthm,amsmath,amsfonts,amssymb,mathtools}
\RequirePackage[numbers,sort&compress]{natbib}
\RequirePackage[colorlinks,citecolor=blue,urlcolor=blue]{hyperref}
\RequirePackage{graphicx}
\RequirePackage{tikz}
\usetikzlibrary{arrows.meta,calc,positioning}

\graphicspath{{AOAP Submission/figures/}{figures/}}

\startlocaldefs
\theoremstyle{plain}
\newtheorem{theorem}{Theorem}[section]
\newtheorem{proposition}[theorem]{Proposition}
\newtheorem{corollary}[theorem]{Corollary}
\newtheorem{lemma}[theorem]{Lemma}
\theoremstyle{definition}
\newtheorem{assumption}[theorem]{Assumption}
\newtheorem{definition}[theorem]{Definition}
\newtheorem{remark}[theorem]{Remark}

\newcommand{\R}{\mathbb R}

\newcommand{\E}{\mathbb E}
\newcommand{\Pp}{\mathbb P}
\newcommand{\Law}{\operatorname{Law}}
\newcommand{\TV}{\operatorname{TV}}
\newcommand{\Lip}{\operatorname{Lip}}
\newcommand{\1}{\mathbf{1}}

\newcommand{\eps}{\varepsilon}

\newcommand{\W}{\mathsf W}

\endlocaldefs

\begin{document}
\begin{frontmatter}
\title{Uniform-in-Time Boltzmann Mean-Field Limits for Anchored Binary Opinion Dynamics}
\runtitle{Uniform-in-Time Mean-Field Limits for Opinion Dynamics}
\begin{aug}
\author[A]{\fnms{Jingyi}~\snm{Zhang}\ead[label=e1]{jzhang3450@gatech.edu}}
\author[B]{\fnms{Debankur}~\snm{Mukherjee}\ead[label=e2]{debankur.mukherjee@isye.gatech.edu}}
\address[A]{School of Mathematics, Georgia Institute of Technology\printead[presep={,\ }]{e1}}
\address[B]{H. Milton Stewart School of Industrial and Systems Engineering,
Georgia Institute of Technology\printead[presep={,\ }]{e2}}
\end{aug}

\begin{abstract}
We study a continuous-time opinion model in which agents interact in pairs and each agent has a fixed anchor. At each interaction, both opinions are updated according to a possibly nonlinear rule with random inputs. Under suitable stability and moment assumptions, we establish uniform-in-time propagation of chaos: as the population grows, any fixed number of agents become asymptotically independent, with a common law solving a nonlinear Boltzmann equation. The proof combines a graphical approximation on finite time intervals with exponential convergence of the finite system and its nonlinear limit to their respective stationary laws.

We examine two applications. For an anchored variant of the Friedkin–Johnsen model with random coefficients, we give conditions for sub-Gaussian or power-law tails of stationary deviations from the anchors. Occasional overreaction can produce power-law tails with Gaussian anchors and noise, even as the nonlinear law converges exponentially fast to stationarity. We also study an anchored model of biased assimilation, determining when a nearby opinion moves toward or away from neutrality with the anchor and incoming evidence held at neutral values. Simulations illustrate the tail predictions and show how a single peak in the smoothed empirical opinion profile can split into two.
\end{abstract}

\begin{keyword}[class=MSC]
\kwdgroup[type=primary]{\kwd{60K35}}
\kwdgroup[type=secondary]{\kwd{60J76}\kwd{91D30}}
\end{keyword}
\begin{keyword}
\kwd{Propagation of chaos}
\kwd{Mean-field limits}
\kwd{Opinion dynamics}
\kwd{Nonlinear Boltzmann equation}
\kwd{Wasserstein contraction}
\kwd{Stationary distributions}
\kwd{Power-law tails}
\end{keyword}
\end{frontmatter}

\section{Introduction}

Opinion dynamics models seek to explain how individual responses to
social influence produce agreement or persistent disagreement
\citep{Friedkin1990,HegselmannKrause2002,Castellano2009}.
Two populations can have the same average opinion even though most
opinions in one are close to that average, while those in the other
form two distinct groups. The frequency of opinions far from an
individual's private baseline is another feature that an average does
not capture. To study these features after many interactions, we need
to connect individual response rules to a description of the population
that remains accurate over long times.

Classical models describe several mechanisms behind these outcomes.
In DeGroot's model, agents update their opinions by taking weighted
averages \citep{DeGroot1974}. The Friedkin--Johnsen model retains an
agent's initial view as a private baseline, or anchor, in subsequent
updates, allowing disagreement to persist
\citep{Friedkin1990,FriedkinJohnsen2011}. Bounded-confidence models
restrict influence to opinions that are sufficiently close, allowing
separate groups to persist
\citep{Deffuant2000,HegselmannKrause2002,Lorenz2007}.
Biased assimilation introduces a different mechanism: agents favor
information that agrees with their current opinions, so the same
evidence can reinforce different views \citep{Dandekar2013}.
The timing of interactions can also affect the evolution;
\citet{ChuPorter2026} study this issue in bounded-confidence models.

\paragraph*{Model}
We consider a population of \(N\) agents who interact in pairs at
random times. Agent \(i\) has opinion \(X_i^N(t)\in\R\) and a fixed
anchor \(B_i\in\R\). The anchor need not coincide with the
initial opinion. Each unordered pair has an independent Poisson
clock, with the same rate for every pair and a fixed total interaction
rate for each agent. At each interaction epoch, both opinions are
updated according to a possibly nonlinear rule that may depend on
both opinions, both anchors, and fresh random inputs. These inputs may
be shared by the two agents or sampled separately for each.

The limiting description is a nonlinear Boltzmann jump process. At
each jump, an agent samples an independent partner from the current
limiting law and applies the same interaction rule. This gives an
equation for the evolving distribution of opinion--anchor pairs. The
main question is whether this description remains accurate after
arbitrarily many interactions. A result on each fixed time interval
does not settle this question: the error bound may deteriorate as the
interval grows, and the finite population may eventually behave
differently from its mean-field limit.

\paragraph*{Main results}
Our main result establishes uniform-in-time propagation of chaos.
Write \(Z_i^N(t)=(X_i^N(t),B_i)\), and let \(f_t\) be the nonlinear
law with initial distribution \(f_0\). When the initial opinion--anchor pairs are
independent with common law \(f_0\), Theorem \ref{thm:main-w1} gives,
under the invariance, contraction, and moment assumptions stated there,
\[
 \lim_{N\to\infty}\sup_{t\ge0}
 W_{1,k}\left(
   \Law(Z_1^N(t),\ldots,Z_k^N(t)),f_t^{\otimes k}
 \right)=0
 \qquad\text{for every fixed }k\ge1.
\]
Here \(W_{1,k}\) is Wasserstein distance of order one on the space of
\(k\) opinion--anchor pairs, as defined in Section
\ref{subsec:state-metrics}. Thus a fixed number of agents become
asymptotically independent, with common law \(f_t\), and the
approximation remains valid at observation times that grow with the
population size.

Our stability assumption requires an interaction to contract the
expected distance between two copies that use the same anchors and
random inputs. A separate drift condition keeps a moment of order greater
than one uniformly bounded. Under these assumptions, the
nonlinear law and the finite-system law converge exponentially fast
to their respective stationary laws, with rates independent of
\(N\). For a fixed interaction rule and anchor distribution, the
stationary nonlinear law is independent of the initial opinions.
Differences in anchors and the fresh random inputs can sustain a spread
of opinions as the influence of the initial opinions fades.

We also prove uniform-in-time convergence in Wasserstein
distance of order two under a contraction condition in mean square.
For this result, we assume either a bounded state space or a drift
condition controlling a moment of order greater than two
(Corollary \ref{cor:w2}).

The proof starts by tracing the past interactions that can affect a
given collection of agents. This graphical argument gives an explicit
estimate in total variation for their joint trajectories on every
finite time interval (Theorem \ref{thm:finite-time-main}). It requires
only measurability of the interaction rule and therefore also applies
to discontinuous rules, such as the confidence threshold in
Deffuant--Weisbuch dynamics \citep{Deffuant2000}. The bound grows exponentially with the
time horizon. To obtain a uniform-in-time bound, we use convergence to
the stationary laws to show that a fixed number of agents are also
asymptotically independent under the finite stationary law. This
controls the error after the initial finite interval.

The strict inequality in the contraction assumption cannot in general
be dropped. In the example in Appendix
\ref{subsec:critical-consensus}, both agents in an interacting pair
adopt the same one of their two opinions, chosen with equal
probability. Opinions remain bounded, and the expected distance
between two copies never increases under the coupling. For an initial
opinion law that is not a point mass, the nonlinear law remains
unchanged and gives the law of any one agent exactly, at every time.
Yet every finite population eventually agrees on a single random
opinion. A spread in the one-agent law can therefore reflect uncertainty
about the eventual consensus, rather than disagreement within a
population.

\paragraph*{Applications}
The uniform approximation also controls empirical averages of bounded
Lipschitz functions. For a large population observed at a late time,
these averages are close to their values under the stationary nonlinear
law for the given anchor distribution. This connects the stationary
law to quantities measured in a single large population. We use two
examples to examine how the response rule affects large departures
from private anchors and the development of two peaks.

The first application concerns an anchored variant of the
Friedkin--Johnsen model in which the coefficient multiplying an
opinion's deviation from its anchor varies randomly from one
interaction to the next.
This coefficient can exceed one, even though the dynamics is
contractive on average. At stationarity, we derive an affine
stochastic recurrence for the deviation of an opinion from its
anchor. We use this recurrence to
give conditions for sub-Gaussian or power-law tails and to characterize
the exponent in the power-law case (Theorem \ref{thm:tail}). The
power-law analysis uses the classical theory of random difference
equations \citep{Kesten1973,Goldie1991,Buraczewski2016}.

The example in Section \ref{sec:numerics} shows the effect of occasional
overreaction while keeping the anchors and noise Gaussian. With a
fixed coefficient less than one, the stationary deviations have
sub-Gaussian tails; allowing occasional overreaction produces
power-law tails. In this model, the response rule can therefore be a
source of extreme opinions even when the private baselines and external
inputs have light tails. Both versions have finite stationary variance
and satisfy our Wasserstein--2 assumptions. Rapid convergence is therefore compatible with very different
frequencies of opinions far from their anchors.

The second application concerns a smooth anchored version of the
biased-assimilation model of \citet{Dandekar2013}. Here reinforcement
of an agent's current view competes with a continuing pull toward a
private anchor. With both the anchor and the incoming evidence held
at neutrality, we give explicit conditions under which a nearby
opinion moves toward or away from the neutral value
(Theorem \ref{thm:modal}).
In simulations of the full binary system, the smoothed empirical
opinion profile develops two peaks even though the initial opinion
distribution and the anchor distribution each have a single peak. These simulations include
parameters outside our contraction assumptions. The theorem concerns
the update with neutral inputs; the observation of two peaks is
numerical.

\paragraph*{Related work}
Our finite-time argument follows the interaction-graph approach of
\citet{GrahamMeleard1997}. For a continuous-time version of Deffuant dynamics,
\citet{GomezSerrano2012} already obtain propagation of chaos in total
variation on path space, including the exact confidence threshold.
Mean-field limits for continuous opinion and gossip models were
studied by \citet{ComoFagnani2011}; see also \citet{Sznitman1991} for
the general theory of propagation of chaos.

For a class of random linear binary updates that contract on average,
\citet{CortezFontbona2016} obtain quantitative Wasserstein estimates
that are uniform in time. Uniform
estimates for Kac's one-dimensional model and for Maxwell molecules
are proved in \citet{Cortez2016Uniform} and
\citet{CortezFontbona2018}. Within
opinion dynamics, \citet{AndreouOlveraCravioto2024} establish a
uniform-in-time approximation, stationary chaos, and commuting
population and time limits for synchronous Friedkin--Johnsen-type
updates on directed random networks with growing average degree. Here we give sufficient conditions for
uniform-in-time chaos and stationary approximation in a class of
nonlinear binary models with fixed anchors and random inputs.

Related uniform-in-time results for diffusions and filtering models
include \citet{DelMoralTugaut2018}, \citet{Durmus2020},
\citet{LackerLeFlem2023}, and \citet{ChenRenWang2022}.
For finite-state mean-field jump processes,
\citet{CohenHuffman2026} obtain uniform-in-time bounds on weak
approximation errors, under suitable regularity assumptions and
exponential stability.

The rest of the paper is organized as follows.
Section \ref{sec:model} introduces the dynamics, notation, and
assumptions. Section \ref{sec:main} states the main results, and
Section \ref{sec:numerics} presents the numerical experiments.
Section \ref{sec:strategy} outlines the proof, and Section
\ref{sec:proofs} gives the details. Appendix \ref{app:classical}
discusses classical models and the role of strict contraction.
Supporting proofs are given in Appendix~\ref{app:technical}.

\section{Model and assumptions}\label{sec:model}

In this section, we describe the finite system and its nonlinear
limit, then introduce the distances and assumptions used in the
main results.

\subsection{Finite population}\label{subsec:finite-system}

Fix \(N\ge2\). Agent \(i\) has an opinion \(X_i^N(t)\in\R\) and an
anchor \(B_i\in\R\). The anchors are sampled at time zero and remain
fixed. We write
\[
 Z_i^N(t)=(X_i^N(t),B_i)\in E:=\R\times\R
\]
for the state of agent \(i\), and set
\(Z^N(t)=(Z_1^N(t),\ldots,Z_N^N(t))\) for the full population.
Each unordered pair \(\{i,j\}\) has an
independent Poisson clock of rate \(2/(N-1)\), and the clocks are
independent of the initial states. Thus each agent interacts at rate
\(2\), and the total interaction rate of the system is \(N\).
Both opinions are updated when their pair's clock rings.

The random inputs at an interaction are collected in a mark with law
\(\theta\) on a measurable space \((U,\mathcal U)\). To use the same
update function for both agents, we specify how the mark changes when
their roles are exchanged. This is described by a measurable map
\(\iota:U\to U\) satisfying
\[
 \iota^2=\mathrm{Id},
 \qquad \theta\circ\iota^{-1}=\theta.
\]
One agent uses the mark \(u\), and the other uses \(\iota u\).
The second identity ensures that these two marks have the same law.

Let
\[
 \Phi:\R^4\times U\to\R
\]
be \(\mathcal B(\R^4)\otimes\mathcal U\)-measurable. In
\(\Phi(x,y;b,c,u)\), the variables \(x,b\) are the opinion and anchor
of the agent being updated, and \(y,c\) are those of its partner.
At a ring time \(t\) of the pair \(\{i,j\}\), with \(i<j\), sample
a mark \(U\sim\theta\), independently of the clocks and the past, and
set
\begin{equation}\label{eq:finite-update}
\begin{aligned}
 X_i^N(t)&=\Phi(X_i^N(t-),X_j^N(t-);B_i,B_j,U),\\
 X_j^N(t)&=\Phi(X_j^N(t-),X_i^N(t-);B_j,B_i,\iota U).
\end{aligned}
\end{equation}
The two updates use the opinions just before the interaction.
Since \(\iota\) preserves \(\theta\), the law of the pair update does
not depend on which agent is listed first.

Taking \(\iota=\mathrm{Id}\) allows both agents to use the same mark.
For independent random inputs, the mark can consist of two independent
components with the same law. In that case, take
\(\iota(u_1,u_2)=(u_2,u_1)\) and let \(\Phi\) use only the first
component of the mark supplied to it.

\subsection{Nonlinear Boltzmann equation}\label{subsec:nonlinear-law}

Write \(\mathcal P(S)\) for the probability laws on a measurable
space \(S\), and \(\langle\varphi,\mu\rangle=\int\varphi\,d\mu\).

To define the limiting update, take two independent opinion--anchor
pairs with a common law \(\mu\in\mathcal P(E)\) and an independent
mark with law \(\theta\). Let \(T\mu\) be the law of the
first agent's opinion and anchor after the update. This is the usual Boltzmann
description of a binary interaction
\citep{GrahamMeleard1997,CortezFontbona2016}. Thus
\begin{equation}\label{eq:Tdef}
 \langle\varphi,T\mu\rangle
 :=\int_{E\times E\times U}
 \varphi(\Phi(x,y;b,c,u),b)\,\theta(du)\,\mu(dx,db)\,\mu(dy,dc),
\end{equation}
for \(\varphi\in B_b(E)\), the space of bounded measurable
real-valued functions on \(E\).

Starting from a law \(f_0\in\mathcal P(E)\), the nonlinear law
\((f_t)_{t\ge0}\) evolves according to
\begin{equation}\label{eq:nl}
 \frac{d}{dt}\langle\varphi,f_t\rangle
 =2\{\langle\varphi,Tf_t\rangle-\langle\varphi,f_t\rangle\},
 \qquad\varphi\in B_b(E).
\end{equation}
The difference \(Tf_t-f_t\) records the change in law due to one
update, and the factor \(2\) is the interaction rate of one agent.
Equivalently,
\begin{equation}\label{eq:mild}
 f_t=e^{-2t}f_0+2\int_0^t e^{-2(t-s)}T(f_s)\,ds,
\end{equation}
where the equality holds after integration against any
\(\varphi\in B_b(E)\). Proposition \ref{prop:wellposed} gives
existence and uniqueness for every initial law \(f_0\).
A stationary law is a fixed point of \(T\). Since the updates leave
the anchors unchanged, \(f_t\) has the same anchor marginal as \(f_0\)
for all \(t\ge0\).

\subsection{State space and distances}\label{subsec:state-metrics}

For the long-time results, fix an anchor law
\(\rho\in\mathcal P(\R)\) and a closed set \(D\subseteq E\).
We will require the dynamics to remain in \(D\), as stated in
Assumption~\ref{ass:inv} below. This includes models on the whole
space, with \(D=E\), and bounded models, such as
\(D=[0,1]\times[0,1]\). Write \(\mathcal P_\rho(D)\) for the
probability laws on \(D\) with anchor marginal \(\rho\), and, for
\(q\ge1\), set
\[
 \mathcal P_{\rho,q}(D):=
 \left\{\mu\in\mathcal P_\rho(D):
 \int_E |x|^q\mu(dx,db)<\infty\right\}.
\]
Uniqueness of a stationary law will always be understood within the
specified domain, anchor distribution, and moment class.

For stability estimates, we compare opinions while keeping their
anchors equal. If \(\mu,\nu\in\mathcal P_\rho(D)\), let
\(\Gamma_\rho(\mu,\nu)\) be the set of laws of
\((X,\widetilde X,B)\) such that \((X,B)\) has law \(\mu\) and
\((\widetilde X,B)\) has law \(\nu\). For \(p\ge1\), define
the anchor-preserving Wasserstein distance by
\begin{equation}\label{eq:condW}
 \W_p^\rho(\mu,\nu)^p
 :=\inf_{\pi\in\Gamma_\rho(\mu,\nu)}
 \int |x-\widetilde x|^p\,\pi(dx,d\widetilde x,db).
\end{equation}
This is a conditional Wasserstein distance, with the anchor as the
conditioning variable \citep{Chemseddine2025}.
It is finite for
\(\mu,\nu\in\mathcal P_{\rho,p}(D)\); no moment condition on
\(\rho\) is needed for this definition.

The propagation-of-chaos statements use the ordinary Wasserstein
distance on the full opinion--anchor space. For \(k\ge1\), let
\(W_{p,k}\) be this distance on \(E^k\), with underlying metric
\begin{equation}\label{eq:Wpk}
 d_{p,k}(z,\widetilde z)^p
 =\sum_{i=1}^k
 \left(|x_i-\widetilde x_i|^p+|b_i-\widetilde b_i|^p\right),
 \qquad z_i=(x_i,b_i),\quad
 \widetilde z_i=(\widetilde x_i,\widetilde b_i).
\end{equation}
We use \(p=1\) in the main theorem and \(p=2\) in
Corollary \ref{cor:w2}. We write \(\Lip(\varphi)\) for the
Lipschitz constant with respect to the metric in use.

For probability measures \(\mu,\nu\) on the same measurable space,
we normalize total variation by
\begin{equation}\label{eq:tv-convention}
 \|\mu-\nu\|_{\TV}
 :=\sup_{\|h\|_\infty\le1}
 \left|\int h\,d\mu-\int h\,d\nu\right|,
\end{equation}
where the supremum is over measurable \(h\). With this convention,
\(\|\mu-\nu\|_{\TV}\in[0,2]\).

\subsection{Assumptions for uniform-in-time convergence}
\label{subsec:uniform-assumptions}

Moment bounds and contraction estimates are also used to obtain
uniform-in-time results for random linear binary interactions
\citep{CortezFontbona2016}. Here we state sufficient conditions for
nonlinear rules with fixed anchors. The first ensures that an
interaction keeps both agents in the chosen domain.

\begin{assumption}[Invariant domain]\label{ass:inv}
The set \(D\subseteq E\) is closed, and for all
\((x,b),(y,c)\in D\),
\[
 (\Phi(x,y;b,c,u),b)\in D
 \qquad\text{for }\theta\text{-a.e. }u.
\]
\end{assumption}

\begin{assumption}[Moment drift]\label{ass:qdrift}
For some \(q>1\), \(\int |b|^q\rho(db)<\infty\), and there exist
constants \(a_x,a_y,c_b,c_c,c_0\ge0\) such that
\begin{equation}\label{eq:qdrift}
 \int_U |\Phi(x,y;b,c,u)|^q\theta(du)
 \le a_x|x|^q+a_y|y|^q+c_b|b|^q+c_c|c|^q+c_0
\end{equation}
for all \((x,b),(y,c)\in D\), with \(a_x+a_y<1\).
\end{assumption}

Starting with \(f_0\in\mathcal P_{\rho,q}(D)\), this condition keeps
the \(q\)-th opinion moment bounded uniformly over time. The same
bound holds for each agent in the finite system started from
\(f_0^{\otimes N}\), independently of \(N\)
(Lemma \ref{lem:moment}).

Finally, we compare two copies of an interaction with the same
anchors and the same random mark.

\begin{assumption}[Anchor-preserving \(L^1\) contraction]
\label{ass:w1contr}
There exist \(\ell_x,\ell_y\ge0\) such that
\begin{equation}\label{eq:w1contr}
 \int_U
 |\Phi(x,y;b,c,u)-\Phi(x',y';b,c,u)|\theta(du)
 \le \ell_x|x-x'|+\ell_y|y-y'|
\end{equation}
whenever \((x,b),(x',b),(y,c),(y',c)\in D\), and
\(\ell:=\ell_x+\ell_y<1\).
\end{assumption}

Applying this bound to both agents shows that the expected sum of
the two absolute opinion differences after the interaction is at most
\(\ell\) times their sum before it. The contraction is thus an
average over the random inputs. For a particular realization of these
inputs, the sum may increase.

Appendix \ref{app:classical} shows how these conditions apply to
anchored linear and smooth bounded-confidence rules. In the convex
linear example, the contraction condition reduces to assigning
positive weight to the anchor on average.

\section{Main results}\label{sec:main}

We first state the uniform-in-time approximation and the finite-time
estimate used in its proof. We then give the Wasserstein--2 result
and turn to the two applications.

\subsection{Uniform-in-time propagation of chaos}\label{subsec:main-uit}

The main theorem below gives an approximation that remains valid however long
the population has been evolving. It also identifies the limiting
stationary law and relates it to empirical averages in a large finite
population.

\begin{theorem}[Uniform-in-time propagation of chaos]\label{thm:main-w1}
Assume Assumptions \ref{ass:inv}, \ref{ass:qdrift}, and
\ref{ass:w1contr}. Let \(f_0\in\mathcal P_{\rho,q}(D)\), and let
\((f_t)_{t\ge0}\) solve \eqref{eq:nl} with initial law \(f_0\).
Start the finite system from \(f_0^{\otimes N}\).

\begin{longlist}[(iii)]
\item[(i)] There is a unique stationary law
\(F_\infty\in\mathcal P_{\rho,q}(D)\), satisfying
\(T(F_\infty)=F_\infty\), and
\begin{equation}\label{eq:w1-relax}
 \W_1^\rho(f_t,F_\infty)
 \le e^{-2(1-\ell)t}\W_1^\rho(f_0,F_\infty),
 \qquad t\ge0.
\end{equation}

\item[(ii)] For every fixed \(k\ge1\),
\begin{equation}\label{eq:uit-poc}
 \lim_{N\to\infty}\sup_{t\ge0}
 W_{1,k}\left(\Law(Z_1^N(t),\ldots,Z_k^N(t)),f_t^{\otimes k}\right)=0.
\end{equation}

\item[(iii)] Let \(\mu_t^N=N^{-1}\sum_{i=1}^N\delta_{Z_i^N(t)}\).
For every bounded Lipschitz function \(\varphi:E\to\R\),
\begin{equation}\label{eq:emp-ft}
 \lim_{N\to\infty}\sup_{t\ge0}
 \E\left|\langle\varphi,\mu_t^N\rangle-\langle\varphi,f_t\rangle\right|=0,
\end{equation}
and
\begin{equation}\label{eq:emp-long}
 \lim_{T\to\infty}\limsup_{N\to\infty}\sup_{t\ge T}
 \E\left|\langle\varphi,\mu_t^N\rangle
       -\langle\varphi,F_\infty\rangle\right|=0.
\end{equation}
\end{longlist}
\end{theorem}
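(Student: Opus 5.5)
For part (i) we use a synchronous coupling of two nonlinear processes that share anchors and marks. Take \(f_0,g_0\in\mathcal P_{\rho,q}(D)\) and an optimal anchor-preserving coupling \((X_0,\widetilde X_0,B)\) for \(\W_1^\rho(f_0,g_0)\). Run both nonlinear jump processes with the same Poisson clock of rate \(2\) and the same marks. At each jump, the partners \((Y,C)\sim f_{t-}\) and \((\widetilde Y,C)\sim g_{t-}\) are drawn from an (almost) optimal anchor-preserving coupling, independently of the past. Assumption~\ref{ass:w1contr} then gives \(\E|X_t-\widetilde X_t|\) after a jump at most \(\ell_x\E|X_{t-}-\widetilde X_{t-}|+\ell_y\W_1^\rho(f_{t-},g_{t-})\). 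Since \(\W_1^\rho(f_t,g_t)\le \E|X_t-\widetilde X_t|=:m(t)\), Gronwall applied to \(m'\le -2m+2\ell m\) (equivalently, via the mild form \eqref{eq:mild}, the map \(T\) is \(\ell\)-Lipschitz in \(\W_1^\rho\)) yields \(\W_1^\rho(f_t,g_t)\le e^{-2(1-\ell)t}\W_1^\rho(f_0,g_0)\). We use Lemma~\ref{lem:moment} to keep everything inside \(\mathcal P_{\rho,q}(D)\). Existence of \(F_\infty\) then follows from Banach's fixed point theorem for \(T\), which is an \(\ell\)-contraction on the set \(\{\mu\in\mathcal P_{\rho,q}(D):\int|x|^q d\mu\le M\}\). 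The drift bound with \(a_x+a_y<1\) makes this set \(T\)-invariant for large \(M\). The set is \(\W_1^\rho\)-complete because \(q>1\) gives uniform integrability and \(D\) is closed. Uniqueness and \eqref{eq:w1-relax} follow by taking \(g_0=F_\infty\).

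For part (ii), we first prove an \(N\)-uniform exponential contraction for the finite system. Couple two copies of the \(N\)-agent system with the same anchors, clocks and marks. Summing Assumption~\ref{ass:w1contr} over both agents of each interacting pair shows that \(\E\sum_i|X_i^N-\widetilde X_i^N|\) decays at rate \(2(1-\ell)\). By exchangeability, the one-agent, and hence the \(k\)-agent, \(W_1\) distance between the finite law at time \(t\) and the finite stationary law \(F^N_\infty\) is at most \(C k e^{-2(1-\ell)t}\), uniformly in \(N\). Existence of \(F^N_\infty\) again comes from contraction plus the moment bound. Now fix \(\eps>0\) and choose \(T\) with \(Cke^{-2(1-\ell)T}<\eps\), using (i) as well. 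On \([0,T]\), Theorem~\ref{thm:finite-time-main} gives a TV bound of order \(e^{cT}k^2/N\). We convert it to \(W_{1,k}\) by truncation: the uniform \(q\)-moments with \(q>1\) control the tails, and TV controls the bounded part. For \(t\ge T\), we write
\[
W_{1,k}(\Law(Z^N_{1..k}(t)),f_t^{\otimes k})\le W_{1,k}(\cdot,(F^N_\infty)_k)+W_{1,k}((F^N_\infty)_k,F_\infty^{\otimes k})+W_{1,k}(F_\infty^{\otimes k},f_t^{\otimes k}).
\]
The first and third terms are below \(\eps\). For the middle term we use stationarity: start the finite system from \(F_\infty^{\otimes N}\), so that the nonlinear law stays \(F_\infty\). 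Running for time \(T\), its \(k\)-marginal is close to \(F_\infty^{\otimes k}\) by the finite-time estimate, and close to \((F^N_\infty)_k\) by the finite-system contraction. Hence the middle term is at most \(\eps+o_N(1)\).

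Part (iii) follows from (ii) with \(k=2\). Expanding \(\E|\langle\varphi,\mu^N_t\rangle-\langle\varphi,f_t\rangle|^2\) by exchangeability gives terms \(\frac1N\E\varphi(Z_1)^2\), \(\E\varphi(Z_1)\varphi(Z_2)\), \(\E\varphi(Z_1)\) and \(\langle\varphi,f_t\rangle\). Since \(\varphi\) and \(\varphi\otimes\varphi\) are bounded Lipschitz, these are controlled uniformly in \(t\) by \(W_{1,2}\). This gives \eqref{eq:emp-ft}. Then \eqref{eq:emp-long} follows from \(|\langle\varphi,f_t\rangle-\langle\varphi,F_\infty\rangle|\le\Lip(\varphi)\W_1^\rho(f_t,F_\infty)\) together with \eqref{eq:w1-relax}.

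The main obstacle is the stationary-chaos step, i.e.\ the middle term for \(t\ge T\), together with making the finite-system contraction genuinely \(N\)-uniform, in particular existence of \(F^N_\infty\) on a possibly noncompact \(D\). If existence is awkward, we would avoid \(F^N_\infty\) altogether. Instead we couple the system started from \(f_0^{\otimes N}\) at time \(t\) with the system started from \(\Law(Z^N(t-T))\)-independent product data \(f_{t-T}^{\otimes N}\) via the synchronous coupling. The required closeness of \(\Law(Z^N(t-T))\) to a product law is not available directly, so the stationary route via \(F_\infty^{\otimes N}\) and the contraction of the finite system remains the first thing to try.
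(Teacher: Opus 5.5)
Your proposal is correct and follows essentially the same route as the paper. Part (i) uses iteration of $T$ on a moment-controlled set together with a synchronous coupling and Gronwall. Part (ii) uses the graphical total-variation estimate converted to $W_{1,k}$ on $[0,T]$, and for $t\ge T$ the triangle inequality through $(F_\infty^N)^{(k)}$ and $F_\infty^{\otimes k}$, with stationary chaos obtained by running the population started from $F_\infty^{\otimes N}$. Part (iii) uses the $k=2$ second-moment expansion.

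The differences are minor. You fix $T$ and $\eps$ in the stationary-chaos step where the paper takes $s_N=\tfrac18\log N$. You draw partners from a near-optimal coupling of the marginals rather than from the joint law of the coupled triple. Your ``by exchangeability'' step needs the coupling symmetrized over permutations of the agents, as the paper does. Your closing fallback is not needed.
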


The uniformity in \eqref{eq:uit-poc} concerns the joint law at a common
observation time. In particular, that time may depend on \(N\) and
tend to infinity. The case \(k=2\) gives the empirical conclusion
by controlling the second moment of the centered average;
this is the usual connection between propagation of chaos and empirical
measures \citep{Sznitman1991}. Write \(m_\infty\) for the
opinion marginal of \(F_\infty\): it is the law of \(X\) when
\((X,B)\sim F_\infty\). Taking \(\varphi(x,b)=\psi(x)\) in
\eqref{eq:emp-long} gives the long-time limit of averages of bounded
Lipschitz functions of opinions. The proof is given in Section
\ref{subsec:proof-main}.

The strict inequality \(\ell<1\) cannot in general be replaced
by \(\ell\le1\). Appendix \ref{subsec:critical-consensus} gives
a rule with \(\ell=1\) for which the finite population reaches
random consensus while the nonlinear law remains equal to its
nondegenerate initial law.

\begin{remark}[Empirical observables]\label{rem:empirical-observables}
The moment bound with \(q>1\) allows a truncation argument, so the
conclusions extend to empirical means. For empirical variances, the
truncation argument uses uniform integrability of squared opinions;
a uniform moment bound of some order greater than two is sufficient.
In the long-time
limit, empirical tail probabilities can be recovered at thresholds
where \(m_\infty\) has no atom. A sufficient condition for convergence
in probability of a fixed quantile is that the distribution function
of \(m_\infty\) strictly cross the corresponding probability level;
see \citet[Chapter~21]{vanDerVaart1998}.
The tail exponent and the number of modes require further analysis,
as in the applications below.
\end{remark}

\subsection{Finite-horizon propagation of chaos}\label{subsec:finite-horizon-main}

On a fixed time interval, a graphical construction gives a more general
approximation. It applies to measurable interaction rules, including
discontinuous ones, and compares entire trajectories. This follows the
approach of \citet{GrahamMeleard1997}; for bounded-confidence dynamics,
see also \citet{GomezSerrano2012}.

For \(f_0\in\mathcal P(E)\) and \(0\le T<\infty\), let
\(Q_{f_0,T}\) be the law of the nonlinear process on
\(D([0,T],E)\), the space of c\`adl\`ag \(E\)-valued paths.
This process starts from \(f_0\) and jumps at rate
\(2\). At a jump time \(t\), it samples a partner
\((Y,C)\sim f_t\) and a mark \(U\sim\theta\), independently of one
another and of its past, and replaces \((x,b)\) by
\((\Phi(x,Y;b,C,U),b)\).

\begin{theorem}[Finite-time approximation under measurability]
\label{thm:finite-time-main}
Assume that \(\Phi\) is
\(\mathcal B(\R^4)\otimes\mathcal U\)-measurable, and start the
finite system from \(f_0^{\otimes N}\), where
\(f_0\in\mathcal P(E)\). Then, for every \(0\le T<\infty\),
\(N\ge2\), \(1\le k\le N\), and distinct labels
\(i_1,\ldots,i_k\),
\begin{equation}\label{eq:finite-horizon-path-main}
 \left\|\Law\bigl((Z_{i_1}^N(s))_{0\le s\le T},\ldots,
                  (Z_{i_k}^N(s))_{0\le s\le T}\bigr)
              -Q_{f_0,T}^{\otimes k}\right\|_{\TV}
 \le \frac{k(k+1)(e^{4T}-1)}{N-1}.
\end{equation}
Consequently,
\begin{equation}\label{eq:finite-horizon-marginal-main}
 \sup_{0\le t\le T}
 \left\|\Law(Z_{i_1}^N(t),\ldots,Z_{i_k}^N(t))
              -f_t^{\otimes k}\right\|_{\TV}
 \le \frac{k(k+1)(e^{4T}-1)}{N-1}.
\end{equation}
\end{theorem}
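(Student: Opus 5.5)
We will prove Theorem~\ref{thm:finite-time-main} by a Graham--M\'el\'eard interaction-graph coupling. The finite system and $k$ independent copies of the nonlinear process will be built on one probability space, and we will bound the probability that the two constructions disagree on $[0,T]$. Since total variation in the normalization \eqref{eq:tv-convention} is at most twice the probability of the event where coupled variables differ, it suffices to show
\[
 \Pp\Bigl(\exists\, r\le k:\ (Z^N_{i_r}(s))_{s\le T}\neq (\bar Z_r(s))_{s\le T}\Bigr)\le \frac{k(k+1)(e^{4T}-1)}{2(N-1)} .
\]
The marginal bound \eqref{eq:finite-horizon-marginal-main} then follows by projecting paths to time $t$. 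Projection does not increase total variation, and the time-$t$ marginal of $Q_{f_0,T}$ is $f_t$. That last fact follows from the jump description: the one-time law of the nonlinear process solves the mild equation \eqref{eq:mild}, and we identify it with $f_t$ using the uniqueness in Proposition~\ref{prop:wellposed}.

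\textbf{Backward tree.} Fix the tagged labels. For each label we trace backward from $T$ the ring times of its clocks. At rate $2$, a tagged agent's clock rings with some partner $j$. That partner's history on $[0,t)$ is needed, so we explore $j$'s past recursively, producing a backward random tree of labels. For the nonlinear process we use the same structure with partners drawn from an infinite supply of fresh labels. There, every node has an independent initial state with law $f_0$, independent marks, and independent sub-trees. That the nonlinear process on $[0,T]$ is exactly the root of such a Boltzmann tree (each partner distributed as $f_t$ at time $t$) is the standard representation; we will check it via the mild equation.

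\textbf{Coupling and the failure events.} We couple the finite system's exploration with the ideal tree so that they agree as long as no label repeats. A repetition is a recollision, or a label shared between the trees of two tagged agents. Each tree node branches at rate $2$ going back in time, so the expected number of nodes in one tree up to depth $T$ is $e^{2T}$. Whenever a new partner is chosen, the probability it coincides with one of the $m$ labels already present is at most $m/(N-1)$. Summing the expected number of collision events therefore gives an integral of the form
\[
 \int_0^T \frac{2\,\E[M_s]\cdot \E[M_s]}{N-1}\,ds,
\]
where $M_s$ is the total number of nodes across the $k$ trees explored to depth $s$, with $\E M_s\lesssim k e^{2s}$. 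We have to organize this so that the result is at most $k(k+1)(e^{4T}-1)/(2(N-1))$. The natural split is into two parts. Self-collisions within one tree contribute of order $k(e^{4T}-1)/(N-1)$. Cross-tree collisions over the $k(k-1)/2$ pairs contribute of order $k(k-1)(e^{4T}-1)/(N-1)$. We will also need that $e^{2s}\cdot e^{2s}$ integrated at rate $2$ gives $(e^{4T}-1)/2$.

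\textbf{Main obstacle.} The hard step is the bookkeeping that makes the coupling exact while yielding precisely the constant $k(k+1)$. The finite clocks have rate $2/(N-1)$ per pair, and the partner draws must be coupled with uniform labels from the unexplored set versus fresh labels. We will handle this by a sequential construction in reverse time, stopped at the first collision. Before the first collision, all explored variables (initial states, marks, clock times) are independent with the correct laws in both systems, and a union bound over collision opportunities controls the stopped event. Measurability of $\Phi$ suffices throughout, since the states are deterministic measurable functions of the tree data.
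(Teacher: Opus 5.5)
Your architecture matches the paper's: a backward exploration, a Boltzmann forest whose root paths have law $Q_{f_0,T}^{\otimes k}$ (identified through the mild equation and Proposition~\ref{prop:wellposed}), a coupling up to the first repeated label, and $\|\cdot\|_{\TV}\le 2\Pp(\tau\le T)$.

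\textbf{The gap is the collision estimate that is supposed to produce the constant.} At backward time $s$, the intensity of repeated labels is the random quantity $2M_s\cdot M_s/(N-1)$. The paper uses $2M_s(M_s-1)/(N-1)$, since the proposer's own label is excluded. You must therefore integrate $\E[M_s^2]$ or $\E[M_s(M_s-1)]$, not $\E[M_s]\cdot\E[M_s]$. This expectation does not factor, and the factored version is not an upper bound. Here $M_s$ is a Yule process with birth rate $2n$ started from $k$; it is nondegenerate for $s>0$, so $\E[M_s^2]>(\E M_s)^2$. Carried out literally, your display gives $k^2$ in place of $k(k+1)$. That would be a sharper bound than the theorem claims, which signals that the step is invalid.

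The missing ingredient is the second factorial moment of the Yule process. Put $m_1=\E M_s$ and $m_2=\E[M_s(M_s-1)]$. Then $m_1'=2m_1$ and $m_2'=4m_2+4m_1$, with $m_1(0)=k$ and $m_2(0)=k(k-1)$. Hence $m_2(s)=k(k+1)e^{4s}-2ke^{2s}\le k(k+1)e^{4s}$, and $\Pp(\tau\le T)\le\frac{2k(k+1)}{N-1}\int_0^T e^{4s}\,ds$. With your $m/(N-1)$, you would integrate $\E[M_s^2]=k(k+1)e^{4s}-ke^{2s}$ instead, which gives the same bound. The extra $k$ comes from the Yule variance.

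Your self/cross split also works, provided the within-tree term uses $\E[Y(Y-1)]=2e^{4s}-2e^{2s}$ for a single tree rather than $(\E Y)^2=e^{4s}$. The cross-tree term is then $k(k-1)(e^{4T}-1)/(2(N-1))$, not $k(k-1)(e^{4T}-1)/(N-1)$. Together with the within-tree contribution $k(e^{4T}-1)/(N-1)$, this gives exactly $k(k+1)(e^{4T}-1)/(2(N-1))$.

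\textbf{The bookkeeping you defer also needs one concrete idea, because the rates on active pairs do not match.} With $y$ active labels, the forest makes repeated proposals at total rate $2y(y-1)/(N-1)$, since each active vertex proposes each other active label at rate $2/(N-1)$. The finite system rings between two active labels only at total rate $\binom y2\frac{2}{N-1}=y(y-1)/(N-1)$. The paper resolves this as follows:
\begin{itemize}
\item Each repeated proposal is retained with an independent probability $1/2$, and the retained ones generate the finite active-pair rings.
\item The matching stops at every repeated proposal, including the rejected ones.
\item Proposals of unused labels are matched one-to-one with the clocks $\{a,j\}$.
\item Marks are matched by using $u$ when $a<j$ and $\iota u$ when $j<a$.
\item After $\tau$, the two explorations are completed independently.
\end{itemize}
This is why the stopping intensity is $2y(y-1)/(N-1)$ rather than half of it, and why both sides have the correct law. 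A union bound over ``collision opportunities'' does not by itself specify such a coupling.
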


The proof is given in Section \ref{subsec:proof-graphical}.
The error bound grows with \(T\), so this estimate alone gives no control
over arbitrarily long times. In the proof of Theorem
\ref{thm:main-w1}, the moment bound converts it to a Wasserstein--1
estimate on finite intervals. Convergence of the finite and nonlinear
laws to their stationary laws then supplies the control needed at later
times.

\subsection{Convergence in Wasserstein distance of order two}
\label{subsec:w2-main}

A contraction estimate in mean square gives uniform-in-time
approximation in Wasserstein distance of order two. This is a separate
assumption on the interaction rule. It compares the same opinions and
anchors as Assumption \ref{ass:w1contr}, but controls the expected
squared difference after an update.

\begin{assumption}[Contraction in mean square]\label{ass:square}
There exist \(L_x,L_y\ge0\) such that
\begin{equation}\label{eq:square}
 \int_U |\Phi(x,y;b,c,u)-\Phi(x',y';b,c,u)|^2\theta(du)
 \le L_x^2|x-x'|^2+L_y^2|y-y'|^2
\end{equation}
whenever \((x,b),(x',b),(y,c),(y',c)\in D\), with
\(\kappa^2:=L_x^2+L_y^2<1\).
\end{assumption}

\begin{corollary}[Uniform-in-time Wasserstein--2 approximation]
\label{cor:w2}
Assume Assumptions \ref{ass:inv} and \ref{ass:square}, and start the
finite system from \(f_0^{\otimes N}\). If \(D\) is bounded, assume
\(f_0\in\mathcal P_\rho(D)\). If \(D\) is unbounded, assume
Assumption \ref{ass:qdrift} with \(q=r\) for some \(r>2\), and
take \(f_0\in\mathcal P_{\rho,r}(D)\). Let \((f_t)_{t\ge0}\)
solve \eqref{eq:nl} with initial law \(f_0\). Then, for every fixed
\(k\ge1\),
\[
 \lim_{N\to\infty}\sup_{t\ge0}
 W_{2,k}\left(\Law(Z_1^N(t),\ldots,Z_k^N(t)),f_t^{\otimes k}\right)=0.
\]
\end{corollary}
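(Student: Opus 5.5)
We follow the same architecture as the proof of Theorem \ref{thm:main-w1}, replacing the $L^1$ contraction by the mean-square one and the $q$-moment by a moment that makes squared opinions uniformly integrable. By Jensen, Assumption \ref{ass:square} gives \eqref{eq:w1contr} with $\ell_x=L_x$, $\ell_y=L_y$. However, $L_x+L_y$ may exceed one, so we do not invoke Theorem \ref{thm:main-w1} directly. Instead, we rerun the coupling arguments in squared form.

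\emph{Step 1 (relaxation in $\W_2^\rho$).} We couple two nonlinear processes, or two finite systems, with the same anchors, clocks and marks. For the finite system we take the synchronous coupling of two copies started from $f_0^{\otimes N}$ and from the finite stationary law. Applying \eqref{eq:square} to both agents of an interacting pair, the expected sum of squared differences $\sum_i|X_i-\widetilde X_i|^2$ decreases by at least the factor $\kappa^2$ per interaction. The total rate is $N$, and each interaction touches two agents. Hence
\[
\frac{d}{dt}\E\sum_i|X_i-\widetilde X_i|^2\le -2(1-\kappa^2)\E\sum_i|X_i-\widetilde X_i|^2 .
\]
This gives exponential convergence at rate $2(1-\kappa^2)$, uniformly in $N$. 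The same computation works for the nonlinear law. Here a Banach fixed-point argument for $T$ on $\mathcal P_{\rho,2}(D)$, using $\W_2^\rho(T\mu,T\nu)^2\le\kappa^2\W_2^\rho(\mu,\nu)^2$, gives the unique stationary $F_\infty$ and $\W_2^\rho(f_t,F_\infty)\le e^{-(1-\kappa^2)t}\W_2^\rho(f_0,F_\infty)$. Existence of finite stationary laws $F^N_\infty$ for the finite system follows from the contraction together with the moment bound, via Cauchy sequences in time. Since anchors coincide under these couplings, $\W_2^\rho$ dominates $W_{2,k}$ of the $k$-marginals, up to a factor of $k$.

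\emph{Step 2 (finite horizon in $W_2$).} Theorem \ref{thm:finite-time-main} gives a total variation bound $\eps_{N,T}=k(k+1)(e^{4T}-1)/(N-1)$ at each $t\le T$. We convert it to $W_2$ using the standard inequality $W_2^2\le\E[d^2\1_{\text{uncoupled}}]$ under an optimal TV coupling. This is bounded by $\eps_{N,T}M^2$ if $D$ is bounded. Otherwise, split at a level $R$: the bound is at most $R^2\eps_{N,T}+C R^{2-r}$. Here $C$ comes from the uniform $r$-th moment bound of Lemma \ref{lem:moment} for the finite system and for $f_t$, together with $\int|b|^r\rho<\infty$. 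Optimizing over $R$ shows that, for each fixed $T$, $\sup_{t\le T}W_{2,k}\to0$ as $N\to\infty$.

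\emph{Step 3 (gluing) and main obstacle.} For $t\ge T$ we use the triangle inequality:
\[
W_{2,k}(\Law(Z^N_{1..k}(t)),f_t^{\otimes k})\le W_{2,k}(\Law(Z^N(t))_k,\Law_{F^N_\infty,k})+W_{2,k}(\Law_{F^N_\infty,k},F_\infty^{\otimes k})+W_{2,k}(F_\infty^{\otimes k},f_t^{\otimes k}).
\]
The first and third terms are $O(e^{-(1-\kappa^2)T})$ by Step 1, with constants independent of $N$ because second moments are bounded uniformly. The middle term requires stationary chaos: the $k$-marginals of $F^N_\infty$ must converge to $F_\infty^{\otimes k}$ in $W_2$. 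This is the main obstacle. We handle it exactly as in the $W_1$ proof. Start the finite system from $f_0^{\otimes N}$ with $f_0=F_\infty$, so that $f_t\equiv F_\infty$. Then
\[
W_{2,k}(\Law_{F^N_\infty,k},F_\infty^{\otimes k})\le Ce^{-(1-\kappa^2)T}+\sup_{t\le T}W_{2,k}(\Law(Z^N(t))_k,F_\infty^{\otimes k}),
\]
and the last term vanishes as $N\to\infty$ by Step 2. Letting $N\to\infty$ first and then $T\to\infty$ completes the argument. The delicate point is keeping all moment constants uniform in $N$ and $t$; this is exactly why $r>2$ is needed when $D$ is unbounded.
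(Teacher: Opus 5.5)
Your proposal is correct and follows the paper's proof essentially step for step: synchronous mean-square contraction for the nonlinear and finite systems, conversion of the graphical total-variation bound to \(W_{2,k}\) through the \(r\)-th moment (your truncation at level \(R\) is equivalent to the H\"older step in Lemma~\ref{lem:interp}), stationary chaos by comparing \(F_\infty^N\) with the population started from \(F_\infty^{\otimes N}\), and a split at time \(T\). The only cosmetic difference is that you send \(N\to\infty\) and then \(T\to\infty\) in the stationary-chaos step instead of choosing \(s_N=\tfrac18\log N\); you should also state two details the paper makes explicit, namely that the coupling is exchangeable (averaged over simultaneous permutations, using exchangeability of \(F_\infty^N\)) so that \(\E D_N^{(2)}\) controls the first \(k\) coordinates, and that \(F_\infty\in\mathcal P_{\rho,r}(D)\) and \(F_\infty^N\) has second moment bounded uniformly in \(N\), both by lower semicontinuity along the approximating sequences.
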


The proof, given in Section \ref{subsec:proof-w2}, uses the moment
bound to convert the finite-time total-variation estimate to a
Wasserstein estimate. Because the cost is now quadratic, a moment of order strictly
greater than two is used on an unbounded domain.

\subsection{Applications to opinion distributions}
\label{subsec:stationary-mechanisms}

The first application concerns the tails of the stationary opinion
distribution relative to the anchors. The second examines how an
interaction affects an opinion near neutrality under biased assimilation.

\paragraph*{Stationary tails}
Consider the following variant of an anchored Friedkin--Johnsen rule:
\begin{equation}\label{eq:rsfj}
 \Phi(x,y;b,c,U)=b+A(x-b)+H(y)+\sigma\xi,
\end{equation}
where \(A\ge0\), \(\sigma\ge0\), and \(H\) is bounded and
Lipschitz. At each update, \(A\) multiplies the current deviation from
the anchor; the partner and the noise contribute the remaining two
terms. The coefficient may exceed one, even when the dynamics contracts
on average. In the finite system, take
\(U=(A_1,\xi_1,A_2,\xi_2)\), where the two pairs are independent
copies of \((A,\xi)\), let \(\iota\) exchange the pairs, and let
\(\Phi\) use the first one.

At stationarity, this update leads to a random difference equation.
The following result uses the classical theory of such equations
\citep{Kesten1973,Goldie1991,Buraczewski2016} to give two sufficient
conditions for different tail behavior.

\begin{theorem}[Stationary tails]\label{thm:tail}
Consider \eqref{eq:rsfj}, with \(H\) bounded and Lipschitz.
Suppose that, for some \(q_0>1\),
\[
 \E|B|^{q_0}<\infty,
 \qquad \E|\xi|^{q_0}<\infty,
 \qquad \E A^{q_0}<1,
 \qquad \E A+\Lip(H)<1.
\]
Then Theorem \ref{thm:main-w1} applies on \(D=E\) with \(q=q_0\)
to every initial law \(f_0\in\mathcal P_{\rho,q_0}(E)\),
and there is a
unique stationary law
\(F_\infty\in\mathcal P_{\rho,q_0}(E)\).
Under \(F_\infty\), the deviation \(R=X-B\) is independent of
\(B\) and satisfies
\begin{equation}\label{eq:R-sre}
 R\stackrel{d}{=}AR'+Q,
 \qquad Q=H(Y)+\sigma\xi.
\end{equation}
Here \(R'\) has the same law as \(R\),
\(Y\sim m_\infty\), and \(R'\), \(Y\), and
\((A,\xi)\) are mutually independent. Dependence between \(A\) and
\(\xi\) is allowed.

\begin{longlist}[(iii)]
\item[(i)] If \(A\le a<1\) almost surely and \(\xi\) has sub-Gaussian
tails, then \(R\) has sub-Gaussian tails. If \(B\) also has
sub-Gaussian tails, the same holds for \(X=B+R\).

\item[(ii)] Suppose that, for some \(\alpha>q_0\),
\[
 \E\log A<0,\qquad \Pp(A>1)>0,\qquad \E A^\alpha=1,
 \qquad 0<\E[A^\alpha\log A]<\infty.
\]
Use the conventions \(\log0=-\infty\) and
\(0^\alpha\log0=0\). Assume that
\(\E|\xi|^{\alpha+\eta}<\infty\) for some \(\eta>0\), that
the conditional law of \(\log A\) given \(A>0\) is non-arithmetic,
and that
\[
 \Pp(Ar+Q=r)<1\qquad\text{for every }r\in\R.
\]
Then there are constants \(C_+,C_-\ge0\), with \(C_++C_->0\),
such that
\[
 \lim_{t\to\infty}t^\alpha\Pp(R>t)=C_+,
 \qquad
 \lim_{t\to\infty}t^\alpha\Pp(R<-t)=C_-.
\]
In particular,
\[
 \Pp(|R|>t)\sim(C_++C_-)t^{-\alpha},
 \qquad t\to\infty.
\]
If also \(\E A^2<1\), then \(\alpha>2\) and \(R\) has finite
variance. The opinion \(X=B+R\) has finite variance whenever
\(B\) does.
\end{longlist}
\end{theorem}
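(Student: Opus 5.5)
Three steps: verify the assumptions of Theorem \ref{thm:main-w1} for the rule \eqref{eq:rsfj}; read off the structure of \(F_\infty\) from the fixed-point equation \(T(F_\infty)=F_\infty\); then analyse the random difference equation \eqref{eq:R-sre} with classical tools.

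\emph{Assumptions.} Take \(D=E\), so Assumption~\ref{ass:inv} is trivial. For Assumption~\ref{ass:w1contr}, fix \(b,c\) and the mark. Then
\[
|\Phi(x,y;b,c,u)-\Phi(x',y';b,c,u)|\le A|x-x'|+\Lip(H)|y-y'|,
\]
so after integrating we may take \(\ell_x=\E A\), \(\ell_y=\Lip(H)\), and \(\ell<1\) by hypothesis. For Assumption~\ref{ass:qdrift} with \(q=q_0\), write \(\Phi=A x+(1-A)b+H(y)+\sigma\xi\). Using Minkowski in \(L^{q_0}(\theta)\),
\[
\bigl(\E|\Phi|^{q_0}\bigr)^{1/q_0}\le (\E A^{q_0})^{1/q_0}|x|+C(|b|+1).
\]
Since \(\E A^{q_0}<1\), the elementary inequality \((s+t)^{q}\le(1+\delta)^{q-1}s^q+C_\delta t^q\) with \(\delta\) small gives \(a_x<1\), \(a_y=0\), and \(c_c=0\). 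Here \(H\) is bounded, \(\E|\xi|^{q_0}<\infty\), and \(\E|1-A|^{q_0}<\infty\) follows from \(\E A^{q_0}<1\). Theorem~\ref{thm:main-w1} then gives existence and uniqueness of \(F_\infty\).

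\emph{Structure of \(F_\infty\).} Construct the candidate directly. Let \(m\) be the opinion marginal of \(F_\infty\), and let \(Q=H(Y)+\sigma\xi\) with \(Y\sim m\) independent of \((A,\xi)\). Under \(\E A^{q_0}<1\) (hence \(\E\log A<0\)) and \(\E|Q|^{q_0}<\infty\), the SRE \(R\stackrel{d}{=}AR'+Q\) has a unique solution in \(L^{q_0}\). Concretely, \(R=\sum_n A_1\cdots A_{n}Q_{n+1}\), with the usual indexing, and this series converges in \(L^{q_0}\) because \(\|A\|_{q_0}<1\).

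Take \(\tilde F\) to be the law of \((B+R,B)\), where \(R\perp B\) and \(B\sim\rho\). The partner's opinion under \(\tilde F\) has law \(\rho*\Law(R)\). To check \(T\tilde F=\tilde F\), note that \(\Phi-b=A(x-b)+H(y)+\sigma\xi\), so the new deviation is \(AR+H(Y)+\sigma\xi\), independent of \(b\), provided the partner's opinion \(Y\) has law equal to \(\tilde F\)'s opinion marginal. This is circular, so close the loop by a fixed-point argument. The map \(\nu\mapsto \Law(R_\nu)\), with \(R_\nu\) solving the SRE using \(Y\sim\rho*\nu\), is a contraction in \(W_1\), with constant \(\Lip(H)/(1-\E A)<1\). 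Its fixed point gives a stationary law of the stated form, and uniqueness in Theorem~\ref{thm:main-w1}(i) identifies it with \(F_\infty\).

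\emph{Tails.} For (i), iterate the series. Since \(|H|\le\|H\|_\infty\) and \(A\le a\),
\[
|R|\le \frac{\|H\|_\infty}{1-a}+\sigma\Bigl|\sum_n \Pi_n\xi_{n+1}\Bigr|.
\]
The sub-Gaussian tail then follows from a Laplace-transform bound. When \(A\) and \(\xi\) are dependent, use Hölder, or a moment-growth characterisation \(\|\cdot\|_p\lesssim\sqrt p\), together with Minkowski and \(\sum a^n<\infty\). Independence of \(B\) and \(R\) then transfers the tail to \(X=B+R\).

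For (ii), apply Goldie's implicit renewal theorem (Kesten--Goldie) in the form of \citet{Buraczewski2016}, which allows \(A\ge0\) with an atom at zero, via the conditional non-arithmetic condition. The required inputs are \(\E|Q|^\alpha<\infty\), which follows from \(H\) bounded and \(\E|\xi|^{\alpha+\eta}<\infty\); \(\E A^\alpha=1\); and \(0<\E A^\alpha\log A<\infty\). The non-degeneracy condition \(\Pp(Ar+Q=r)<1\) yields \(C_++C_->0\). Finally, \(\E A^2<1\) together with log-convexity of \(s\mapsto\E A^s\), which equals \(1\) at \(0\) and at \(\alpha\), forces \(\alpha>2\), so \(R\) has finite variance. \(X=B+R\) then has finite variance whenever \(B\) does, by independence.

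\emph{Main obstacle.} I expect the hardest step to be the structural one: showing rigorously that \(F_\infty\) has the independent-deviation form. This requires the fixed-point/uniqueness argument above and careful checking that \(\tilde F\in\mathcal P_{\rho,q_0}(E)\). By comparison, the tail statements are essentially citations once \eqref{eq:R-sre} holds with \(Q\) independent of \((A,R')\) only through \(\xi\).
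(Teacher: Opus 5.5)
Your proposal is correct. It matches the paper in verifying Assumptions~\ref{ass:inv}, \ref{ass:qdrift} and \ref{ass:w1contr}, and in applying Goldie's implicit renewal theorem. The structural step, which establishes independence of \(R\) and \(B\) and the equation \eqref{eq:R-sre}, goes by a genuinely different route.

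The paper works directly with the \(F_\infty\) supplied by Theorem~\ref{thm:main-w1} and freezes the partner law at \(m_\infty\). The one-step map on deviations, \(R\mapsto AR+H(Y)+\sigma\xi\), then does not depend on the anchor. Disintegrating \(TF_\infty=F_\infty\) over \(B=b\) shows that every conditional law \(\nu_b\) is invariant for one and the same kernel; each \(\nu_b\) has finite first moment for \(\rho\)-a.e.\ \(b\). That kernel contracts \(W_1\) on \(\R\) by \(\E A<1\), so all \(\nu_b\) coincide. The circularity you worry about therefore never arises.

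You instead construct a candidate \(\Law(B+R,B)\) with \(R\perp B\) through a second Banach fixed point on the deviation law. Its contraction constant is \(\Lip(H)/(1-\E A)\), which is below one exactly when \(\E A+\Lip(H)<1\). You then identify the candidate with \(F_\infty\) via the uniqueness in Theorem~\ref{thm:main-w1}(i). This checks out. Boundedness of \(H\) gives \(\E|Q|^{q_0}<\infty\) whatever the partner law, so the series solution lies in \(L^{q_0}\) and the candidate belongs to \(\mathcal P_{\rho,q_0}(E)\).

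Your route gives the explicit description \(m_\infty=\rho*\Law(R)\) and avoids the almost-everywhere disintegration, at the price of an extra contraction estimate. The paper's route is shorter and uses \(\Lip(H)\) only through the main theorem.

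The remaining differences are cosmetic. Your \(L^p\)-growth bound in (i), using \(A_1\cdots A_n\le a^n\), replaces the paper's Jensen inequality with weights \((1-a)a^{n-1}\). Your log-convexity argument for \(\alpha>2\) replaces \(1=\E A^\alpha\le(\E A^2)^{\alpha/2}\). Finally, independence of \(B\) and \(R\) is not needed to pass sub-Gaussian tails to \(X\), since \(|X|\le|B|+|R|\) already suffices.
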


The proof is given in Section \ref{subsec:proof-tail}.
In part~(ii), for every \(p>0\), the positive two-sided tail constant
implies that \(\E|R|^p<\infty\) exactly when \(p<\alpha\). The nonlinear law can therefore converge exponentially fast to
stationarity even when the stationary deviations have power-law tails. The example in Section
\ref{sec:numerics} has \(\alpha>2\) and satisfies the assumptions of
Corollary \ref{cor:w2}. For that example, the continuous component of
the distribution of \(A\) gives non-arithmeticity, and the nonzero
Gaussian noise ensures non-degeneracy.

\paragraph*{Stability of a neutral opinion}
We next consider biased assimilation on \([0,1]\): an agent gives
more weight to evidence that agrees with its current opinion
\citep{Dandekar2013}. Let \(L(r)=(1+e^{-r})^{-1}\), take
\(U=(U_1,U_2)\) with independent standard normal components, and
set \(\iota(U_1,U_2)=(U_2,U_1)\). For
\(\eta\in[0,1]\) and \(\sigma\ge0\), put
\[
 e(y,U)=(1-\eta)L(\sigma U_1)+\eta y.
\]
This combines an external input with the partner's opinion.
For \(\gamma>1\) and \(s>0\), define
\begin{equation}\label{eq:Agamma}
 A_\gamma(x,e)
 =\frac{s x+x^\gamma e}
        {s+x^\gamma e+(1-x)^\gamma(1-e)},
 \qquad 0\le x,e\le1.
\end{equation}
The map \(A_\gamma\) is the biased-assimilation update in~\citet[Eq.~(4)]{Dandekar2013}, written in our notation.
We add a pull toward the anchor:
for \(\beta\in(0,1]\), set
\begin{equation}\label{eq:ba}
 \Phi_\gamma(x,y;b,c,U)
 =(1-\beta)A_\gamma(x,e(y,U))+\beta b,
 \qquad 0\le b,c,x,y\le1.
\end{equation}
The parameter \(\beta\) controls the weight of the anchor. The map
\(A_\gamma\) takes values in \([0,1]\) and is continuously
differentiable on the closed square. Hence the update keeps opinions
in \([0,1]\).

To examine neutrality, hold both the anchor and the incoming evidence
at \(1/2\). The resulting scalar update shows whether one interaction
moves a nearby opinion closer to or farther from \(1/2\).

\begin{theorem}[Stability of the neutral opinion]\label{thm:modal}
Fix \(s>0\), \(\gamma>1\), and \(\beta\in(0,1]\), and define
\[
 G_\gamma(x)=(1-\beta)A_\gamma(x,1/2)+\frac{\beta}{2}.
\]
Then \(G_\gamma(1/2)=1/2\), and
\begin{equation}\label{eq:Lambdagamma}
 G_\gamma'(1/2)
 =(1-\beta)\frac{s+\gamma2^{-\gamma}}{s+2^{-\gamma}}
 =:\Lambda(\gamma).
\end{equation}
If \(\Lambda(\gamma)<1\), the neutral fixed point is locally
attracting. If \(\Lambda(\gamma)>1\), it is locally repelling, and
\(G_\gamma\) has at least two further fixed points
\[
 0<x_-(\gamma)<\frac12<x_+(\gamma)<1,
 \qquad x_-(\gamma)=1-x_+(\gamma).
\]
\end{theorem}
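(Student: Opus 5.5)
Throughout, write \(A(x)=A_\gamma(x,1/2)\). With \(e=1/2\),
\[
 A(x)=\frac{sx+\tfrac12x^\gamma}{s+\tfrac12\bigl(x^\gamma+(1-x)^\gamma\bigr)}.
\]
The plan has three steps: check the fixed point and the symmetry, differentiate at \(1/2\), and then obtain the outer fixed points from an intermediate value argument.

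\emph{Fixed point and symmetry.} At \(x=1/2\) the numerator is \(s/2+2^{-\gamma-1}\) and the denominator is \(s+2^{-\gamma}\). Hence \(A(1/2)=1/2\) and \(G_\gamma(1/2)=(1-\beta)/2+\beta/2=1/2\).

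For symmetry, write \(A=N/D\) with \(D(x)=s+\tfrac12(x^\gamma+(1-x)^\gamma)\), so that \(D(1-x)=D(x)\). Then
\[
 N(x)+N(1-x)=s+\tfrac12\bigl(x^\gamma+(1-x)^\gamma\bigr)=D(x),
\]
which gives \(A(1-x)=1-A(x)\). It follows that \(G_\gamma(1-x)=1-G_\gamma(x)\). I will use this symmetry to produce \(x_-\) from \(x_+\).

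\emph{Derivative.} By the quotient rule, \(A'(1/2)=(N'D-ND')/D^2\) evaluated at \(1/2\). Since \(D'(1/2)=\tfrac{\gamma}{2}\bigl(2^{1-\gamma}-2^{1-\gamma}\bigr)=0\), this reduces to \(A'(1/2)=N'(1/2)/D(1/2)\). We have \(N'(1/2)=s+\tfrac{\gamma}{2}2^{1-\gamma}=s+\gamma2^{-\gamma}\), and therefore
\[
 G_\gamma'(1/2)=(1-\beta)\,\frac{s+\gamma2^{-\gamma}}{s+2^{-\gamma}}=\Lambda(\gamma).
\]
The map \(G_\gamma\) is \(C^1\) on \([0,1]\), because \(A_\gamma\) is continuously differentiable there. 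Local attraction when \(\Lambda<1\) and local repulsion when \(\Lambda>1\) then follow from the mean value theorem: near \(1/2\), \(|G_\gamma(x)-1/2|\le c|x-1/2|\) with \(c<1\) in the first case, and \(\ge c|x-1/2|\) with \(c>1\) in the second.

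\emph{Further fixed points.} Let \(h(x)=G_\gamma(x)-x\). If \(\Lambda>1\), then \(h(1/2)=0\) and \(h'(1/2)=\Lambda-1>0\), so \(h(x)>0\) for \(x\) slightly above \(1/2\). At the right endpoint, \(A(1)=(s+1/2)/(s+1/2)=1\), so \(G_\gamma(1)=1-\beta/2\) and \(h(1)=-\beta/2<0\) since \(\beta>0\).

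The intermediate value theorem then gives a zero \(x_+\) of \(h\) in \((1/2,1)\). Strictness holds at both ends: \(x_+\neq1\) because \(h(1)\neq0\), and \(x_+>1/2\) because \(h>0\) just to the right of \(1/2\). Finally, set \(x_-=1-x_+\). By the symmetry \(G_\gamma(1-x)=1-G_\gamma(x)\), this is also a fixed point, and it lies in \((0,1/2)\).

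\emph{Main obstacle.} The argument is elementary, and the only place a slip is likely is the derivative computation. The key point there is that \(D'(1/2)=0\), which is what removes the quotient-rule correction term. I would double-check this, together with the endpoint value \(h(1)=-\beta/2\), and note that the strict inequality \(\beta>0\) is essential for \(x_+<1\).
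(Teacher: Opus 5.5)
Your proof is correct and follows essentially the same route as the paper's: the reflection symmetry \(G_\gamma(1-x)=1-G_\gamma(x)\), differentiation at \(1/2\) (your observation that \(D'(1/2)=0\) is the detail the paper leaves implicit), the derivative criterion, and the intermediate value theorem on \(G_\gamma(x)-x\) between a point just right of \(1/2\) and \(x=1\), where it equals \(-\beta/2\), followed by reflection to get \(x_-\). For the mean-value bound in the attracting case, note explicitly, as the paper does, that \(\Lambda(\gamma)\ge0\), so that \(\Lambda<1\) indeed gives \(|G_\gamma'|<1\) near \(1/2\).
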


The proof is given in Section \ref{subsec:proof-modal}.
This theorem concerns the update with neutral inputs. It does not
determine the number of modes of the stationary opinion law. For each
fixed anchor law \(\rho\) on \([0,1]\), either contraction assumption
\ref{ass:w1contr} or \ref{ass:square} gives a unique stationary
nonlinear law on \([0,1]^2\). The simulations in Section
\ref{sec:numerics} show two peaks in the smoothed empirical opinion
profile and include parameters outside these contraction assumptions.

\section{Numerics and phase diagrams}\label{sec:numerics}

We now use the two rules from Section \ref{subsec:stationary-mechanisms}
to examine how repeated interactions affect the shape of the opinion
distribution. We first follow the development of two peaks under biased
assimilation, then compare tails with and without occasional
overreaction. Parameter diagrams complement the individual simulations,
as in the study of smooth bounded-confidence models by
\citet{BrooksChodrowPorter2023}.

All simulations use the embedded jump chain of the finite system.
At each step, we choose two distinct agents uniformly and update both
opinions according to \eqref{eq:finite-update}, using their values
before the interaction. The two agents receive independent random
inputs, sampled afresh at each step. Anchors are sampled independently
at the start and remain fixed, and we initialize every opinion at its
anchor. After \(m\) pair updates, we report the exposure
\[
 c=\frac{2m}{N},
\]
the average number of updates per agent. The code accompanying the
figures specifies all random seeds.

\subsection{Development of two peaks}

For the biased-assimilation rule \eqref{eq:ba}, we take
\[
 s=0.05,\qquad \sigma=0.3,\qquad \eta=0.05,
 \qquad B\sim {\rm Beta}(5,5).
\]
Thus the anchor and initial opinion distributions each have a single
peak at \(1/2\). In Figure \ref{fig:modality}, we fix \(\beta=0.13\)
and vary the bias parameter \(\gamma\). We also include \(\gamma=1\),
for which the same update formula is well defined.

We display opinions in the centered coordinate \(m_x=2x-1\).
For each profile, we form a density histogram with \(401\) equal
bins on \([-1,1]\) and smooth it with a discrete Gaussian filter
of standard deviation \(h=0.035\), extending the histogram by zero
outside this interval. A detected peak must have prominence at least
\(0.08\) times the maximum smoothed density. We require a minimum
separation of \(0.07\), rounded up to a whole number of bins, and
discard peaks outside \((-0.96,0.96)\). These conventions are used
for every profile and peak count below; the count refers to the
smoothed sample at the stated exposure.

The left panel of Figure \ref{fig:modality} follows \(N=80{,}000\)
agents with \(\gamma=1.2\) at exposures \(c=0,8,25,55,100\).
The initial central peak gradually gives way to two side peaks.
The right panel compares profiles at \(c=100\), using
\(N=40{,}000\) and
\(\gamma=1.00,1.08,1.16,1.24,1.36\). The first two profiles have
one detected peak; the remaining three have two, on opposite sides
of neutrality.

\begin{figure}[tbp]
\centering
\includegraphics[width=0.95\linewidth]{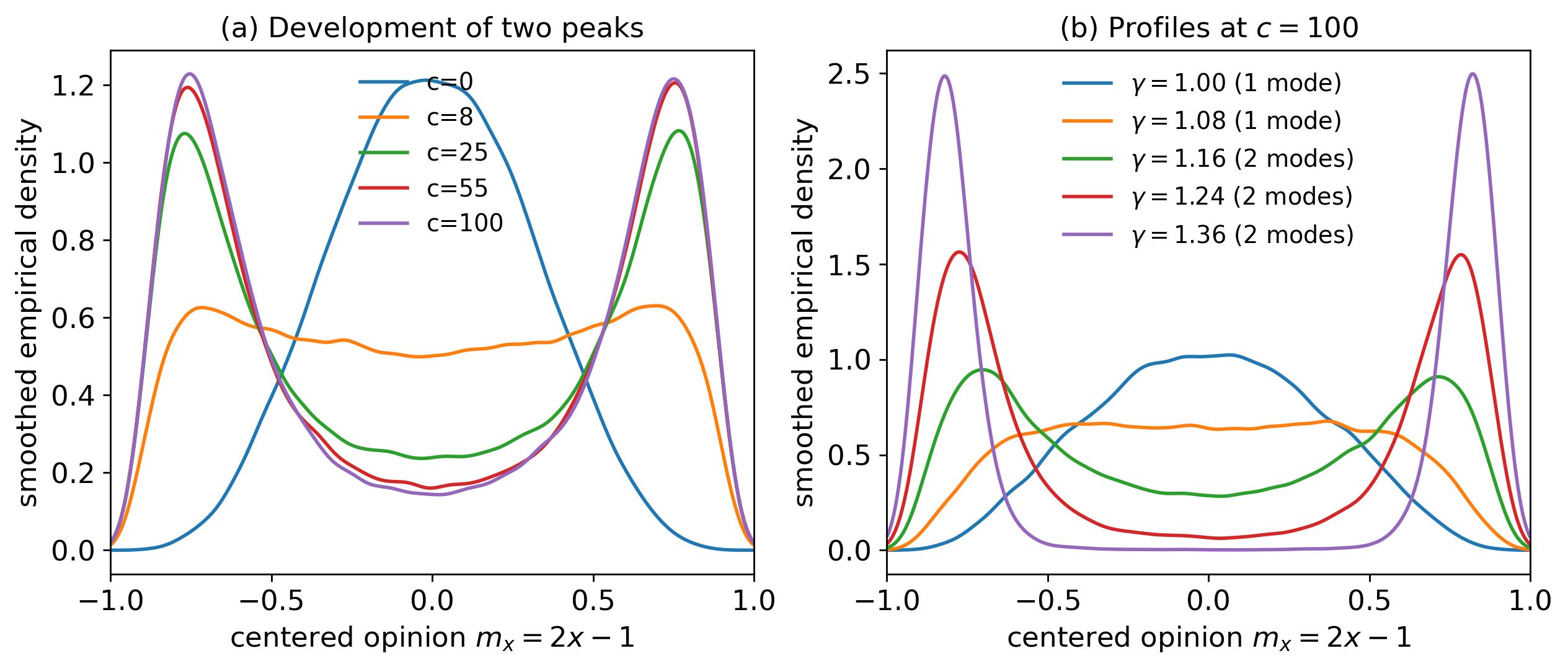}
\caption{Development of two peaks under biased assimilation, with
\(\beta=0.13\). Left: smoothed empirical densities for
\(\gamma=1.2\) and \(N=80{,}000\), at the exposures shown in the
legend. Right: profiles at \(c=100\) for \(N=40{,}000\), as
\(\gamma\) varies. Peak counts use the convention described in the
text. In both panels, \(X_0=B\sim {\rm Beta}(5,5)\).}
\label{fig:modality}
\end{figure}

For comparison, Theorem \ref{thm:modal} gives the curve
\begin{equation}\label{eq:beta-critical-numerics}
 \beta_c(\gamma)
 =1-\frac{s+2^{-\gamma}}{s+\gamma2^{-\gamma}},
\end{equation}
at which \(\Lambda(\gamma)=1\) in \eqref{eq:Lambdagamma}.
When both the anchor and incoming evidence are held at \(1/2\),
the neutral fixed point is locally repelling for
\(\beta<\beta_c(\gamma)\) and locally attracting for
\(\beta>\beta_c(\gamma)\).

Figure \ref{fig:modality-phase} compares this curve with the empirical
peak counts on a grid of \(12\) equally spaced values of
\(\gamma\in[1,1.55]\) and \(10\) equally spaced values of
\(\beta\in[0.05,0.30]\). At each grid point, we run five
independent simulations with \(N=6{,}000\) to exposure \(c=70\),
sampling new anchors for each run. Color records the fraction of runs
with exactly two detected peaks.

The region with two peaks extends above the neutral-stability curve.
The profile at \(\gamma=1.16\), \(\beta=0.13\) in Figure
\ref{fig:modality} gives a concrete example: it has two detected
peaks, although \mbox{\(\Lambda(1.16)\approx0.9952<1\)}.
Thus instability of the update with neutral inputs is not necessary
for the two-peak profiles observed here. The curve concerns one
opinion with its inputs held fixed, whereas the simulated population
also has varying anchors, random evidence, and interactions between
agents.\par

\begin{figure}[tbp]
\centering
\includegraphics[width=0.64\linewidth]{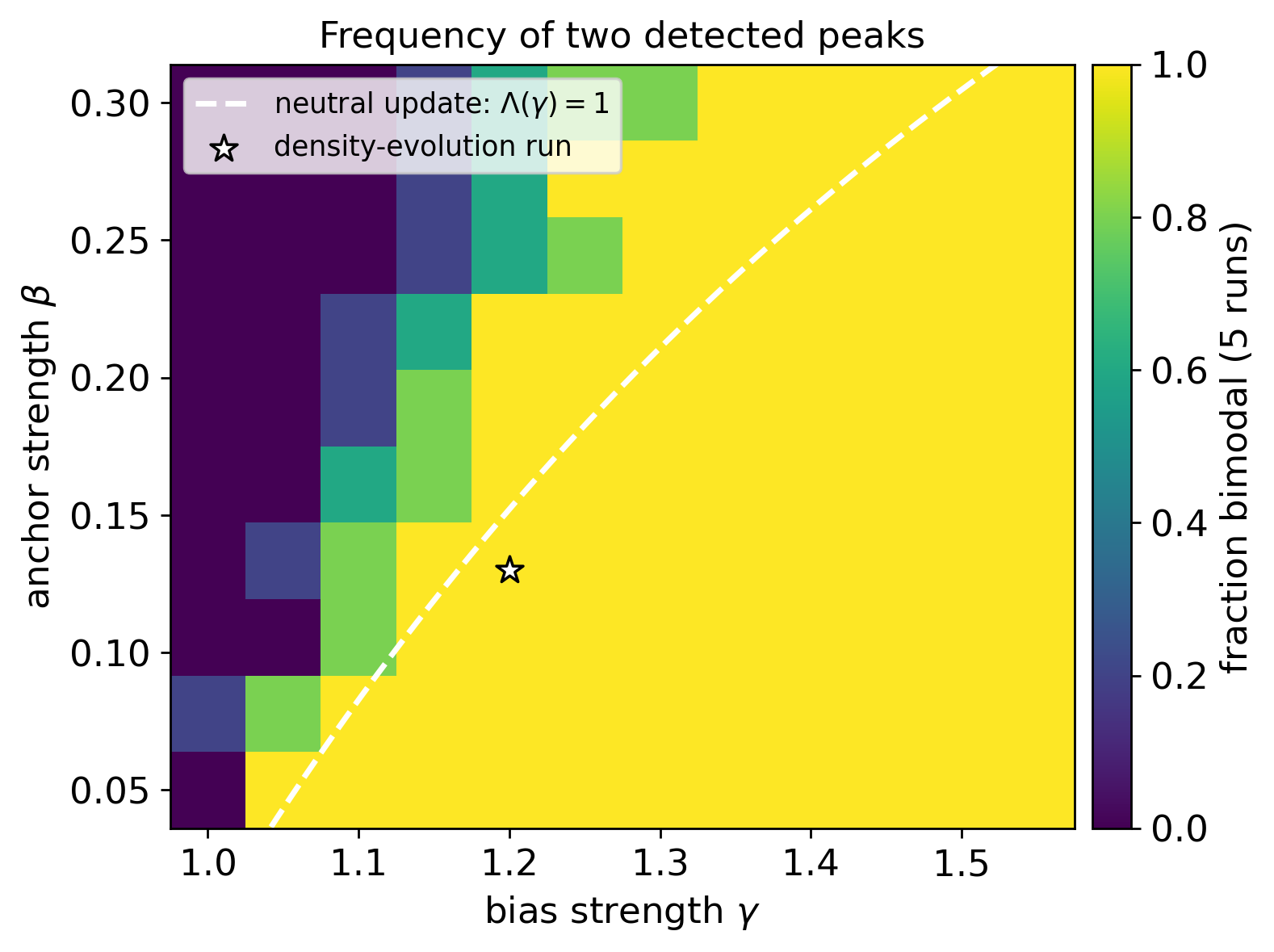}
\caption{Empirical peak counts as bias and anchoring vary. Color gives
the fraction of five independent runs with exactly two detected peaks
at \(c=70\), for \(N=6{,}000\). The dashed curve is
\(\beta=\beta_c(\gamma)\) from
\eqref{eq:beta-critical-numerics}; the update with neutral inputs is
locally attracting above it and repelling below it. The star marks
\((\gamma,\beta)=(1.2,0.13)\), used in the left panel of
Figure \ref{fig:modality}.}
\label{fig:modality-phase}
\end{figure}

\subsection{Stationary tails}

We next consider the anchored rule \eqref{eq:rsfj}, with
\[
 H(y)=0.05\tanh(y),\qquad \sigma=0.1,
 \qquad B\sim N(0,0.2^2),\qquad \xi\sim N(0,1),
\]
and
\[
 A=\begin{cases}
 0.2, & \text{with probability }0.9,\\
 {\rm Unif}[2.5,3.0], & \text{with probability }0.1.
 \end{cases}
\]
Here \(A\) and \(\xi\) are independent. To isolate the effect
of the occasional values \(A>1\), we compare this model with
\(A\equiv0.2\), keeping the other parameters unchanged.

The example with overreaction satisfies both contraction assumptions.
Indeed,
\[
 \E A=0.455,\qquad \E A^2=0.794333\ldots,
 \qquad \E A+\Lip(H)=0.505<1.
\]
For contraction in mean square, put
\(u=(\E A^2)^{1/2}\) and \(L=\Lip(H)=0.05\).
Minkowski's inequality and the weighted Cauchy--Schwarz inequality give
\[
 \E\left|\Phi(x,y;b,c,U)-\Phi(x',y';b,c,U)\right|^2
 \le (u+L)\left(u|x-x'|^2+L|y-y'|^2\right).
\]
Thus Assumption \ref{ass:square} holds with
\(\kappa^2=(u+L)^2=0.885959\ldots<1\).
Also, \(\E A^{2.2}=0.955290\ldots<1\); boundedness of \(H\)
and the Gaussian moments of the anchors and noise then give
Assumption \ref{ass:qdrift} with \(q=2.2\).
Consequently, Corollary \ref{cor:w2} applies to this example.

Theorem \ref{thm:tail}(ii) gives a power-law tail for the stationary
deviation \(R=X-B\), with exponent
\[
 \alpha=2.248206\ldots,
 \qquad \E A^\alpha=1.
\]
The stationary deviation therefore has finite variance.
Theorem \ref{thm:tail}(i) gives
sub-Gaussian tails when \(A\equiv0.2\). This comparison shows that
even within the assumptions of the Wasserstein--2 result, the
stationary distribution can have quite different tail behavior.

For each model, we simulate \(N=90{,}000\) agents to exposure
\(c=260\). The left panel of Figure \ref{fig:tail} plots the
empirical survival function of \(|X-B|\) from one terminal sample,
without smoothing. With rare overreaction, the curve is roughly
parallel to the reference line of slope \(-\alpha\) over much of
the displayed tail range. With \(A\equiv0.2\), it falls much more
rapidly. The reference slope comes from the moment equation above;
it is not fitted to the sample.

\begin{figure}[tbp]
\centering
\includegraphics[width=0.95\linewidth]{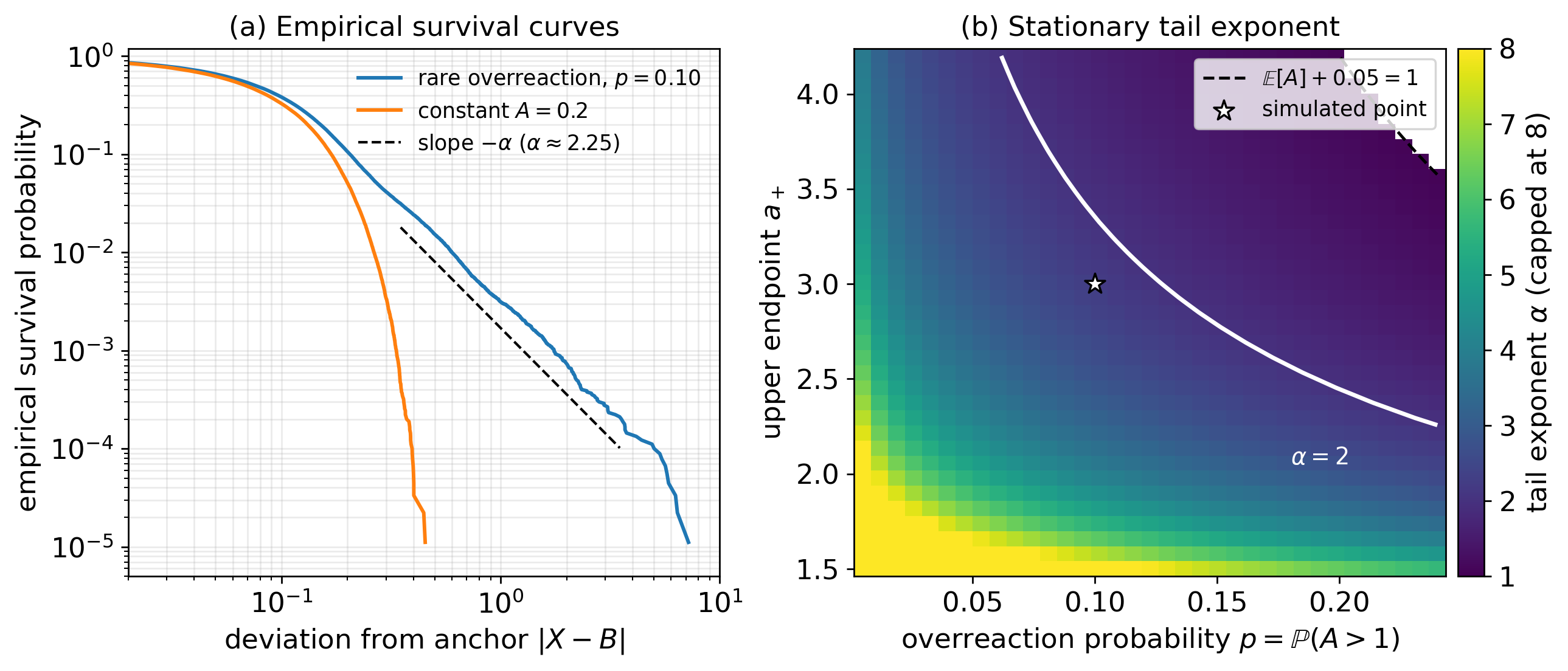}
\caption{The effect of occasional overreaction on the tails. Left:
empirical survival curves of \(|X-B|\) at \(c=260\), for
\(N=90{,}000\). The dashed reference line has slope
\(-\alpha\approx-2.2482\), as predicted by
Theorem \ref{thm:tail}(ii). Right: the exponent computed from
\(\E A^\alpha=1\), capped at \(8\). The white contour is
\(\alpha=2\), and the dashed black curve is
\(\E A+0.05=1\). The star marks
\((p,a_+)=(0.1,3.0)\), used in the simulation with overreaction.}
\label{fig:tail}
\end{figure}

For the right panel, we keep the other parameters as above and compute
the theoretical exponent for the family
\[
 A=\begin{cases}
 0.2, & \text{with probability }1-p,\\
 {\rm Unif}[a_+-0.5,a_+], & \text{with probability }p.
 \end{cases}
\]
We use \(35\) equally spaced values in each of
\(p\in[0.005,0.24]\) and \(a_+\in[1.5,4.2]\). Throughout this
range, \(p=\Pp(A>1)\). Color gives the positive solution of
\(\E A^\alpha=1\), capped at \(8\), on cells satisfying
\(\E A+0.05<1\); this condition also implies
\(\E\log A<0\). Cells that fail this sufficient contraction
condition are left white.
Every colored cell corresponds to a power-law stationary tail.
The white contour \(\alpha=2\) separates finite variance
\((\alpha>2)\) from infinite variance \((\alpha\le2)\).

\section{Proof strategy}\label{sec:strategy}

We outline the proof of the uniform-in-time approximation. The argument
starts on finite intervals, then uses convergence to stationarity to
control later times. An essential step is to compare the stationary
laws of the finite system and its nonlinear limit.

\paragraph*{Approximation on finite intervals}
Fix \(k\) agents and a time horizon \(T\). We trace backward the
interactions that can affect their trajectories on \([0,T]\).
Each partner encountered in this exploration brings its own earlier
interactions. We compare the resulting graph with independent trees
that construct \(k\) nonlinear trajectories. The two constructions
can be coupled to agree until a proposed partner label has already
appeared in the exploration. A branching-process bound controls the
first two moments of the number of labels independently of \(N\).
For fixed \(k\) and \(T\), the probability of a repeated label is
therefore bounded by a constant times \(1/N\). This gives the total-variation estimate
in Theorem \ref{thm:finite-time-main}, using only measurability of the
interaction rule.

\paragraph*{Moment bounds and contraction}
Assumption \ref{ass:qdrift} keeps the opinion moments of order
\(q>1\) bounded uniformly in time and in \(N\). Together with the
assumed anchor moment, this converts the total-variation estimate into
a Wasserstein--1 estimate for the joint laws at a common time
(Lemma \ref{lem:interp}). Assumption \ref{ass:w1contr} then gives
contraction of the nonlinear evolution in \(\W_1^\rho\).
For the finite system, we couple two populations with the same anchors,
pair clocks, and random inputs. The expected average absolute
difference between corresponding opinions contracts at a rate
independent of \(N\). These estimates
give exponential convergence of both systems to their respective
stationary laws.

\paragraph*{Comparing stationary laws}
Let \(F_\infty^N\) be the finite stationary law with anchor marginal
\(\rho^{\otimes N}\). Using the coupling just described, we compare
a system with this law to a second finite system started from
\(F_\infty^{\otimes N}\). Contraction and exchangeability bring
the two \(k\)-agent marginals close. Meanwhile, the graphical
estimate compares the joint law of
\(k\) agents in the second system with \(F_\infty^{\otimes k}\),
since its nonlinear law remains stationary. We choose the observation
time
\[
 s_N=a\log N,\qquad 0<a<\frac14.
\]
Then \(s_N\to\infty\) and \(e^{4s_N}/N\to0\), so both errors
vanish. This proves
that a fixed number of agents become asymptotically independent under
\(F_\infty^N\), with common limiting law \(F_\infty\)
(Lemma \ref{lem:stationary-chaos}).

\paragraph*{Uniformity in time}
We now fix \(T\), independently of \(N\). The finite-time estimate
controls the error on \([0,T]\). For \(t\ge T\), we compare the
finite and nonlinear laws through their stationary laws: the two
errors from convergence to stationarity are small when \(T\) is
large, and the difference between the stationary marginals tends to
zero as \(N\to\infty\). Taking \(N\to\infty\) first and then
\(T\to\infty\) proves Theorem \ref{thm:main-w1}(ii). The empirical
conclusions follow from a second-moment calculation using the case
\(k=2\). For Corollary
\ref{cor:w2}, the same argument uses contraction in mean square;
on an unbounded domain, the conversion from total variation uses
the assumed moment of order greater than two.

\section{Proofs}\label{sec:proofs}

We now give the proofs of the results stated in Section \ref{sec:main},
beginning with the finite-time approximation. Some supporting proofs
are given in Appendix \ref{app:technical}.

\subsection{Finite-time propagation of chaos}\label{subsec:proof-graphical}

We begin with the well-posedness result needed to prove
Theorem \ref{thm:finite-time-main}.

\begin{proposition}[Well-posedness of the nonlinear equation]
\label{prop:wellposed}
Assume that \(\Phi\) is
\(\mathcal B(\R^4)\otimes\mathcal U\)-measurable.
For every \(f_0\in\mathcal P(E)\), equation \eqref{eq:nl} has a
unique solution \((f_t)_{t\ge0}\) continuous in total variation.
The nonlinear jump process described in Section
\ref{subsec:finite-horizon-main}, started from \(f_0\), is unique in
law and has time-\(t\) marginal \(f_t\). Its anchor marginal is
constant in time.
\end{proposition}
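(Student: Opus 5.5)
Existence and uniqueness will come from treating the mild form \eqref{eq:mild} as a fixed-point problem in the Banach space of finite signed measures on \(E\) with the total-variation norm \eqref{eq:tv-convention}. The quadratic operator \(T\) from \eqref{eq:Tdef} extends to the symmetric bilinear map
\[
 \langle\varphi,T(\mu,\nu)\rangle=\int\varphi(\Phi(x,y;b,c,u),b)\,\theta(du)\,\mu(dx,db)\,\nu(dy,dc),
\]
which makes sense by measurability of \(\Phi\) alone. For probability laws \(\mu,\nu\), we have \(T\mu-T\nu=T(\mu-\nu,\mu)+T(\nu,\mu-\nu)\), so \(\|T\mu-T\nu\|_{\TV}\le2\|\mu-\nu\|_{\TV}\). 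Thus \(T\) is globally Lipschitz on \(\mathcal P(E)\) with constant \(2\), and no continuity of \(\Phi\) is needed.

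\textbf{Existence and uniqueness.} On \(C([0,S],\mathcal P(E))\) with the sup-TV norm, the map \(\Psi(f)_t=e^{-2t}f_0+2\int_0^te^{-2(t-s)}T(f_s)\,ds\) preserves probability measures, since the weights \(e^{-2t}\) and \(2\int_0^te^{-2(t-s)}ds\) sum to \(1\). A Gronwall estimate, or iterating \(\Psi\) and bounding by \((4S)^n/n!\), gives a unique fixed point. Taking \(S\) arbitrary and gluing yields a global solution continuous in TV. Because \(s\mapsto T(f_s)\) is TV-continuous, the mild form is equivalent to \eqref{eq:nl} tested against \(\varphi\in B_b(E)\): differentiating \(e^{2t}\langle\varphi,f_t\rangle\) gives one direction, and the variation-of-constants formula gives the other. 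Uniqueness of solutions of \eqref{eq:nl} then follows from uniqueness for \eqref{eq:mild}.

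\textbf{The nonlinear process.} Given the TV-continuous flow \((f_t)\), I construct the process explicitly. Start from \(Z_0\sim f_0\) and a Poisson clock of rate \(2\). At each jump time \(\tau\), sample \((Y,C)\sim f_\tau\) and \(U\sim\theta\), independently of everything else, and apply \(\Phi\). This is a time-inhomogeneous Markov jump process with bounded jump rate and kernel \(K_t(z,\cdot)\), so its law on \(D([0,T],E)\) is uniquely determined by \(f_0\) and the flow. Let \(g_t\) be its time-\(t\) marginal. Conditioning on the first jump, or equivalently applying the Kolmogorov forward equation for bounded-rate jump processes, gives
\[
 g_t=e^{-2t}f_0+2\int_0^te^{-2(t-s)}g_sK_s\,ds,
 \qquad g_sK_s=T(g_s,f_s).
\]
This is a linear equation in \(g\). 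The flow \(g=f\) solves it, and the same Gronwall argument shows the solution is unique, so \(g_t=f_t\).

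\textbf{Uniqueness in law of the nonlinear process.} Any process of the described type, whose partners are drawn from its own marginals \(g_t\), has marginals satisfying the nonlinear equation \eqref{eq:mild}. Hence \(g_t=f_t\), and the path law is then fixed by the linear construction above. The anchor coordinate never changes at jumps, so its marginal is constant in time; this can also be seen from \eqref{eq:nl} with \(\varphi(x,b)=h(b)\), since \(\langle h,Tf\rangle=\langle h,f\rangle\).

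\textbf{Main difficulty.} I expect the main technical point to be the measurability and justification of the forward equation, specifically that \(t\mapsto K_t(z,A)\) is jointly measurable. This follows from Fubini and the TV-continuity of \(f_t\).
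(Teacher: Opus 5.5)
Your proposal is correct and follows essentially the same route as the paper: the bound $\|T\mu-T\nu\|_{\TV}\le2\|\mu-\nu\|_{\TV}$, a Picard fixed point for the mild form (the paper contracts on intervals of length $h<1/4$ and glues, while you iterate on $[0,S]$), equivalence of the weak and mild formulations, the explicit kernel $K_t$, and Gronwall applied to the linear equation $g_t=e^{-2t}f_0+2\int_0^te^{-2(t-s)}g_sK_s\,ds$ using $f_sK_s=Tf_s$. There are two small slips, neither affecting the argument: that equation comes from conditioning on the \emph{last} ring before $t$, as in the paper, whereas conditioning on the first jump gives a backward-type decomposition through the transition kernels (your alternative via the forward equation is fine); and $T(\mu,\nu)$ is not symmetric, since the first argument carries the updated agent's anchor, although your decomposition $T(\mu-\nu,\mu)+T(\nu,\mu-\nu)$ never uses symmetry.
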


The proof is given in Appendix \ref{subsec:technical-wellposed}.
Fix \(N\ge2\), \(1\le k\le N\), distinct labels
\(i_1,\ldots,i_k\), and \(T>0\). Write
\(Q=Q_{f_0,T}\) for the nonlinear path law and set
\begin{equation}\label{eq:epsNk}
 \eps_{N,k}(T):=\frac{k(k+1)(e^{4T}-1)}{N-1}.
\end{equation}

\paragraph*{The paths needed for reconstruction}
To recover the trajectory of an agent, we need the opinion of each
partner just before their interaction. Finding that opinion requires
tracing the partner's earlier interactions as well. We collect this
information by reading the pair clocks backward from \(T\).

\begin{definition}[Backward cone]\label{def:backward-cone}
For a root label \(i\), let \(\mathcal V_0^i=\{i\}\) and use
backward time \(s=T-t\). At a ring at physical time \(T-s\),
ignore the event if neither endpoint is in \(\mathcal V_{s-}^i\).
Otherwise, record the time, pair, and oriented marks, and add any
endpoint not already in \(\mathcal V_{s-}^i\). Each added label
remains active for the rest of the backward exploration.
Let \(\mathcal G_i\) be the resulting marked graph, with the
vertical segment of each label running from physical time zero to
the physical time of its first encounter; the root segment runs to
\(T\).
\end{definition}

A partner first encountered at time \(t\) is needed only up to its
state at \(t-\). Its path after that interaction is not part of
the reconstruction through this encounter. If both endpoints are
already active when we encounter a ring, both of their paths must
continue through it.

\begin{lemma}[Backward-cone reconstruction]\label{lem:cone-reconstruction}
The path \(Z_i^N\) on \([0,T]\) is a measurable function of
\(\mathcal G_i\) and the initial states
\(\{Z_a^N(0):a\in\mathcal V_T^i\}\).
For several roots, their joint paths are determined by the union
of their marked graphs and the initial states on the union of their
label sets.
\end{lemma}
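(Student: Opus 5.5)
We prove the lemma by forward reconstruction along the recorded rings, using induction over the finitely many events of \(\mathcal G_i\) in increasing physical time. The backward exploration on \([0,T]\) visits only finitely many rings almost surely, since the total ring rate is \(N\). Hence \(\mathcal G_i\) records a finite list of events \(t_1<\dots<t_m\). Each event carries its pair \(\{a,b\}\) and oriented marks \((u,\iota u)\). The label set \(\mathcal V_T^i\) is finite. For each \(a\in\mathcal V_T^i\), let \(\tau_a\) be the physical time of its first encounter, with \(\tau_i=T\) for the root. The recorded vertical segment of \(a\) is \([0,\tau_a]\).

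The key structural claim is a closure property of the exploration. Take any label \(a\) and any ring involving \(a\) at a physical time \(t<\tau_a\), or at \(t\le T\) if \(a=i\). Then that ring was recorded, and its other endpoint \(b\) lies in \(\mathcal V_T^i\) with \(\tau_b\ge t\). The argument uses backward time. At backward time \(T-t\), the label \(a\) is already active, because it was added at backward time \(T-\tau_a<T-t\). The ring is therefore not ignored, and it is recorded. If \(b\) was not yet active, it is added at that moment, so \(\tau_b\ge t\). If \(b\) was already active, then \(\tau_b>t\). Between recorded events, the path of \(a\) on \([0,\tau_a)\) is constant. This is because every jump of \(a\) before \(\tau_a\) occurs at a ring involving \(a\), and all such rings are recorded.

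We then define the candidate paths inductively. For every \(a\in\mathcal V_T^i\), set \(z_a(0)=Z_a^N(0)\). Process the events in increasing order. At an event \(t_r\) with pair \(\{a,b\}\), \(a<b\), we have \(t_r\le\tau_a\) and \(t_r\le\tau_b\). The left limits \(z_a(t_r-)\) and \(z_b(t_r-)\) are therefore already defined by the constant extension from the previous event. If \(t_r<\tau_a\), we update \(z_a\) by \eqref{eq:finite-update} with mark \(u\). If \(t_r<\tau_b\), we update \(z_b\) with mark \(\iota u\). At \(t_r=\tau_b\), the partner's state \(z_b(\tau_b-)\) is used, but its path is not continued. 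This matches the remark before the lemma: the partner is needed only up to its state at \(\tau_b-\).

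We then check that the reconstruction is correct, by induction on \(r\). On \([0,\tau_a)\), with \([0,T]\) for the root, \(z_a\) coincides with \(Z_a^N\). The closure property makes the reconstruction exact: every jump of \(a\) in this window is one of the processed events, and both pre-interaction states agree with the true ones by the induction hypothesis. The map from \(\mathcal G_i\) and the initial states to \((z_i(s))_{s\le T}\) is a finite composition of the measurable \(\Phi\). On each event \(\{m=n\}\), it is measurable in the finitely many recorded times, pairs, marks and initial values. Taking the countable union over \(n\) gives measurability.

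For several roots, the same argument applies to the union graph. The union graph is the exploration started from \(\{i_1,\ldots,i_k\}\), and it satisfies the same closure property. The label set is the union of the label sets, and each vertical segment ends at the largest first-encounter time among the roots' graphs. The induction is unchanged.

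The main obstacle is stating the closure property precisely, with the right conventions at ties: the endpoint time \(\tau_b\) is included or excluded, and both endpoints may already be active. Once that is settled, the rest is bookkeeping.
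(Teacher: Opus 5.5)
Your proposal is correct and follows the same route as the paper: both reconstruct forward over the finitely many recorded events in increasing physical time, update each endpoint whose segment continues, use only the pre-interaction state of a partner whose segment ends at that event, and get measurability from finiteness of the event list and measurability of \(\Phi\). The paper's short induction leaves two points implicit, and you spell them out: your closure property (every ring involving \(a\) before \(\tau_a\) is recorded), and your identification of the union of the marked graphs with the exploration started from \(\{i_1,\ldots,i_k\}\), where each segment ends at the largest first-encounter time.
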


\begin{proof}
Read the recorded events in increasing physical time. At each event,
the two states just before the interaction have already been
reconstructed. Apply \eqref{eq:finite-update} to each endpoint whose
recorded segment continues above that event. When a partner's segment
ends there, only its state just before the interaction is used.
Induction recovers all required segments, including the full root
paths. There are finitely many recorded events, and each update is
measurable, so the reconstruction is measurable.
\end{proof}

\paragraph*{Independent comparison trees}
The comparison process uses a new abstract vertex for every partner.
Unlike a physical label, an abstract vertex is never reused.

\begin{definition}[Boltzmann forest]\label{def:boltzmann-forest}
Start with \(k\) independent roots at physical time \(T\).
In backward time, each active vertex gives birth at rate \(2\),
independently of the others, and remains active after each birth.
A child born at physical time \(t\) carries the partner's path on
\([0,t)\). Each birth receives an independent mark with law
\(\theta\), to be used by the parent. At time zero, assign
independent states with law \(f_0\) to every vertical lineage.
Reconstruct forward using \(\Phi\): at each birth time, update
the parent with the child's state just before that time.
Denote the root paths by \(\overline Z_1,\ldots,\overline Z_k\).
\end{definition}

The number of active vertices is a Yule process with birth rate
\(2n\) in state \(n\). It is nonexplosive, so this reconstruction
uses only finitely many events on \([0,T]\).

\begin{lemma}[Law of the Boltzmann forest]\label{lem:forest-law}
The joint law of the root paths is \(Q^{\otimes k}\).
\end{lemma}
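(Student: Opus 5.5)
Since the $k$ trees of the Boltzmann forest use disjoint, independent randomness (independent Yule explorations, marks, and initial states), the root paths $\overline Z_1,\ldots,\overline Z_k$ are independent. It therefore suffices to treat $k=1$ and show that the path of a single root has law $Q=Q_{f_0,T}$. By Proposition \ref{prop:wellposed}, the nonlinear process is unique in law, so it is enough to show the root path is a version of it: the root starts with law $f_0$, jumps at the times of a rate-$2$ Poisson process on $[0,T]$, and at each jump time $t$ uses a partner state with law $f_t$ and a fresh mark with law $\theta$. These partner states and marks must be independent of one another and of the root's past.

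First, the structure of one tree. The root's own birth times, read in physical time, form a Poisson process of rate $2$ on $[0,T]$. Let the children be born at physical times $t_1<\dots<t_n$. By the branching property of the backward Yule exploration, the subtree hanging from the child born at $t_j$ is distributed as a fresh Boltzmann tree rooted at physical time $t_j$ (horizon $t_j$ instead of $T$). These subtrees are independent of one another, of the root's birth times, of the root's initial state, and of the marks.

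Second, an induction on the horizon. Let $\mathcal L(t)$ be the law of the root state at time $t-$ of a tree with horizon $t$. I will show $\mathcal L(t)=f_t$. Conditioning on the first (backward) birth, or equivalently on the last forward jump before $t$, the function $g(t):=\langle\varphi,\mathcal L(t)\rangle$ satisfies the renewal identity
\[
g(t)=e^{-2t}\langle\varphi,f_0\rangle+2\int_0^t e^{-2(t-s)}\langle\varphi,T(\mathcal L(s))\rangle\,ds .
\]
This is the mild form \eqref{eq:mild}. To make the decomposition precise, I will decompose forward by the root's last jump time $s$ before $t$: the root state just before $s$ and the child's state at $s-$ are independent, and each has law $\mathcal L(s)$ by the tree recursion. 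Uniqueness of solutions of \eqref{eq:mild} in $\mathcal P(E)$, which is part of Proposition \ref{prop:wellposed}, then gives $\mathcal L(t)=f_t$. The delicate point is to avoid circularity: the recursion refers to trees of smaller horizon, so I will set it up as a fixed-point statement for the map $t\mapsto\mathcal L(t)$, which is measurable and continuous in total variation, rather than as a true induction.

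Third, the path law. Given the root's jump times, each partner state at time $t_j-$ has law $f_{t_j}$ by the previous step. The partner states are mutually independent, independent of the marks, and independent of the root's initial state and earlier updates, since distinct subtrees and the root lineage share no randomness. The root path is therefore exactly the nonlinear jump process described in Section \ref{subsec:finite-horizon-main}, and uniqueness in law identifies its law as $Q$. The main obstacle is the second step: rigorously justifying the branching (strong Markov) decomposition of the backward Yule forest into independent subtrees. I would do this by building the forest from i.i.d. Poisson clocks attached to abstract vertices, indexed by the Ulam–Harris labels, so that independence of the subtrees is built into the construction.
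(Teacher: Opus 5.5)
Your proposal is correct and follows essentially the same route as the paper. You reduce to one tree, derive the mild equation for the root's time-$t$ law by conditioning on the last root interaction (where the attached subtree is an independent horizon-$s$ tree), and use uniqueness from Proposition~\ref{prop:wellposed} among total-variation-continuous curves. You then identify the path law through the rate-$2$ transition mechanism and uniqueness in law, as the paper does. The one step you leave implicit is that the root's state at $s-$ inside a horizon-$t$ tree has law $\mathcal L(s)$. This follows from the restriction property, namely that discarding events after $s$ yields a horizon-$s$ tree, which the paper states explicitly.
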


\begin{proof}
The trees are independent, so consider one root and let \(g_t\)
be its law at time \(t\). Discarding events after \(t\) gives
the same law for the root history as a tree constructed with
horizon \(t\).
At a root interaction at time \(s\), the attached subtree is
independent of the root's earlier history and supplies a partner
with law \(g_s\). Conditioning on the last root interaction before
\(t\) gives
\[
 g_t=e^{-2t}f_0+2\int_0^t e^{-2(t-s)}T(g_s)\,ds.
\]
The root interacts at rate \(2\), so \(g_t\) is continuous in
total variation. Proposition \ref{prop:wellposed} therefore gives
\(g_t=f_t\).

This also identifies the path law. Given the root's past, its next
interaction occurs at rate \(2\); at an interaction time \(s\),
the attached subtree and mark supply independent inputs with laws
\(f_s\) and \(\theta\). Thus the root has the transition
mechanism of the nonlinear process, whose law is unique by
Proposition \ref{prop:wellposed}.
\end{proof}

\paragraph*{Assigning physical labels}
Give the roots their prescribed labels \(i_1,\ldots,i_k\).
At a birth from a vertex labelled \(a\), propose a label uniformly
from \(\{1,\ldots,N\}\setminus\{a\}\), using fresh randomness
at each birth. Assign it to the child
if it has not been used. Let \(\tau\) be the first backward time
at which a proposed label has already appeared anywhere in the
forest. At \(\tau\), we stop matching the two constructions.
Figure \ref{fig:forest-coupling} illustrates a repeated proposal.

\begin{figure}[tbp]
\centering
\begin{tikzpicture}[
  x=1.1cm,y=0.9cm,>=Latex,
  every node/.style={font=\small},
  history/.style={line width=0.9pt},
  birth/.style={line width=0.9pt,->}
]
  \draw[->] (-0.8,0) -- (-0.8,4.25)
    node[above,align=center] {physical\\time};
  \node[left] at (-0.8,0) {\(0\)};
  \node[left] at (-0.8,3.8) {\(T\)};
  \draw[gray!55,densely dotted] (-0.8,3.8) -- (0,3.8);
  \draw[gray!55,densely dotted] (-0.8,0) -- (4.8,0);

  \draw[history] (0,0) -- (0,3.8);
  \draw[history] (1.6,0) -- (1.6,2.85);
  \draw[history] (3.2,0) -- (3.2,1.95);
  \draw[history] (4.8,0) -- (4.8,1.05);
  \draw[birth] (0,2.85) -- (1.6,2.85);
  \draw[birth] (1.6,1.95) -- (3.2,1.95);
  \draw[birth] (3.2,1.05) -- (4.8,1.05)
    node[midway,above=3pt] {first repeat};

  \node[above] at (0,3.8) {root};
  \node[anchor=north] (rootLabel) at (0,-0.12) {\(i\)};
  \node[anchor=north] at (1.6,-0.12) {\(a\)};
  \node[anchor=north] at (3.2,-0.12) {\(b\)};
  \node[anchor=north] (candidateLabel) at (4.8,-0.12)
    {candidate \(i\)};

  \draw[densely dashed,->]
    (candidateLabel.south)
    .. controls (4.8,-1.0) and (0,-1.0) ..
    node[midway,below=2pt] {already used}
    (rootLabel.south);

  \draw[->] (6.05,3.55) -- (6.05,1.95)
    node[midway,right=5pt,align=left] {backward\\exploration};
\end{tikzpicture}
\caption{Backward exploration with proposed physical labels. Each
birth creates a fresh abstract vertex, whose path is needed only
before its attachment time. The last candidate label is \(i\),
which is already in use, so the matching stops. The dashed arrow
records this repeated label; it does not represent a finite pair-clock
ring.}
\label{fig:forest-coupling}
\end{figure}

\begin{lemma}[Coupling up to a repeated label]\label{lem:forest-coupling}
The finite backward graphs rooted at \(i_1,\ldots,i_k\) and the
labelled forest can be coupled to agree before \(\tau\).
If the finite system starts from \(f_0^{\otimes N}\), their initial
states can also be coupled so that
\[
 (Z_{i_1}^N,\ldots,Z_{i_k}^N)
 = (\overline Z_1,\ldots,\overline Z_k)
 \quad\text{on }\{\tau>T\}.
\]
\end{lemma}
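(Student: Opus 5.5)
We construct both objects on one probability space and run the backward explorations in parallel, driven by shared randomness. In backward time, the finite backward graph \(\bigcup_r\mathcal G_{i_r}\) evolves as follows. Given the current active label set \(\mathcal V_{s-}\) with \(n\) labels, every unordered pair with at least one endpoint in \(\mathcal V_{s-}\) rings at rate \(2/(N-1)\), independently. We split these rings into two groups. The first group consists of pairs with exactly one active endpoint \(a\). From each active \(a\), a ring with an inactive partner occurs at rate \(2(N-n)/(N-1)\), and the partner is uniform among the \(N-n\) inactive labels. The second group consists of pairs with both endpoints active. Across all of them, these ring at total rate \(2\binom n2/(N-1)\). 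By memorylessness and the independence of the clocks, the process of recorded events is exactly this Markov jump mechanism, with fresh marks \(U\sim\theta\) at each ring.

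In the labelled forest, each active vertex labelled \(a\) gives birth at rate \(2\) and proposes a label uniform on \(\{1,\dots,N\}\setminus\{a\}\). We thin this: the proposal is a fresh unused label with rate \(2(N-n)/(N-1)\), uniform on the unused labels, and a used label otherwise. Before \(\tau\), distinct vertices carry distinct labels. So the births with fresh proposals have exactly the same rates and label laws as the first-group rings of the finite graph.

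We therefore couple the two constructions by using the same Poisson clocks for these fresh-label events, the same proposed labels, and the same marks. Here the mark the forest gives the parent plays the role of \(U\) or \(\iota U\), according to the orientation of the pair. Since \(\theta\circ\iota^{-1}=\theta\), the oriented marks have law \(\theta\) in both constructions. Both constructions also have a superposed independent stream of ``collision'' events: used-label proposals in the forest, and active--active rings in the finite system. Both streams have total rate \(O(n^2/(N-1))\), but their rates differ, so we do not try to match them. Instead we set the coupling time to the first event in either stream. Strictly, \(\tau\) should be read as the first time a repeated-label proposal occurs in the forest, or an active--active ring occurs in the finite graph. We can arrange this by letting the finite system's active--active rings also count as proposals of an already used label, realized by giving the forest an extra independent thinning so that the rates dominate. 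Up to this stopping time, the recorded marked graphs agree event by event, including the vertical segments: each label's segment ends at its first encounter, and each child's segment ends at its birth time.

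For the initial states, on \(\{\tau>T\}\) the forest's vertical lineages are in bijection with the labels in \(\mathcal V_T\), which are distinct. We assign to lineage \(v\) with label \(a\) the state \(Z^N_a(0)\). Under \(f_0^{\otimes N}\), these are i.i.d. with law \(f_0\) and independent of the clocks and marks, matching Definition \ref{def:boltzmann-forest}. On the complementary event we assign the forest fresh independent states. Both reconstructions apply the same measurable rule \eqref{eq:finite-update} forward in time through identical marked graphs with identical initial data. By Lemma \ref{lem:cone-reconstruction} and the forward reconstruction in the forest, the root paths coincide on \(\{\tau>T\}\).

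The main obstacle is the collision bookkeeping. Active--active rings in the finite system, which have no counterpart in the forest, must be absorbed into the stopping event. The matching must also be measurable with respect to the exploration filtration, so that the marginal laws of both constructions are preserved. I expect to handle this by defining \(\tau\) as the first time either construction departs from the shared fresh-label mechanism, and by checking the marginals through the compensator description of each process.
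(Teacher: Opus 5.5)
The parts of your plan that match the paper are sound: pairing fresh-label proposals with rings that have one active endpoint, orienting marks as \(u\) or \(\iota u\), and assigning initial states. The gap is in the collision step, which is where this lemma has its content. You choose not to match the forest's used-label proposals with the finite active--active rings, and you redefine the stopping time as the first event in either stream. If the two streams are independent, you only get agreement on \(\{\min(\tau,\sigma)>T\}\), where \(\sigma\) is the first active--active ring of the finite graph. On \(\{\tau>T\}\) a finite active--active ring can still occur before \(T\). The finite paths then pass through an interaction the forest does not contain, so the lemma does not follow with \(\tau\) defined from the forest alone, as stated. This also matters downstream. Lemma \ref{lem:conflict} uses only the forest rate, and an independent extra stream of rate \(y(y-1)/(N-1)\) would change the constant in \(\eps_{N,k}(T)\).

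Your suggested repair, an ``extra independent thinning so that the rates dominate,'' is not carried out. If it means adding events to the forest, it would change the forest's law and break Lemma \ref{lem:forest-law}. The missing observation is that the forest already dominates. With \(y\) distinct active labels, each ordered pair \((a,j)\) of active labels carries forest proposals at rate \(2/(N-1)\). Used-label proposals therefore have total rate \(2y(y-1)/(N-1)\), exactly twice the finite active--active rate \(\binom y2\cdot 2/(N-1)\). Keep each used-label proposal with probability \(1/2\), using independent coins. Each unordered active pair \(\{a,j\}\) then gets two directions, each retained at rate \(1/(N-1)\). This gives a ring at the correct rate \(2/(N-1)\), with mark \(u\) or \(\iota u\) according to orientation. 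Every finite active--active ring is then a forest used-label proposal, so none occurs before \(\tau\). Rejected proposals also stop the matching, so \(\tau\) remains the forest-only time in the statement. You should also say explicitly that after \(\tau\) the two explorations continue separately with fresh Poisson increments and marks, since this is what preserves both marginal laws.
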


\begin{proof}
Suppose that \(y\) distinct physical labels are currently active.
A vertex labelled \(a\) proposes any particular unused label
\(j\) at rate \(2/(N-1)\), exactly the rate of the finite pair
clock \(\{a,j\}\). These events can therefore be matched.

Rings between two active finite labels require a little care.
The total rate of repeated proposals in the forest is
\(2y(y-1)/(N-1)\). The corresponding finite rings have total rate
\[
 \binom y2\frac{2}{N-1}=\frac{y(y-1)}{N-1}.
\]
Retain each repeated proposal with probability \(1/2\), using
independent coins, to produce these finite rings. For each unordered
pair, its two proposal directions then give the required rate
\(2/(N-1)\). Every repeated proposal stops the matching, including
one rejected by this thinning. Thus \(\tau\) need not be the time
of an actual finite ring.

The marks can be matched as well. If the active vertex \(a\) sees
forest mark \(u\) and proposes \(j\), use unordered-pair mark
\(u\) when \(a<j\), and \(\iota u\) when \(j<a\).
In both cases, \(a\) sees \(u\), and
\(\theta\circ\iota^{-1}=\theta\) gives the correct finite mark
law. These rate and mark assignments construct the joint backward
exploration up to \(\tau\). 
Beyond \(\tau\), complete the two backward explorations separately, using fresh Poisson increments at their prescribed rates and fresh independent marks with law \(\theta\). This gives the required conditional laws given the history already revealed.

Before a repeated proposal, the assigned physical labels are
distinct. Since this assignment is independent of the initial
states, the corresponding finite and abstract lineages can receive
identical \(f_0\) states while preserving their product laws.
Use independent states for any remaining abstract lineages.
On \(\{\tau>T\}\), Lemma \ref{lem:cone-reconstruction} then
gives equality of the root paths.
\end{proof}

\begin{lemma}[Conflict probability]\label{lem:conflict}
The first repeated proposal satisfies
\[
 \Pp(\tau\le T)\le\frac12\eps_{N,k}(T).
\]
\end{lemma}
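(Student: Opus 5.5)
The plan is to bound \(\Pp(\tau\le T)\) by the expected number of repeated proposals on \([0,T]\) in the (unstopped) labelled forest. That expectation is the integral of a conflict intensity, which can be computed from the Yule process of Definition \ref{def:boltzmann-forest}. Let \(n_s\) be the number of active abstract vertices at backward time \(s\). It is a Yule process started at \(n_0=k\), with birth rate \(2n\) in state \(n\). The construction of the forest and of the proposals does not depend on whether a conflict has occurred, so we may keep proposing labels after \(\tau\) and count all repeated proposals. Then \(\{\tau\le T\}\) is contained in the event that at least one repeated proposal occurs in backward time \([0,T]\), and Markov's inequality gives \(\Pp(\tau\le T)\le\E[\#\text{repeated proposals in }[0,T]]\).

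Next I would compute the intensity of repeated proposals. At backward time \(s\) each active vertex gives birth at rate \(2\). When the parent carries label \(a\), the proposal is uniform on the \(N-1\) labels different from \(a\). Among these, at most \(n_{s-}-1\) have already appeared: at most one physical label has been used per vertex, and \(a\) itself is excluded. Given the past, a birth therefore produces a repeated proposal with probability at most \((n_{s-}-1)/(N-1)\). The compensator of the counting process of repeated proposals is thus bounded by \(\int_0^T 2n_s(n_s-1)/(N-1)\,ds\). This yields
\[
 \Pp(\tau\le T)\le \frac{2}{N-1}\int_0^T \E[n_s(n_s-1)]\,ds.
\]
The compensator step needs a little care, since labels drawn after a conflict may themselves be repeated. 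The crude bound ``at most \(n-1\) used labels other than \(a\)'' holds regardless of such repeats, so this should cause no difficulty.

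Finally I would compute the factorial moment. A rate-\(2\) Yule process started from one individual is geometric with success parameter \(p=e^{-2s}\), so it has mean \(e^{2s}\) and variance \(e^{4s}-e^{2s}\). Since \(n_s\) is a sum of \(k\) independent copies, \(\E n_s=ke^{2s}\) and \(\operatorname{Var}(n_s)=k(e^{4s}-e^{2s})\). Hence
\[
 \E[n_s(n_s-1)]=\operatorname{Var}(n_s)+(\E n_s)^2-\E n_s
 =k(k+1)e^{4s}-2ke^{2s}\le k(k+1)e^{4s}.
\]
Integrating gives \(\frac{2}{N-1}\cdot\frac{k(k+1)(e^{4T}-1)}{4}=\frac12\eps_{N,k}(T)\), which is the claimed bound.

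The only step needing real justification is the compensator bound above; the rest is an explicit Yule-process computation.
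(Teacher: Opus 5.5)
Your proof is correct and follows the paper's argument. Both bound \(\Pp(\tau\le T)\) by integrating the repeated-proposal intensity \(2Y_s(Y_s-1)/(N-1)\) against time and then compute the Yule factorial moment \(\E[Y_s(Y_s-1)]=k(k+1)e^{4s}-2ke^{2s}\). The differences are cosmetic: the paper uses the counting process stopped at \(\tau\), which avoids needing any labelling convention after a conflict (the point you flag), and it gets the factorial moment from the ODEs \(m_1'=2m_1\), \(m_2'=4m_2+4m_1\) rather than from the geometric law.
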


\begin{proof}
Let \(Y_s\) be the number of active vertices in the unrestricted
\(k\)-root forest at backward time \(s\). Before \(\tau\),
each of the \(Y_s\) vertices gives birth at rate \(2\), and the
proposed label is already in use with probability
\((Y_s-1)/(N-1)\). The counting process stopped at its first
repeated proposal therefore gives
\[
 \Pp(\tau\le T)
 =\frac{2}{N-1}\int_0^T
 \E[\1_{\{s<\tau\}}Y_s(Y_s-1)]\,ds
 \le\frac{2}{N-1}\int_0^T\E[Y_s(Y_s-1)]\,ds.
\]
For this Yule process, let
\(m_1(s)=\E Y_s\) and \(m_2(s)=\E[Y_s(Y_s-1)]\).
The birth rate \(2n\) gives
\[
 m_1'=2m_1,\qquad m_2'=4m_2+4m_1,
 \qquad m_1(0)=k,\quad m_2(0)=k(k-1).
\]
Hence
\[
 m_1(s)=ke^{2s},\qquad
 m_2(s)=k(k+1)e^{4s}-2ke^{2s}.
\]
Substituting and dropping the negative term yields
\[
 \Pp(\tau\le T)
 \le\frac{2k(k+1)}{N-1}\int_0^T e^{4s}\,ds
 =\frac12\eps_{N,k}(T).
\]
\end{proof}

\begin{proof}[Proof of Theorem \ref{thm:finite-time-main}]
For \(T>0\), Lemmas \ref{lem:forest-law} and
\ref{lem:forest-coupling} couple the finite tagged paths to the
forest root paths, whose joint law is \(Q^{\otimes k}\), with
equality on \(\{\tau>T\}\).
The coupling inequality for our total-variation convention gives
\[
 \left\|\Law(Z_{i_1}^N,\ldots,Z_{i_k}^N)-Q^{\otimes k}\right\|_{\TV}
 \le 2\Pp(\tau\le T)\le\eps_{N,k}(T).
\]
Here all paths are restricted to \([0,T]\). This proves
\eqref{eq:finite-horizon-path-main}. Evaluation at any fixed time
is measurable and cannot increase total variation, giving
\eqref{eq:finite-horizon-marginal-main}. For \(T=0\), both claims
follow directly from the product initial law.
\end{proof}

\subsection{Moment bounds and contraction}\label{subsec:proof-contraction}

We now establish the estimates used to control the dynamics over long
times. The moment bound also allows us to pass from the total-variation
estimate of the preceding subsection to Wasserstein distance.

\paragraph*{Moment bounds}
The drift assumption bounds the moment after one interaction by a
fraction of the current moment, plus a contribution from the anchors
and the random inputs. Iterating this bound gives the following result.

\begin{lemma}[Uniform moment bounds]\label{lem:moment}
Under Assumptions \ref{ass:inv} and \ref{ass:qdrift}, let
\(f_0\in\mathcal P_{\rho,q}(D)\), and set
\[
\begin{gathered}
 a:=a_x+a_y,\qquad C:=(c_b+c_c)\int |b|^q\rho(db)+c_0,\\
 K:=\max\left\{\int |x|^qf_0(dx,db),\frac{C}{1-a}\right\}.
\end{gathered}
\]
The nonlinear solution remains in \(\mathcal P_{\rho,q}(D)\), with
\[
 \sup_{t\ge0}\int |x|^qf_t(dx,db)\le K.
\]
For the finite system started from \(f_0^{\otimes N}\),
\[
 \sup_{N\ge2}\sup_{t\ge0}
 \E\frac1N\sum_{i=1}^N|X_i^N(t)|^q\le K.
\]
By exchangeability, \(\E|X_i^N(t)|^q\le K\) for every agent
\(i\) and time \(t\ge0\).
\end{lemma}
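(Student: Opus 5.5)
The plan is to derive a linear differential inequality for the \(q\)-th moment in each system and compare it with the constant \(C/(1-a)\). The drift bound \eqref{eq:qdrift} is integrated against the relevant laws, and Assumption \ref{ass:inv} guarantees that it applies along the dynamics. Write \(M_t:=\int|x|^qf_t(dx,db)\). Since the anchor marginal of \(f_t\) is \(\rho\) for all \(t\) (Proposition \ref{prop:wellposed}), integrating \eqref{eq:qdrift} against \(f_t\otimes f_t\otimes\theta\) should give
\[
 \int|x|^q\,T(f_t)(dx,db)\le aM_t+C .
\]
Here Assumption \ref{ass:inv} is used to stay in \(D\): \(f_t\) is supported in the closed set \(D\), because the mild form \eqref{eq:mild} is a mixture of \(f_0\) and of the laws \(T(f_s)\), and \(T\) preserves \(\mathcal P(D)\). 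Since \(|x|^q\) is unbounded, I would first work with the truncations \(\varphi_R=|x|^q\wedge R\in B_b(E)\) in \eqref{eq:mild}, and then let \(R\to\infty\) by monotone convergence. This should yield the integral inequality
\[
 M_t\le e^{-2t}M_0+2\int_0^te^{-2(t-s)}(aM_s+C)\,ds .
\]

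The main obstacle is that \(M_s\) could a priori be infinite, so the Gronwall-type step needs justification. To handle it, I would avoid working with \(M_t\) directly and use the Picard iteration from \eqref{eq:mild}, or equivalently the Wild-sum representation. I would then show by induction that each iterate has moment at most \(K\); this iteration presumably underlies the construction in Proposition \ref{prop:wellposed}. The induction step works because if \(M_s\le K\) for all \(s\), then the right-hand side is at most
\[
 e^{-2t}K+(1-e^{-2t})(aK+C)\le K,
\]
using \(aK+C\le K\), which follows from \(K\ge C/(1-a)\). Lower semicontinuity of the moment under convergence of the iterates (in total variation, with truncation) then passes the bound \(K\) to \(f_t\). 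As an alternative, a Gronwall argument on the bounded truncated moments directly would avoid the iteration. Either way the conclusion \(f_t\in\mathcal P_{\rho,q}(D)\) follows, together with \(\sup_tM_t\le K\).

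For the finite system, set \(S_t:=\E\frac1N\sum_i\varphi(X_i^N(t))\) with \(\varphi(x)=|x|^q\). I would compute its evolution with Dynkin's formula, again first using truncations or a localization by stopping times. A ring of the pair \(\{i,j\}\) at rate \(2/(N-1)\) replaces \(|x_i|^q+|x_j|^q\) by a quantity whose conditional mean, by \eqref{eq:qdrift} applied to both agents with marks \(u\) and \(\iota u\), is at most
\[
 (a_x+a_y)(|x_i|^q+|x_j|^q)+(c_b+c_c)(|b_i|^q+|b_j|^q)+2c_0 .
\]
Here \(\iota\) preserves \(\theta\), and the states remain in \(D\) by Assumption \ref{ass:inv} and the support of \(f_0^{\otimes N}\). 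Summing over pairs, each agent belongs to \(N-1\) pairs, and the anchors are i.i.d. with law \(\rho\). This gives the inequality
\[
 S_t\le S_0+2\int_0^t\bigl((a-1)S_s+C\bigr)ds .
\]
Comparison with the ODE \(y'=2((a-1)y+C)\), together with \(S_0\le K\), then gives \(S_t\le K\) for all \(t\) and all \(N\). Exchangeability of the law, which is inherited from \(f_0^{\otimes N}\) and the symmetric clocks, finally gives the per-agent bound \(\E|X_i^N(t)|^q\le K\).
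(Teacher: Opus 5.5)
Your proposal is correct and is essentially the paper's proof. The nonlinear bound is obtained exactly as you describe: Picard iterates of the mild equation, the induction step $e^{-2t}K+(1-e^{-2t})(aK+C)\le K$, and a passage to the limit through $|x|^q\wedge R$. For the finite system the paper makes the same pairwise drift computation, but on the embedded jump chain: one update multiplies the conditional mean of the average $q$-th moment by at most $r_N=1-2(1-a)/N$, up to an additive anchor term. It then averages over the Poisson number $J_t$ of updates using $\E[r_N^{J_t}]=e^{-2(1-a)t}$, which propagates integrability step by step and so avoids the stopping-time localization your Dynkin argument needs.

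Of the two technical devices you offer for Dynkin's formula, keep the localization. Truncating to $|x|^q\wedge R$ does not close the estimate, because the drift bound \eqref{eq:qdrift} is not inherited by truncated moments. The same objection applies to your truncated-Gronwall alternative for the nonlinear equation.
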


The proof is given in Appendix \ref{subsec:technical-moments}.
The next lemma explains how this moment control is used. We can
couple two laws so that the probability of different states is half
their total-variation distance. The moment bound then controls the
expected cost on that event.

\begin{lemma}[From total variation to Wasserstein distance]\label{lem:interp}
Let \(p\ge1\), \(r>p\), and \(\mu,\nu\in\mathcal P(E^k)\).
If
\[
 \int d_{p,k}(z,0)^r\mu(dz)
 +\int d_{p,k}(z,0)^r\nu(dz)\le M,
\]
then
\[
 W_{p,k}(\mu,\nu)
 \le C_{p,r,k}M^{1/r}
       \|\mu-\nu\|_{\TV}^{1/p-1/r}.
\]
Here \(C_{p,r,k}<\infty\) depends only on \(p,r,k\).
If instead both laws are supported on a set of diameter at most
\(R\) under \(d_{p,k}\), then
\[
 W_{p,k}(\mu,\nu)
 \le R\|\mu-\nu\|_{\TV}^{1/p}.
\]
\end{lemma}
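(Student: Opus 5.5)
We prove Lemma \ref{lem:interp} with the standard maximal coupling followed by a truncation/H\"older estimate. Write \(\delta:=\frac12\|\mu-\nu\|_{\TV}\in[0,1]\). With the paper's convention \eqref{eq:tv-convention}, there is a coupling \((Z,\widetilde Z)\) of \(\mu\) and \(\nu\) with \(\Pp(Z\ne\widetilde Z)=\delta\). This is the maximal coupling: put the common part \(\mu\wedge\nu\), of mass \(1-\delta\), on the diagonal, and couple the normalized remainders independently. Then
\[
 W_{p,k}(\mu,\nu)^p\le \E\bigl[d_{p,k}(Z,\widetilde Z)^p\1_{\{Z\ne\widetilde Z\}}\bigr].
\]
In the bounded case we are done at once. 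On the event \(\{Z\ne\widetilde Z\}\) the integrand is at most \(R^p\), so \(W_{p,k}^p\le R^p\delta\le R^p\|\mu-\nu\|_{\TV}\), which gives the second claim.

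In the moment case, we first use the triangle inequality for the metric \(d_{p,k}\), followed by \((x+y)^p\le 2^{p-1}(x^p+y^p)\):
\[
 d_{p,k}(Z,\widetilde Z)^p\le 2^{p-1}\bigl(d_{p,k}(Z,0)^p+d_{p,k}(\widetilde Z,0)^p\bigr).
\]
We then apply H\"older's inequality with exponents \(r/p\) and \(r/(r-p)\) to each term:
\[
 \E\bigl[d_{p,k}(Z,0)^p\1_{\{Z\ne\widetilde Z\}}\bigr]
 \le \bigl(\E d_{p,k}(Z,0)^r\bigr)^{p/r}\delta^{1-p/r}
 \le M^{p/r}\delta^{1-p/r}.
\]
The same bound holds for the \(\widetilde Z\) term. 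Summing the two terms gives \(W_{p,k}^p\le 2^pM^{p/r}\delta^{1-p/r}\). Taking \(p\)-th roots and using \(\delta\le\|\mu-\nu\|_{\TV}\), we obtain
\[
 W_{p,k}(\mu,\nu)\le 2M^{1/r}\|\mu-\nu\|_{\TV}^{1/p-1/r},
\]
so one may take \(C_{p,r,k}=2\). This constant is in fact independent of \(k\), which is consistent with the statement.

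The only point requiring care is the existence of the maximal coupling with exactly the factor \(\frac12\), given the \([0,2]\) normalization. This is classical on the Polish space \(E^k\). Two edge cases need a separate remark. If \(\delta=0\), the two laws coincide and there is nothing to prove. If \(\delta=1\), there is no common part and the coupling is simply independent. Finiteness of \(W_{p,k}\) is automatic, since the \(r\)-th moments, and hence the \(p\)-th moments, are finite. No real obstacle is expected.
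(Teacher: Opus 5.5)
Your proposal is correct and follows the paper's proof: a maximal coupling with \(\Pp(Z\ne\widetilde Z)=\tfrac12\|\mu-\nu\|_{\TV}\), H\"older's inequality with exponents \(r/p\) and \(r/(r-p)\), and the triangle inequality for \(d_{p,k}\), with the bounded case handled identically. The only difference is the order of the steps. You split \(d_{p,k}(Z,\widetilde Z)^p\) before applying H\"older and obtain \(C_{p,r,k}=2\), whereas the paper applies H\"older first and then bounds \(\E\, d_{p,k}(Z,\widetilde Z)^r\le 2^{r-1}M\), obtaining \(2^{1-1/p}\); the difference in the constant is immaterial.
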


The proof is given in Appendix \ref{subsec:technical-interpolation}.
For the main theorem, we use \(p=1\) and \(r=q\). The opinion
bound above and the anchor moment in Assumption \ref{ass:qdrift}
give the required moment on \(E^k\). For the unbounded-domain
case of Corollary \ref{cor:w2}, we use \(p=2\) and the assumed
moment of order \(r>2\).

\paragraph*{Nonlinear contraction}
To compare two nonlinear updates, we couple the opinions while
keeping their anchors equal. We also use the same random mark
and coupled partners. The partners remain independent of the
agents being updated, as required by the definition of \(T\).

\begin{lemma}[Contraction of the nonlinear update]\label{lem:Tcontr}
\leavevmode
\begin{longlist}[(iii)]
\item[(i)] Under Assumption \ref{ass:w1contr}, if
\(\mu,\nu,T\mu,T\nu\in\mathcal P_{\rho,1}(D)\), then
\[
 \W_1^\rho(T\mu,T\nu)\le\ell\,\W_1^\rho(\mu,\nu).
\]
\item[(ii)] Under Assumption \ref{ass:square}, if
\(\mu,\nu,T\mu,T\nu\in\mathcal P_{\rho,2}(D)\), then
\[
 \W_2^\rho(T\mu,T\nu)\le\kappa\,\W_2^\rho(\mu,\nu).
\]
\end{longlist}
\end{lemma}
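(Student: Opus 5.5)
The plan is a synchronous coupling. Fix \(\pi\in\Gamma_\rho(\mu,\nu)\) that nearly attains \(\W_1^\rho(\mu,\nu)\), say within \(\eps>0\). An optimal coupling exists by standard compactness, but near-optimality is enough. Draw two independent copies \((X,\widetilde X,B)\sim\pi\) and \((Y,\widetilde Y,C)\sim\pi\), together with an independent mark \(U\sim\theta\). Set
\[
 X^+=\Phi(X,Y;B,C,U),\qquad \widetilde X^+=\Phi(\widetilde X,\widetilde Y;B,C,U).
\]

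First, check that \((X^+,B)\) has law \(T\mu\) and \((\widetilde X^+,B)\) has law \(T\nu\). The pairs \((X,B)\) and \((Y,C)\) are independent with law \(\mu\), and \(U\) is independent of both, so this is exactly definition \eqref{eq:Tdef}. The same holds for \(\nu\), since \((\widetilde X,B)\) and \((\widetilde Y,C)\) are independent with law \(\nu\). Hence the law of \((X^+,\widetilde X^+,B)\) lies in \(\Gamma_\rho(T\mu,T\nu)\).

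The anchors are shared: \(B\) for the updated agent and \(C\) for the partner. The states \((X,B),(\widetilde X,B),(Y,C),(\widetilde Y,C)\) all lie in \(D\) almost surely, because \(\mu,\nu\) are supported on \(D\). So Assumption \ref{ass:w1contr} applies pointwise. Condition on \((X,\widetilde X,B,Y,\widetilde Y,C)\) and integrate over \(U\) with Fubini (measurability of \(\Phi\)) to get
\[
 \E|X^+-\widetilde X^+|\le \ell_x\E|X-\widetilde X|+\ell_y\E|Y-\widetilde Y|
 =\ell\int|x-\widetilde x|\,d\pi\le \ell(\W_1^\rho(\mu,\nu)+\eps).
\]
Take the infimum over couplings and let \(\eps\to0\) to obtain (i). The hypothesis that \(T\mu,T\nu\in\mathcal P_{\rho,1}(D)\) only ensures that the left-hand side is a finite distance within the stated class. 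The anchor marginal of \(T\mu\) is \(\rho\) because the anchor is not updated.

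Part (ii) uses the same coupling. Assumption \ref{ass:square} gives
\[
 \E|X^+-\widetilde X^+|^2\le L_x^2\E|X-\widetilde X|^2+L_y^2\E|Y-\widetilde Y|^2=\kappa^2\int|x-\widetilde x|^2d\pi.
\]
This is at most \(\kappa^2(\W_2^\rho(\mu,\nu)^2+\eps)\) when \(\pi\) is nearly optimal for the quadratic cost. Taking square roots proves (ii).

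The main point needing care is that the joint law built from two independent copies of \(\pi\) actually realizes the product structure required by \(T\). In particular, the partner \((Y,C)\) must be independent of \((X,B)\) under both marginals at once. This holds because the full triples are drawn independently. The remaining steps are routine.
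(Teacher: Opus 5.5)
Your proposal is correct and follows the paper's proof: the paper also takes an anchor-preserving coupling \((X,\widetilde X,B)\), an independent copy \((Y,\widetilde Y,C)\), and a common mark \(U\). It then applies the contraction assumption to \(X^+\) and \(\widetilde X^+\) and takes the infimum over initial couplings. Your additional remarks make explicit details the paper leaves implicit: the near-optimal \(\eps\)-coupling, the four states lying in \(D\) almost surely, and the independence of the partner that the definition of \(T\) requires.
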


\begin{proof}
Let \((X,\widetilde X,B)\) be an anchor-preserving coupling of
\(\mu\) and \(\nu\), and let \((Y,\widetilde Y,C)\) be an
independent copy. Take an independent mark \(U\sim\theta\),
and write
\[
 X^+=\Phi(X,Y;B,C,U),\qquad
 \widetilde X^+=\Phi(\widetilde X,\widetilde Y;B,C,U).
\]
Then \((X^+,\widetilde X^+,B)\) couples \(T\mu\) and
\(T\nu\) with equal anchors. Assumption \ref{ass:w1contr} gives
\[
 \E|X^+-\widetilde X^+|
 \le\ell_x\E|X-\widetilde X|
     +\ell_y\E|Y-\widetilde Y|
 =\ell\,\E|X-\widetilde X|.
\]
Taking the infimum over the initial coupling proves \textup{(i)}.
Under Assumption \ref{ass:square}, the same construction gives
\[
 \E|X^+-\widetilde X^+|^2
 \le\kappa^2\E|X-\widetilde X|^2,
\]
which proves \textup{(ii)}.
\end{proof}

In our applications, the required moments of \(T\mu\) and
\(T\nu\) follow from the
drift assumption (with order greater than two for \(W_2\)) or
boundedness of \(D\). The next lemma uses the resulting moment
control to compare two nonlinear evolutions.

\begin{lemma}[Contraction of the nonlinear evolution]
\label{lem:nonlinear-contract}
Let \((f_t)\) and \((g_t)\) be nonlinear solutions supported on
\(D\), with anchor marginal \(\rho\).
\begin{longlist}[(iii)]
\item[(i)] Suppose Assumption \ref{ass:w1contr} holds and the first
opinion moments of both solutions are bounded on each finite time
interval. Then
\[
 \W_1^\rho(f_t,g_t)
 \le e^{-2(1-\ell)t}\W_1^\rho(f_0,g_0).
\]
\item[(ii)] Suppose Assumption \ref{ass:square} holds and the second
opinion moments of both solutions are bounded on each finite time
interval. Then
\[
 \W_2^\rho(f_t,g_t)
 \le e^{-(1-\kappa^2)t}\W_2^\rho(f_0,g_0).
\]
\end{longlist}
\end{lemma}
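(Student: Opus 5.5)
We will prove the bound by coupling two nonlinear processes driven by common randomness, using the mild formulation \eqref{eq:mild}. Fix \(t>0\) and set \(D(s)=\W_1^\rho(f_s,g_s)\) for part (i). First we note that \(D(s)<\infty\) for all \(s\), because the first opinion moments are bounded on finite intervals and the anchor marginals coincide. The key step is a one-step estimate for the mild form. By \eqref{eq:mild}, \(f_t\) is the mixture with weight \(e^{-2t}\) on \(f_0\) and density \(2e^{-2(t-s)}\,ds\) on \(T(f_s)\), and \(g_t\) is the same mixture of \(g_0\) and \(T(g_s)\). The anchor-preserving distance is jointly convex under common mixtures: given anchor-preserving couplings \(\pi_0\in\Gamma_\rho(f_0,g_0)\) and \(\pi_s\in\Gamma_\rho(Tf_s,Tg_s)\), the mixture \(e^{-2t}\pi_0+\int_0^t2e^{-2(t-s)}\pi_s\,ds\) is again anchor-preserving, since every piece has anchor marginal \(\rho\) on the shared coordinate. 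To use these couplings inside the integral we need them to depend measurably on \(s\). We handle this by a measurable selection of \(\varepsilon\)-optimal couplings. An alternative that avoids the selection issue is to disintegrate in \(b\) and work with conditional laws \(f_s(\cdot\mid b)\), so that \(\W_1^\rho(\mu,\nu)=\int W_1(\mu(\cdot\mid b),\nu(\cdot\mid b))\rho(db)\). In that representation the one-dimensional quantile coupling is explicit, and convexity follows from the triangle inequality for \(L^1\) distances of distribution functions. We prefer this route, since \(W_1\) on \(\R\) equals \(\int|F-G|\). This gives
\[
 D(t)\le e^{-2t}D(0)+2\int_0^t e^{-2(t-s)}\W_1^\rho(Tf_s,Tg_s)\,ds .
\]

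Next we apply Lemma \ref{lem:Tcontr}(i) to bound \(\W_1^\rho(Tf_s,Tg_s)\le\ell D(s)\). This needs \(Tf_s,Tg_s\in\mathcal P_{\rho,1}(D)\). Finiteness of the first moment of \(T\mu\) should follow by comparison with a reference point: Assumption \ref{ass:w1contr} bounds \(\int|\Phi(x,y;b,c,u)-\Phi(x_0,y_0;b,c,u)|\,\theta(du)\) only when the anchors match, so it is cleaner to note that the contraction proof itself only uses the coupling and never needs finiteness except to make the inequality meaningful. If \(\W_1^\rho(Tf_s,Tg_s)\) were infinite, the coupling built in that proof would already give a finite expected cost bounded by \(\ell D(s)\). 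So we can rerun that construction directly rather than cite the lemma's hypothesis. We obtain \(u(t):=e^{2t}D(t)\le D(0)+2\ell\int_0^t u(s)\,ds\).

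Finally, Grönwall's inequality gives \(u(t)\le D(0)e^{2\ell t}\), that is, \(D(t)\le e^{-2(1-\ell)t}D(0)\). To apply Grönwall we need \(u\) to be locally bounded and measurable in \(s\). Local boundedness comes from the assumed moment bounds on finite intervals. Measurability we get from continuity in total variation (Proposition \ref{prop:wellposed}) combined with the disintegration formula; if needed, we can replace \(D(s)\) by an upper envelope and apply Grönwall in integral form. For part (ii) the argument is the same, with squared costs. Convexity of \((\W_2^\rho)^2\) under mixtures together with Lemma \ref{lem:Tcontr}(ii) gives \(e^{2t}D_2(t)^2\le D_2(0)^2+2\kappa^2\int_0^te^{2s}D_2(s)^2\,ds\). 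Grönwall then yields \(D_2(t)^2\le e^{-2(1-\kappa^2)t}D_2(0)^2\), and taking square roots gives the claimed rate \(e^{-(1-\kappa^2)t}\). We expect the main obstacle to be the mixture-convexity step, namely building the time-integrated anchor-preserving coupling measurably; the disintegration-in-\(b\) approach is our first choice for getting around it.
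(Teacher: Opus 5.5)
Your argument is correct, but it is organized differently from the paper's. The paper never re-optimizes at intermediate times. It fixes one anchor-preserving coupling of \(f_0\) and \(g_0\) and runs the triple \((X,\widetilde X,B)\) as a nonlinear jump process with the synchronous update: common mark, and partner triple drawn independently from the current joint law. It then uses the Picard argument on triples and uniqueness from Proposition \ref{prop:wellposed} to identify the two marginals as \(f_t\) and \(g_t\). Finally, it applies Gronwall to \(\delta_1(t)=\E|X_t-\widetilde X_t|\) and takes the infimum over initial couplings.

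You instead work with \(D(s)=\W_1^\rho(f_s,g_s)\) directly, re-optimizing at every time. You combine mixture convexity of the anchor-preserving distance along the mild formula \eqref{eq:mild} with the one-step bound of Lemma \ref{lem:Tcontr}. You are also right that the moment hypothesis on \(T\mu\) in that lemma is superfluous, since the coupling itself has cost at most \(\ell D(s)\).

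Your route avoids building the coupled nonlinear process, but it pays with a measurability layer that the paper's route does not need. First, the couplings must be chosen measurably in \(s\); your disintegration supplies them through conditional laws jointly measurable in \((s,b)\), using that opinions are real-valued. Second, \(s\mapsto D(s)\) must be measurable. This is easy: \(\W_1^\rho\) is lower semicontinuous under weak convergence, because near-optimal triple couplings are tight and their limits still share the anchor coordinate. The paper's single evolving coupling needs no selection and does not use the one-dimensional structure.

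One small point for part (ii): the identity \(W_1=\int|F-G|\,dx\) has no analogue for \(W_2^2\). There you should mix the explicit, jointly measurable quantile couplings \((F_{s,b}^{-1}(V),G_{s,b}^{-1}(V))\) to obtain convexity of \((\W_2^\rho)^2\) along the mild formula.
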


\begin{proof}
Start from any anchor-preserving coupling of \(f_0\) and \(g_0\).
We evolve the triple \((X,\widetilde X,B)\) at rate \(2\), using
the joint update in the preceding proof. At each interaction, the
partner triple is an independent sample from the current joint law.
The Picard argument in Appendix \ref{subsec:technical-wellposed}
applies to this measurable update on triples. Its two marginals
solve \eqref{eq:nl}, so uniqueness identifies them as \(f_t\)
and \(g_t\).

For \(\delta_1(t):=\E|X_t-\widetilde X_t|\), the mild equation
of the coupled law and the preceding calculation give
\[
 \delta_1(t)\le e^{-2t}\delta_1(0)
 +2\ell\int_0^t e^{-2(t-s)}\delta_1(s)\,ds.
\]
The assumed moment bounds justify integration of this unbounded
cost by Tonelli's theorem. Multiplying by \(e^{2t}\) and applying
Gronwall's lemma yields
\(\delta_1(t)\le e^{-2(1-\ell)t}\delta_1(0)\).
Taking the infimum over the initial coupling proves \textup{(i)}.

For \(\delta_2(t):=\E|X_t-\widetilde X_t|^2\), Assumption
\ref{ass:square} gives instead
\[
 \delta_2(t)\le e^{-2t}\delta_2(0)
 +2\kappa^2\int_0^t e^{-2(t-s)}\delta_2(s)\,ds,
\]
and hence
\(\delta_2(t)\le e^{-2(1-\kappa^2)t}\delta_2(0)\).
Taking the infimum and then the square root proves \textup{(ii)}.
\end{proof}

\paragraph*{Finite-system contraction}
For the finite system, we use the same anchors, pair clocks, and
marks in two copies. The initial opinions may be dependent within
each population. This lets us compare an evolving population with
a stationary one.

\begin{lemma}[Finite-population synchronous contraction]
\label{lem:finite-contract}
Couple two finite systems with initial states in \(D^N\), equal
anchors coordinatewise, and common pair clocks and marks, independent
of their joint initial state. Define
\[
 D_N^{(1)}(t):=\frac1N\sum_{i=1}^N
 |X_i^N(t)-\widetilde X_i^N(t)|,
 \qquad
 D_N^{(2)}(t):=\frac1N\sum_{i=1}^N
 |X_i^N(t)-\widetilde X_i^N(t)|^2.
\]
\begin{longlist}[(iii)]
\item[(i)] Under Assumptions \ref{ass:inv} and \ref{ass:w1contr},
if \(\E D_N^{(1)}(0)<\infty\), then
\[
 \E D_N^{(1)}(t)
 \le e^{-2(1-\ell)t}\E D_N^{(1)}(0).
\]
\item[(ii)] Under Assumptions \ref{ass:inv} and \ref{ass:square},
if \(\E D_N^{(2)}(0)<\infty\), then
\[
 \E D_N^{(2)}(t)
 \le e^{-2(1-\kappa^2)t}\E D_N^{(2)}(0).
\]
\end{longlist}
\end{lemma}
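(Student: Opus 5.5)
The plan is to compute the generator of the synchronously coupled system applied to \(D_N^{(1)}\) (resp. \(D_N^{(2)}\)) and close with Gronwall's lemma. Both populations share the anchors \(B_i\), pair clocks and marks, and by Assumption \ref{ass:inv} every post-jump state stays in \(D^N\). The contraction hypotheses may therefore be applied at every ring, for both copies, with equal anchors.

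At a ring of the pair \(\{i,j\}\) with \(i<j\), the coordinates \(i\) and \(j\) of both copies are updated with marks \(U\) and \(\iota U\), and every other coordinate is unchanged. Write \(\Delta_a=|X_a^N(t-)-\widetilde X_a^N(t-)|\). Conditionally on the pre-jump state, Assumption \ref{ass:w1contr} bounds the expected new difference of agent \(i\) by \(\ell_x\Delta_i+\ell_y\Delta_j\). The same bound, with \(i\) and \(j\) swapped, holds for agent \(j\), because \(\iota U\) has law \(\theta\). The expected change in \(N D_N^{(1)}\) caused by this ring is therefore at most
\[
 (\ell_x+\ell_y-1)(\Delta_i+\Delta_j)=-(1-\ell)(\Delta_i+\Delta_j).
\]
Each ring has rate \(2/(N-1)\), and each agent belongs to \(N-1\) pairs. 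Summing over pairs gives, formally,
\[
 \frac{d}{dt}\E D_N^{(1)}(t)\le\frac{1}{N}\cdot\frac{2}{N-1}\cdot(N-1)\cdot\bigl(-(1-\ell)\bigr)\sum_a\E\Delta_a=-2(1-\ell)\,\E D_N^{(1)}(t).
\]
The squared case is the same computation with Assumption \ref{ass:square}, where \(L_x^2+L_y^2=\kappa^2\), and gives rate \(-2(1-\kappa^2)\).

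To make this rigorous without regularity of \(\Phi\) or a priori moment bounds, I would avoid the generator and argue through the embedded jump chain, which is legitimate because the total ring rate \(N\) is bounded. Conditioning on the time-\(0\) state, the number of rings in \([0,t]\) is Poisson\((Nt)\). Each ring selects a uniform pair independently of the state and marks, so one step of the chain satisfies
\[
 \E[N D^{(1)}_{\rm after}\mid\text{past}]\le N D^{(1)}_{\rm before}-\frac{2(1-\ell)}{N}\,N D^{(1)}_{\rm before},
\]
since each agent lies in the chosen pair with probability \(2/N\). Iterating with the tower property gives
\[
 \E D^{(1)}\text{ after }n\text{ steps}\le\bigl(1-2(1-\ell)/N\bigr)^n\,\E D_N^{(1)}(0).
\]
All quantities are nonnegative, so Tonelli applies and the finiteness of \(\E D_N^{(1)}(0)\) propagates. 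Averaging over the Poisson\((Nt)\) number of steps gives
\[
 \E D_N^{(1)}(t)\le\E\bigl[(1-2(1-\ell)/N)^{P}\bigr]\E D_N^{(1)}(0)=e^{-Nt\cdot2(1-\ell)/N}\,\E D_N^{(1)}(0),
\]
which is exactly the claimed bound, with no Gronwall step needed.

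The only delicate points are these. First, the marks, clocks and selections must be independent of the joint initial state, which may be dependent, so that the one-step conditional bound is valid; this holds by hypothesis. Second, Assumption \ref{ass:w1contr} must be applied pointwise in the pre-jump states. This requires the pre-jump states to lie in \(D\) almost surely, which follows by induction over steps from Assumption \ref{ass:inv}: for \(\theta\)-a.e. mark the new state lies in \(D\), and the mark is drawn independently. I expect the main obstacle to be this measurability and null-set bookkeeping rather than the contraction algebra itself.
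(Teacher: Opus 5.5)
Your proposal is correct and, apart from the heuristic generator computation you give first, it is essentially the paper's proof. Both use the one-step bound \(\E[\widehat D_{N,m+1}^{(1)}\mid\mathcal F_m]\le(1-2(1-\ell)/N)\,\widehat D_{N,m}^{(1)}\) for the embedded chain, with \(\theta\circ\iota^{-1}=\theta\) handling the second endpoint, iterate it, and average over the Poisson\((Nt)\) number of updates, which is independent of the chain; the squared case is handled the same way with \(\kappa^2\) in place of \(\ell\).
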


\begin{proof}
Consider the two coupled chains after \(m\) pair updates, and let
\(\Delta_i\) be the difference between their \(i\)-th opinions.
Let \(\mathcal F_m\) be generated by the initial states and these
first \(m\) updates. If the next pair is \(\{i,j\}\), Assumption
\ref{ass:w1contr} and \(\theta\circ\iota^{-1}=\theta\) give
\[
 \E\bigl[|\Delta_i^+|+|\Delta_j^+|
       \mid\mathcal F_m,\ \text{next pair }\{i,j\}\bigr]
 \le\ell\bigl(|\Delta_i|+|\Delta_j|\bigr),
\]
where the superscript \(+\) denotes the differences after the
update. The orientation identity allows the same assumption to be
applied to the endpoint using \(\iota U\).

Let \(\widehat D_{N,m}^{(1)}=N^{-1}\sum_i|\Delta_i|\).
Each agent belongs to the uniformly chosen pair with probability
\(2/N\). Averaging the last estimate over that pair gives
\[
 \E[\widehat D_{N,m+1}^{(1)}\mid\mathcal F_m]
 \le\left(1-\frac{2(1-\ell)}N\right)
       \widehat D_{N,m}^{(1)}.
\]
Iterate this inequality. The total interaction rate is \(N\), so
the number \(J_t\) of updates by time \(t\) is Poisson with mean
\(Nt\), independently of the embedded chain. Averaging over
\(J_t\) yields
\[
 \E D_N^{(1)}(t)
 \le\E\left[\left(1-\frac{2(1-\ell)}N\right)^{J_t}\right]
       \E D_N^{(1)}(0)
 =e^{-2(1-\ell)t}\E D_N^{(1)}(0).
\]

Under Assumption \ref{ass:square}, the pair calculation holds with
squared differences and factor \(\kappa^2\). Thus
\[
 \E[\widehat D_{N,m+1}^{(2)}\mid\mathcal F_m]
 \le\left(1-\frac{2(1-\kappa^2)}N\right)
       \widehat D_{N,m}^{(2)},
\]
where \(\widehat D_{N,m}^{(2)}=N^{-1}\sum_i|\Delta_i|^2\).
Averaging over the same Poisson event count proves \textup{(ii)}.
\end{proof}

\subsection{Stationary laws and stationary chaos}\label{subsec:proof-stationary}

The contraction estimates allow us to construct stationary laws for
both systems. We then compare these laws using a finite population
started from independent samples of the nonlinear stationary law.

Throughout this subsection, assume Assumptions \ref{ass:inv},
\ref{ass:qdrift}, and \ref{ass:w1contr}, and suppose that
\(\mathcal P_{\rho,q}(D)\ne\varnothing\). Write
\[
 M_*:=\frac{(c_b+c_c)\int |b|^q\rho(db)+c_0}{1-a_x-a_y}.
\]
This will bound the stationary opinion moments. The completeness
result used below is given in Appendix
\ref{subsec:technical-completeness}.

\paragraph*{The nonlinear stationary law}
We obtain a fixed point of \(T\) by repeatedly applying the update
to an initial law. Contraction makes the iterates converge, and
the drift bound keeps their moments under control.

\begin{lemma}[Stationary law of the nonlinear equation]\label{lem:mf-relax}
There is a unique stationary law in \(\mathcal P_{\rho,1}(D)\).
This law, denoted by \(F_\infty\), belongs to
\(\mathcal P_{\rho,q}(D)\) and satisfies
\[
 TF_\infty=F_\infty,\qquad
 \int |x|^qF_\infty(dx,db)\le M_*.
\]
For every nonlinear solution started from
\(f_0\in\mathcal P_{\rho,q}(D)\),
\[
 \W_1^\rho(f_t,F_\infty)
 \le e^{-2(1-\ell)t}\W_1^\rho(f_0,F_\infty),
 \qquad t\ge0.
\]
\end{lemma}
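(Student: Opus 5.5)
The plan is a Banach fixed-point argument for \(T\) on \(\mathcal P_{\rho,1}(D)\) with the metric \(\W_1^\rho\), followed by an application of Lemma \ref{lem:nonlinear-contract}.

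\textbf{Invariance and moment control.} First I check that \(T\) maps \(\mathcal P_{\rho,q}(D)\) into itself.
\begin{itemize}
\item Since the updates keep \(b\), \(T\mu\) has anchor marginal \(\rho\).
\item By Assumption \ref{ass:inv} and closedness of \(D\), \(T\mu\) is supported in \(D\).
\item By Assumption \ref{ass:qdrift}, integrating \eqref{eq:qdrift} against \(\mu\otimes\mu\) gives
\[
 \int|x|^q\,T\mu(dx,db)\le a\int|x|^q\,\mu(dx,db)+C,
\]
with \(a=a_x+a_y\) and \(C\) as in Lemma \ref{lem:moment}.
\end{itemize}
Starting from any \(\mu_0\in\mathcal P_{\rho,q}(D)\), which is nonempty by hypothesis, the iterates \(\mu_n=T^n\mu_0\) therefore have \(q\)-th moments bounded by \(\max\{\int|x|^q\,d\mu_0,M_*\}\). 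Moreover, \(\limsup_n\int|x|^q\,d\mu_n\le M_*\).

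\textbf{Convergence of the iterates.} By Lemma \ref{lem:Tcontr}(i), \(\W_1^\rho(\mu_{n+1},\mu_n)\le\ell^n\W_1^\rho(\mu_1,\mu_0)\), so \((\mu_n)\) is Cauchy in \(\W_1^\rho\). The completeness result of Appendix \ref{subsec:technical-completeness} then gives a limit \(F_\infty\in\mathcal P_{\rho,1}(D)\).

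To identify \(F_\infty\) as a fixed point, I use the contraction again:
\[
 \W_1^\rho(TF_\infty,\mu_{n+1})\le\ell\,\W_1^\rho(F_\infty,\mu_n)\to0.
\]
This requires \(TF_\infty\in\mathcal P_{\rho,1}(D)\), which follows from the moment bound once I know \(F_\infty\) has a finite \(q\)-th moment. For that I use lower semicontinuity: \(\W_1^\rho\) convergence implies weak convergence of the joint laws, and \(x\mapsto|x|^q\) is nonnegative and continuous. Hence \(\int|x|^q\,dF_\infty\le\liminf_n\int|x|^q\,d\mu_n\). Starting from \(\mu_0\) with moment at most \(M_*\), or using the limsup bound, gives \(\int|x|^q\,dF_\infty\le M_*\).

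\textbf{Uniqueness.} If \(G\in\mathcal P_{\rho,1}(D)\) also satisfies \(TG=G\), Lemma \ref{lem:Tcontr}(i) gives \(\W_1^\rho(F_\infty,G)\le\ell\,\W_1^\rho(F_\infty,G)\). Since \(\ell<1\), the distance is zero. Here \(TG=G\) already lies in \(\mathcal P_{\rho,1}\), so the lemma applies. I also need \(\W_1^\rho=0\) to imply equality; this holds because the optimal coupling then has \(X=\widetilde X\) a.s.

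\textbf{Relaxation.} The constant path \(g_t\equiv F_\infty\) solves \eqref{eq:nl}, since \(TF_\infty=F_\infty\). By Lemma \ref{lem:moment}, \(f_t\) has uniformly bounded \(q\)-th, hence first, moments. Lemma \ref{lem:nonlinear-contract}(i) then gives the stated estimate.

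\textbf{Main obstacle.} I expect the delicate points to be in the passage to the limit. One is that the \(\W_1^\rho\) limit stays supported on \(D\) with anchor marginal \(\rho\); I take this from the completeness appendix together with closedness of \(D\). The other is the lower semicontinuity of the \(q\)-th moment under \(\W_1^\rho\) convergence, which I handle by truncation, using \(\int\min(|x|^q,K)\,d\mu\) and letting \(K\to\infty\).
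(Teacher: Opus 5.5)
Your proposal is correct and follows the paper's proof essentially step for step. The common route is: iterate \(T\) from some \(\mu_0\in\mathcal P_{\rho,q}(D)\) and get the Cauchy estimate from Lemma \ref{lem:Tcontr}(i); pass to the limit with Lemma \ref{lem:conditional-completeness}; use lower semicontinuity of the \(q\)-th moment to place \(F_\infty\) in \(\mathcal P_{\rho,q}(D)\) before identifying it as a fixed point; prove uniqueness by contraction; and obtain relaxation from Lemmas \ref{lem:moment} and \ref{lem:nonlinear-contract}(i) with \(g_t\equiv F_\infty\). The only minor difference is that you deduce equality from \(\W_1^\rho=0\) through an optimal coupling, whose existence you should justify by tightness of the anchor-preserving couplings. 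The paper instead uses the Lipschitz test-function bound in Lemma \ref{lem:conditional-completeness}, and either argument suffices.
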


\begin{proof}
Choose \(\mu_0\in\mathcal P_{\rho,q}(D)\) and set
\(\mu_{n+1}=T\mu_n\). The iterates remain in
\(\mathcal P_{\rho,q}(D)\). With \(a=a_x+a_y<1\) and
\(M_n=\int |x|^q\mu_n(dx,db)\), the drift bound gives
\[
 M_{n+1}\le aM_n+(1-a)M_*,
 \qquad
 M_n\le a^nM_0+(1-a^n)M_*.
\]
Also, Lemma \ref{lem:Tcontr}\textup{(i)} yields
\[
 \W_1^\rho(\mu_{n+1},\mu_n)
 \le\ell^n\W_1^\rho(\mu_1,\mu_0).
\]
Thus the iterates are Cauchy in \(\mathcal P_{\rho,1}(D)\).
By Lemma \ref{lem:conditional-completeness}, they converge in
\(\W_1^\rho\), and hence weakly, to a law \(F_\infty\) in
this space. Lower semicontinuity gives
\[
 \int |x|^qF_\infty(dx,db)
 \le\liminf_{n\to\infty}M_n\le M_*.
\]
In particular, \(F_\infty\in\mathcal P_{\rho,q}(D)\), so
the contraction of \(T\) applies to \(\mu_n\) and
\(F_\infty\). It follows that
\(T\mu_n\to TF_\infty\) in \(\W_1^\rho\).
Since \(T\mu_n=\mu_{n+1}\to F_\infty\), this proves
\(TF_\infty=F_\infty\).

If \(G\in\mathcal P_{\rho,1}(D)\) is another fixed point,
then
\[
 \W_1^\rho(G,F_\infty)
 =\W_1^\rho(TG,TF_\infty)
 \le\ell\,\W_1^\rho(G,F_\infty),
\]
so \(G=F_\infty\). Finally, Lemmas \ref{lem:moment} and
\ref{lem:nonlinear-contract}\textup{(i)}, with
\(g_t\equiv F_\infty\), give the convergence estimate.
\end{proof}

\paragraph*{The finite stationary law}
For laws on \(D^N\) with anchor marginal \(\rho^{\otimes N}\)
and finite first opinion moment, define
\[
 \mathcal W_{1,N}(\Pi,\widetilde\Pi)
 :=\inf\E\left[\frac1N\sum_{i=1}^N|X_i-\widetilde X_i|\right],
\]
where the infimum is over couplings with the same anchor vector.
Let \((P_t^N)\) be the finite-system semigroup, so that
\(\Pi P_t^N\) is the law at time \(t\) when the initial law is
\(\Pi\).

\begin{lemma}[Stationary law of the finite system]\label{lem:finite-invariant}
For every \(N\ge2\), the finite system has a unique invariant
law \(F_\infty^N\) on \(D^N\) with anchor marginal
\(\rho^{\otimes N}\) and finite first opinion moment. This law
is exchangeable and satisfies
\[
 \int\frac1N\sum_{i=1}^N|x_i|^q\,F_\infty^N(dz)\le M_*.
\]
For every law \(\Pi\) in the same first-moment class,
\[
 \mathcal W_{1,N}(\Pi P_t^N,F_\infty^N)
 \le e^{-2(1-\ell)t}\mathcal W_{1,N}(\Pi,F_\infty^N),
 \qquad t\ge0.
\]
\end{lemma}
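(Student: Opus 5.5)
The plan is to mimic the proof of Lemma~\ref{lem:mf-relax}, replacing the nonlinear map \(T\) by the finite semigroup \(P_t^N\) and the metric \(\W_1^\rho\) by \(\mathcal W_{1,N}\). Fix \(N\). Let \(\mathcal P_{1,N}\) be the class of laws on \(D^N\) with anchor marginal \(\rho^{\otimes N}\) and finite first opinion moment.

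\textbf{Step 1: contraction of the semigroup.} For \(\Pi,\widetilde\Pi\in\mathcal P_{1,N}\), take an anchor-preserving coupling that is nearly optimal for \(\mathcal W_{1,N}(\Pi,\widetilde\Pi)\). Run both systems with common pair clocks and marks, independent of this initial pair. Lemma~\ref{lem:finite-contract}(i) then gives
\[
\mathcal W_{1,N}(\Pi P_t^N,\widetilde\Pi P_t^N)\le e^{-2(1-\ell)t}\mathcal W_{1,N}(\Pi,\widetilde\Pi).
\]
This uses that the anchors are unchanged and that \(D^N\) is preserved, by Assumption~\ref{ass:inv}.

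\textbf{Step 2: moment control.} The argument of Lemma~\ref{lem:moment}, applied to the empirical \(q\)-moment \(N^{-1}\sum_i|X_i|^q\), gives an a priori bound for any initial law \(\Pi\) with \(q\)-moment \(M_0\). Each update of the pair \(\{i,j\}\) changes \(|x_i|^q+|x_j|^q\) in conditional expectation to at most
\[
(a_x+a_y)\bigl(|x_i|^q+|x_j|^q\bigr)+(c_b+c_c)\bigl(|b_i|^q+|b_j|^q\bigr)+2c_0 .
\]
Averaging and using the Poisson count as in Lemma~\ref{lem:finite-contract}, the expected empirical \(q\)-moment at time \(t\) is at most \(e^{-2(1-a)t}M_0+(1-e^{-2(1-a)t})M_*\). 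The anchors contribute \(\int|b|^q\,d\rho\) because their law is \(\rho^{\otimes N}\). The \(q\)-moment class is therefore invariant.

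\textbf{Step 3: existence of \(F_\infty^N\).} Start from \(\Pi_0=f^{\otimes N}\) with \(f\in\mathcal P_{\rho,q}(D)\) and consider \(\Pi_0P_n^N\) for \(n\in\mathbb N\) (the time-one skeleton). Step~1 gives
\[
\mathcal W_{1,N}\bigl(\Pi_0P_{n+1}^N,\Pi_0P_n^N\bigr)\le e^{-2(1-\ell)n}\,\mathcal W_{1,N}\bigl(\Pi_0P_1^N,\Pi_0\bigr),
\]
so the skeleton is Cauchy. We need completeness of \(\mathcal W_{1,N}\) on \(\mathcal P_{1,N}\). This is the conditional Wasserstein distance on \(\R^N\)-valued opinions given the anchor vector, with an \(\ell^1\)-type cost. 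I expect the proof of Lemma~\ref{lem:conditional-completeness} to carry over verbatim with \(\R\) replaced by \(\R^N\), and I regard this as the main technical obstacle. The fallback is to use tightness from the \(q\)-moment bound together with lower semicontinuity of the cost under weak convergence with fixed anchor marginal.

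Call the limit \(F_\infty^N\). Lower semicontinuity and Step~2 give the bound \(M_*\) on its empirical \(q\)-moment. For invariance, first apply Step~1 with \(t=s\) for fixed \(s\): \(\Pi_0P_{n+s}^N\) converges both to \(F_\infty^N P_s^N\) and, by the Cauchy estimate applied to the shifted sequence, to the same limit as \(\Pi_0P_n^N\). Then extend from integer to all \(s\) using the contraction bound. Equivalently, show \(\mathcal W_{1,N}(F_\infty^N P_s^N,F_\infty^N)\le e^{-2(1-\ell)s}\mathcal W_{1,N}(F_\infty^N P_s^N,F_\infty^N)\) after passing to limits, which forces the distance to vanish.

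\textbf{Step 4: uniqueness, exchangeability and convergence.} Uniqueness in \(\mathcal P_{1,N}\) follows because two invariant laws satisfy \(d\le e^{-2(1-\ell)t}d\). Exchangeability holds because the dynamics commute with label permutations (the \(\iota\)-symmetry makes the pair update law order-free) and \(\rho^{\otimes N}\) is exchangeable. A permuted invariant law is therefore again invariant in the same class, and uniqueness forces equality. The convergence estimate is Step~1 with \(\widetilde\Pi=F_\infty^N\).
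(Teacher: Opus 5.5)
Your proposal is correct and follows essentially the paper's route: a synchronous coupling via Lemma~\ref{lem:finite-contract} for the contraction, completeness plus a Cauchy/fixed-point iteration for existence, lower semicontinuity for the \(q\)-moment bound, and relabelling plus uniqueness for exchangeability. The differences are minor. The paper applies Banach's theorem to \(\Pi\mapsto\Pi P_1^N\) on the whole first-moment class, after checking that \(P_t^N\) preserves that class by coupling to the population started from \(F_\infty^{\otimes N}\), and it gets invariance at all times from uniqueness together with \(P_1^NP_s^N=P_s^NP_1^N\); you instead iterate from a single product law and pass to limits, and the completeness you flag as the main obstacle is already the \(N\)-agent statement in Lemma~\ref{lem:conditional-completeness}.
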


\begin{proof}
We first check that \(P_t^N\) preserves this first-moment class.
The reference population started from \(F_\infty^{\otimes N}\)
has its average \(q\)-th opinion moment bounded by \(M_*\),
by Lemma \ref{lem:moment}. Any law \(\Pi\) in the stated class
can be coupled to this product law with identical anchor vectors
and finite expected average absolute difference. Apply common
updates, independently of the joint initial state. Lemma
\ref{lem:finite-contract} bounds this difference at every time,
so the population started from \(\Pi\) also has finite first
opinion moment.

Taking the infimum over initial couplings in the same lemma gives
\[
 \mathcal W_{1,N}(\Pi P_t^N,\widetilde\Pi P_t^N)
 \le e^{-2(1-\ell)t}\mathcal W_{1,N}(\Pi,\widetilde\Pi).
\]
This space is complete by Lemma \ref{lem:conditional-completeness}.
Hence the map \(\Pi\mapsto\Pi P_1^N\) has a unique fixed point,
which we denote by \(F_\infty^N\). For any \(s\ge0\),
the semigroup property gives
\[
 (F_\infty^N P_s^N)P_1^N
 =(F_\infty^N P_1^N)P_s^N=F_\infty^N P_s^N.
\]
Thus \(F_\infty^N P_s^N\) is another fixed point of the same
map, and uniqueness gives \(F_\infty^N P_s^N=F_\infty^N\).
This proves stationarity for all times. The contraction estimate
then gives the stated convergence bound.

The fixed-point iteration started from \(F_\infty^{\otimes N}\)
gives
\(\Pi_n:=F_\infty^{\otimes N}P_n^N\to F_\infty^N\)
in \(\mathcal W_{1,N}\). Its moments satisfy
\[
 \int\frac1N\sum_i|x_i|^q\,\Pi_n(dz)\le M_*,
 \qquad n\ge0.
\]
Convergence in \(\mathcal W_{1,N}\) implies weak convergence.
Lower semicontinuity therefore gives the same bound for
\(F_\infty^N\).

Finally, the dynamics are unchanged in law by relabelling the
agents. If a permutation reverses the order of a pair, the mark
is replaced by \(\iota U\), which has the same law as \(U\).
Every permuted version of \(F_\infty^N\) is consequently an
invariant law in the same class. Uniqueness makes
\(F_\infty^N\) exchangeable.
\end{proof}

\paragraph*{Comparing the stationary laws}
We now start one finite population in stationarity and a second
from \(F_\infty^{\otimes N}\). Contraction brings the two
populations close. The finite-time approximation compares the
second population with its nonlinear limit, whose law remains
\(F_\infty\).

\begin{lemma}[Stationary propagation of chaos]\label{lem:stationary-chaos}
For every fixed \(k\ge1\),
\[
 W_{1,k}\bigl((F_\infty^N)^{(k)},F_\infty^{\otimes k}\bigr)
 \longrightarrow0\qquad\text{as }N\to\infty,
\]
where \((F_\infty^N)^{(k)}\) is the marginal on the first
\(k\) coordinates.
\end{lemma}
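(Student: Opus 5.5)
We follow the strategy outlined in Section~\ref{sec:strategy}. Fix \(k\) and, for \(N\ge2\), run two finite systems on a common probability space. The first is started from \(F_\infty^N\); the second is started from \(F_\infty^{\otimes N}\). Both use the same anchor vector, pair clocks and marks, with the clocks and marks independent of the joint initial state. For the initial coupling we take an anchor-preserving coupling of \(F_\infty^N\) and \(F_\infty^{\otimes N}\) with finite \(\E D_N^{(1)}(0)\). The two laws have the same anchor marginal \(\rho^{\otimes N}\). By Lemma~\ref{lem:finite-invariant} and Lemma~\ref{lem:mf-relax}, the average \(q\)-th moment of each is bounded by \(M_*\). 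Conditioning on the anchors then gives
\[
 \E D_N^{(1)}(0)\le 2M_*^{1/q}.
\]
We symmetrize the coupling by a uniform random permutation of the labels. Both laws and the dynamics are exchangeable (the latter via \(\theta\circ\iota^{-1}=\theta\)), so this is legitimate, and afterwards the pairs \((Z_i^N,\widetilde Z_i^N)\) are exchangeable.

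The first system stays at \(F_\infty^N\) for all times. By Lemma~\ref{lem:finite-contract}(i) and exchangeability, at time \(s\) the first system has \(k\)-marginal \((F_\infty^N)^{(k)}\), the second has \(\Law(\widetilde Z_1^N(s),\ldots,\widetilde Z_k^N(s))\), and
\[
 \E\sum_{i=1}^k|X_i^N(s)-\widetilde X_i^N(s)|
 =k\,\E D_N^{(1)}(s)\le 2kM_*^{1/q}e^{-2(1-\ell)s}.
\]
The anchors agree coordinatewise, so this gives the bound
\[
 W_{1,k}\bigl((F_\infty^N)^{(k)},\Law(\widetilde Z^N_{1..k}(s))\bigr)\le 2kM_*^{1/q}e^{-2(1-\ell)s}.
\]

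For the second system, the nonlinear solution started from \(F_\infty\) is stationary: the constant path \(f_t\equiv F_\infty\) solves \eqref{eq:mild} because \(TF_\infty=F_\infty\), and uniqueness holds by Proposition~\ref{prop:wellposed}. Theorem~\ref{thm:finite-time-main} then gives
\[
 \bigl\|\Law(\widetilde Z^N_{1..k}(s))-F_\infty^{\otimes k}\bigr\|_{\TV}\le \frac{k(k+1)(e^{4s}-1)}{N-1}.
\]
To convert this to \(W_{1,k}\) we use Lemma~\ref{lem:interp} with \(p=1\) and \(r=q\). The \(q\)-th moments of \(d_{1,k}(z,0)\) under both laws are bounded uniformly in \(N\) and \(s\). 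This uses Lemma~\ref{lem:moment} with initial law \(F_\infty\), which makes \(K=M_*\), together with the anchor moment \(\int|b|^q\rho(db)<\infty\) and the elementary bound \((\sum_{j=1}^{2k} a_j)^q\le (2k)^{q-1}\sum_j a_j^q\). This yields a bound of the form
\[
 W_{1,k}\le C\,(e^{4s}/N)^{1-1/q}.
\]

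Combining the two estimates by the triangle inequality and choosing \(s=s_N=a\log N\) with \(0<a<1/4\), both terms vanish as \(N\to\infty\): the first because \(s_N\to\infty\), the second because \(e^{4s_N}/N=N^{4a-1}\to0\). We expect the main obstacle to be the construction of the initial coupling. It must preserve anchors, have finite first moment, and be independent of the clocks and marks, and exchangeability must be arranged so that the averaged contraction bound transfers to the first \(k\) coordinates. A measurable choice of conditional couplings given the anchor vector handles this, and the gluing lemma gives a valid joint law.
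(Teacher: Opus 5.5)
Your proposal is correct and follows essentially the same argument as the paper. Both use an exchangeable, anchor-preserving coupling of \(F_\infty^N\) with \(F_\infty^{\otimes N}\), run under common clocks and marks. Lemma~\ref{lem:finite-contract} controls the contraction term, Theorem~\ref{thm:finite-time-main} with Lemmas~\ref{lem:moment} and~\ref{lem:interp} controls the chaos term, and both take \(s_N=a\log N\) with \(a<1/4\), where the paper specifically uses \(a=1/8\).
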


\begin{proof}
Couple \(F_\infty^N\) and \(F_\infty^{\otimes N}\) with
identical anchor vectors, using their conditional laws given the
common anchor vector. Average this coupling over simultaneous
permutations of the agents. Both marginals are exchangeable, so
this preserves the marginals and equality of anchors, and makes
the coupling exchangeable.

Run the two populations with common pair clocks and marks,
independently of their joint initial state. Denote their states by
\(Z^N(t)\) and \(\widetilde Z^N(t)\), respectively. The common
updates preserve the exchangeability of the coupling, by the same
relabelling argument used above. The stationary moment bounds give
\[
 \E D_N^{(1)}(0)\le2M_*^{1/q}.
\]
For \(N\ge\max\{2,k\}\), Lemma \ref{lem:finite-contract}
and equality of the anchors therefore yield
\[
\begin{aligned}
 &W_{1,k}\bigl((F_\infty^N)^{(k)},
     \Law(\widetilde Z_1^N(s),\ldots,\widetilde Z_k^N(s))\bigr)\\
 &\qquad\le k\E D_N^{(1)}(s)
 \le2kM_*^{1/q}e^{-2(1-\ell)s}.
\end{aligned}
\]

The second population starts from a product law. Theorem
\ref{thm:finite-time-main} applies, and its nonlinear law is
\(F_\infty\) at every time. Lemmas \ref{lem:moment} and
\ref{lem:interp}, together with the anchor moment, then give
\[
 W_{1,k}\bigl(\Law(\widetilde Z_1^N(s),\ldots,
     \widetilde Z_k^N(s)),F_\infty^{\otimes k}\bigr)
 \le C_k\eps_{N,k}(s)^{1-1/q},
\]
where \(C_k\) is independent of \(s\) and \(N\). Combining
the two estimates gives, for every \(s\ge0\),
\[
 W_{1,k}\bigl((F_\infty^N)^{(k)},F_\infty^{\otimes k}\bigr)
 \le2kM_*^{1/q}e^{-2(1-\ell)s}
     +C_k\eps_{N,k}(s)^{1-1/q}.
\]
Take \(s_N=\tfrac18\log N\). Then \(s_N\to\infty\) and
\[
 \eps_{N,k}(s_N)
 =\frac{k(k+1)(N^{1/2}-1)}{N-1}\longrightarrow0.
\]
Both terms vanish, proving the lemma.
\end{proof}

The particular exponent in the graphical estimate is not essential
here. The same argument works with a finite-time total-variation bound
of the form \(C_ke^{cs}/N\), where \(c>0\) is fixed: take
\(s_N=a\log N\) with \(0<a<1/c\).

\subsection{Proof of the main theorem}\label{subsec:proof-main}

\begin{proof}[Proof of Theorem \ref{thm:main-w1}]
Part~\textup{(i)} is given by Lemma \ref{lem:mf-relax}.

\emph{Uniformity in time.}
Fix \(k\ge1\) and take \(N\ge\max\{2,k\}\).
For each fixed \(T>0\), Theorem \ref{thm:finite-time-main}
and Lemmas \ref{lem:moment} and \ref{lem:interp} give
\[
 \sup_{0\le t\le T}
 W_{1,k}\bigl(\Law(Z_1^N(t),\ldots,Z_k^N(t)),f_t^{\otimes k}\bigr)
 \le C_{k,f_0}\eps_{N,k}(T)^{1-1/q}
 \longrightarrow0.
\]
Here and below, constants are independent of \(N,t,T\).
The moment bound needed for interpolation includes both the opinions,
by Lemma \ref{lem:moment}, and the anchors, whose common law
\(\rho\) has a finite \(q\)-th moment.

For later times, we compare the two systems through their stationary
laws. Couple \(f_0^{\otimes N}\) and \(F_\infty^N\) with the
same anchor vector and average over simultaneous permutations of
the agents, as in the proof of Lemma \ref{lem:stationary-chaos}.
Run the populations with common pair clocks and marks, independently
of their joint initial state. This preserves the exchangeability
of the coupling. The initial discrepancy satisfies
\[
 \E D_N^{(1)}(0)
 \le \left(\int |x|^qf_0(dx,db)\right)^{1/q}+M_*^{1/q},
\]
where \(M_*\) is the stationary moment bound from the preceding
subsection. Lemma \ref{lem:finite-contract} therefore gives
\[
 W_{1,k}\bigl(\Law(Z_1^N(t),\ldots,Z_k^N(t)),
                  (F_\infty^N)^{(k)}\bigr)
 \le k\E D_N^{(1)}(t)
 \le C_{k,f_0}e^{-2(1-\ell)t}.
\]
For the nonlinear law, taking \(k\) independent copies of a coupling
with equal anchors and using part~\textup{(i)} gives
\[
 W_{1,k}(f_t^{\otimes k},F_\infty^{\otimes k})
 \le k\W_1^\rho(f_t,F_\infty)
 \le C_{k,f_0}e^{-2(1-\ell)t}.
\]
The triangle inequality now yields
\[
\begin{aligned}
 &\sup_{t\ge T}
 W_{1,k}\bigl(\Law(Z_1^N(t),\ldots,Z_k^N(t)),f_t^{\otimes k}\bigr)\\
 &\qquad\le C_{k,f_0}e^{-2(1-\ell)T}
       +W_{1,k}\bigl((F_\infty^N)^{(k)},F_\infty^{\otimes k}\bigr).
\end{aligned}
\]
By Lemma \ref{lem:stationary-chaos}, the last term tends to zero
as \(N\to\infty\). Combining this with the bound on \([0,T]\),
we obtain
\[
 \limsup_{N\to\infty}\sup_{t\ge0}
 W_{1,k}\bigl(\Law(Z_1^N(t),\ldots,Z_k^N(t)),f_t^{\otimes k}\bigr)
 \le C_{k,f_0}e^{-2(1-\ell)T}.
\]
Letting \(T\to\infty\) proves part~\textup{(ii)}.

\emph{Empirical averages.}
Let \(\varphi:E\to\R\) be bounded and Lipschitz, and write
\[
 \bar\varphi_t=\langle\varphi,f_t\rangle,
 \qquad
 \mathcal E_N(t)=\langle\varphi,\mu_t^N\rangle-\bar\varphi_t.
\]
We estimate its second moment directly. Set
\[
 \psi_t(z_1,z_2)
   =(\varphi(z_1)-\bar\varphi_t)(\varphi(z_2)-\bar\varphi_t).
\]
Then \(\int\psi_t\,df_t^{\otimes2}=0\), and the Lipschitz
constant of \(\psi_t\) for \(d_{1,2}\) is at most
\(2\|\varphi\|_\infty\Lip(\varphi)\), uniformly in \(t\).
Exchangeability gives, for \(N\ge2\),
\[
\begin{aligned}
 \E|\mathcal E_N(t)|^2
 &=\frac1N\E(\varphi(Z_1^N(t))-\bar\varphi_t)^2
   +\frac{N-1}{N}\E\psi_t(Z_1^N(t),Z_2^N(t))\\
 &\le\frac{4\|\varphi\|_\infty^2}{N}
   +2\|\varphi\|_\infty\Lip(\varphi)
      W_{1,2}\bigl(\Law(Z_1^N(t),Z_2^N(t)),f_t^{\otimes2}\bigr).
\end{aligned}
\]
Part~\textup{(ii)} with \(k=2\) and Cauchy--Schwarz imply
\(\sup_{t\ge0}\E|\mathcal E_N(t)|\to0\), proving
\eqref{eq:emp-ft}. Finally, part~\textup{(i)} yields
\[
\begin{aligned}
 &\sup_{t\ge T}\E\left|
   \langle\varphi,\mu_t^N\rangle-\langle\varphi,F_\infty\rangle
   \right|\\
 &\qquad\le\sup_{t\ge0}\E|\mathcal E_N(t)|
   +\Lip(\varphi)\W_1^\rho(f_0,F_\infty)e^{-2(1-\ell)T}.
\end{aligned}
\]
Taking the upper limit as \(N\to\infty\) and then letting
\(T\to\infty\) proves \eqref{eq:emp-long} and completes the proof.
\end{proof}

\subsection{Proof of the Wasserstein--2 corollary}\label{subsec:proof-w2}

\begin{proof}[Proof of Corollary \ref{cor:w2}]
We follow the preceding proof, using contraction in mean square.
We first construct stationary laws under the assumptions of the
corollary.

\emph{Stationary laws and convergence.}
Start with \(\mu_0=f_0\) and set \(\mu_{n+1}=T\mu_n\).
On a bounded domain, the moments of these iterates are bounded
automatically. On an unbounded domain, \eqref{eq:T-moment-drift}
with \(q=r\) bounds their \(r\)-th moments uniformly in \(n\).
Lemma \ref{lem:Tcontr}\textup{(ii)} gives
\[
 \W_2^\rho(\mu_{n+1},\mu_n)
 \le\kappa^n\W_2^\rho(\mu_1,\mu_0).
\]
The space \(\mathcal P_{\rho,2}(D)\) is complete by Lemma
\ref{lem:conditional-completeness}. Thus the iterates converge
in \(\W_2^\rho\) to a law \(F_\infty\). As in the proof of
Lemma \ref{lem:mf-relax}, lower semicontinuity preserves the
moment bound, and contraction of \(T\) gives
\(TF_\infty=F_\infty\). The fixed point is unique in
\(\mathcal P_{\rho,2}(D)\). The second moments of \(f_t\)
are uniformly bounded; on an unbounded domain this follows from
Lemma \ref{lem:moment} with \(q=r\). Lemma
\ref{lem:nonlinear-contract}\textup{(ii)} now yields
\[
 \W_2^\rho(f_t,F_\infty)
 \le e^{-(1-\kappa^2)t}\W_2^\rho(f_0,F_\infty).
\]

For laws on \(D^N\) with anchor marginal \(\rho^{\otimes N}\)
and finite second opinion moment, use the distance
\[
 \mathcal W_{2,N}(\Pi,\widetilde\Pi)^2
 :=\inf\E\left[\frac1N\sum_{i=1}^N|X_i-\widetilde X_i|^2\right],
\]
where the anchors agree coordinatewise in the coupling.
Coupling to a population started from \(F_\infty^{\otimes N}\)
shows that \(P_t^N\) preserves this moment class: the reference
population has bounded second moments, and Lemma
\ref{lem:finite-contract}\textup{(ii)} controls the squared
difference. Taking the infimum over initial couplings gives
\[
 \mathcal W_{2,N}(\Pi P_t^N,\widetilde\Pi P_t^N)
 \le e^{-(1-\kappa^2)t}\mathcal W_{2,N}(\Pi,\widetilde\Pi).
\]
By completeness, \(\Pi\mapsto\Pi P_1^N\) has a unique fixed
point, denoted by \(F_\infty^N\). The semigroup and relabelling
arguments in the proof of Lemma \ref{lem:finite-invariant} give
stationarity at every time and exchangeability.

On an unbounded domain, apply Lemma \ref{lem:moment} with
\(q=r\) to the population started from \(F_\infty^{\otimes N}\).
Its normalized \(r\)-th opinion moment is bounded independently
of time and \(N\). Passing to the invariant limit by lower
semicontinuity gives the same bound for \(F_\infty^N\).
On a bounded domain, this control is automatic. In particular,
in either case there is a constant \(M_2<\infty\), independent
of \(N\), such that
\[
 \int|x|^2F_\infty(dx,db)\le M_2,
 \qquad
 \int\frac1N\sum_i|x_i|^2\,F_\infty^N(dz)\le M_2.
\]

\emph{Approximation on finite intervals.}
Write
\[
 \delta:=
 \begin{cases}
  1/2, & D\text{ bounded},\\
  1/2-1/r, & D\text{ unbounded}.
 \end{cases}
\]
In both cases, \(\delta>0\). For fixed \(k\) and
\(N\ge\max\{2,k\}\), Theorem \ref{thm:finite-time-main}
and Lemma \ref{lem:interp} give
\[
 \sup_{0\le t\le T}
 W_{2,k}\bigl(\Law(Z_1^N(t),\ldots,Z_k^N(t)),f_t^{\otimes k}\bigr)
 \le C_{k,f_0}\eps_{N,k}(T)^\delta.
\]
The constant is independent of \(N\) and \(T\). On an unbounded
domain, the required \(r\)-th moments of the opinions and anchors
follow from Lemma \ref{lem:moment} and the assumed anchor moment. The same estimate
applies with initial law \(F_\infty^{\otimes N}\), whose
nonlinear law remains \(F_\infty\).

\emph{Stationary chaos and uniformity in time.}
Use the exchangeable coupling of \(F_\infty^N\) and
\(F_\infty^{\otimes N}\) from the proof of Lemma
\ref{lem:stationary-chaos}, with equal anchors and common clocks
and marks independent of the joint initial state. Its initial
squared discrepancy satisfies \(\E D_N^{(2)}(0)\le4M_2\).
Writing \(\widetilde Z^N\) for the population started from
\(F_\infty^{\otimes N}\), Lemma
\ref{lem:finite-contract}\textup{(ii)} gives
\[
\begin{aligned}
 &W_{2,k}\bigl((F_\infty^N)^{(k)},
       \Law(\widetilde Z_1^N(s),\ldots,\widetilde Z_k^N(s))\bigr)\\
 &\qquad\le\bigl(k\E D_N^{(2)}(s)\bigr)^{1/2}
 \le2\sqrt{kM_2}\,e^{-(1-\kappa^2)s}.
\end{aligned}
\]
Combining this with the finite-time estimate yields
\[
 W_{2,k}\bigl((F_\infty^N)^{(k)},F_\infty^{\otimes k}\bigr)
 \le2\sqrt{kM_2}\,e^{-(1-\kappa^2)s}
       +C_k\eps_{N,k}(s)^\delta.
\]
Take \(s_N=\tfrac18\log N\). Then \(s_N\to\infty\) and
\(\eps_{N,k}(s_N)\to0\), so the stationary marginals converge
in \(W_{2,k}\).

Finally, couple the population started from \(f_0^{\otimes N}\)
to a stationary copy in the same way. Their initial expected
squared discrepancy is bounded independently of \(N\), so
their first \(k\) coordinates are within
\(C_{k,f_0}e^{-(1-\kappa^2)t}\) in \(W_{2,k}\).
Also,
\[
 W_{2,k}(f_t^{\otimes k},F_\infty^{\otimes k})
 \le\sqrt{k}\,\W_2^\rho(f_t,F_\infty).
\]
The time split in the preceding proof therefore gives, for every
\(T>0\),
\[
 \limsup_{N\to\infty}\sup_{t\ge0}
 W_{2,k}\bigl(\Law(Z_1^N(t),\ldots,Z_k^N(t)),f_t^{\otimes k}\bigr)
 \le C_{k,f_0}e^{-(1-\kappa^2)T}.
\]
Letting \(T\to\infty\) proves the corollary.
\end{proof}

\subsection{Proofs for the two applications}
\label{subsec:proof-applications}
\label{subsec:proof-tail}
\label{subsec:proof-modal}

\paragraph*{Stationary tails}
\begin{proof}[Proof of Theorem \ref{thm:tail}]
We first check the assumptions of the main theorem on \(D=E\).
The estimate
\[
 |\Phi(x,y;b,c,U)-\Phi(x',y';b,c,U)|
 \le A|x-x'|+\Lip(H)|y-y'|
\]
gives Assumption \ref{ass:w1contr} with
\(\ell_x=\E A\) and \(\ell_y=\Lip(H)\).
For the moment bound, write
\(\Phi=Ax+(1-A)b+H(y)+\sigma\xi\). Young's inequality and
boundedness of \(H\) give, for every \(\eps>0\),
\[
\begin{aligned}
 \E|\Phi(x,y;b,c,U)|^{q_0}
 &\le(1+\eps)\E A^{q_0}|x|^{q_0}\\
 &\quad+C_\eps\bigl(
      \E|1-A|^{q_0}|b|^{q_0}+1+\E|\xi|^{q_0}\bigr).
\end{aligned}
\]
Choose \(\eps\) so that
\((1+\eps)\E A^{q_0}<1\). This gives Assumption
\ref{ass:qdrift} with \(a_y=0\), and Theorem
\ref{thm:main-w1} gives the stationary law \(F_\infty\).

Let \(R=X-B\) under this law, and let \(\nu_b\) be the
conditional law of \(R\) given \(B=b\). One update sends \(R\) to
\[
 R^+=AR+Q,\qquad Q=H(Y)+\sigma\xi,
\]
where \(Y\sim m_\infty\), and \(Y\) and \((A,\xi)\) are
independent of each other and of \((R,B)\). This affine kernel
does not depend on \(b\). It preserves finite first moments and
contracts the usual \(W_1\) distance on \(\R\) by the factor
\(\E A<1\), so it has at most one invariant law with finite first
moment.

Both \(X\) and \(B\) have finite \(q_0\)-th moments.
Disintegrating \(TF_\infty=F_\infty\) therefore gives, for
\(\rho\)-almost every \(b\), an invariant law \(\nu_b\)
with finite first moment for this same affine kernel.
Uniqueness makes these conditional laws agree. Thus \(R\) is
independent of \(B\), and taking an independent copy \(R'\)
before the update gives \eqref{eq:R-sre} with the stated
independence properties.

\emph{Sub-Gaussian tails.}
For part~\textup{(i)}, choose \(a\in(0,1)\) with \(A\le a\)
almost surely. Let \((A_n,Q_n)\) be independent copies of
\((A,Q)\), and consider
\[
 S:=\sum_{n\ge1}\left(\prod_{j=1}^{n-1}A_j\right)Q_n.
\]
The series converges absolutely almost surely and in \(L^1\),
since its absolute terms are bounded by \(a^{n-1}|Q_n|\).
Splitting off the first term gives \(S=Q_1+A_1S'\), where
\(S'\) has the same law as \(S\) and is independent of
\((A_1,Q_1)\). Its law is therefore invariant for the affine
update, and uniqueness gives \(S\stackrel d=R\).

Boundedness of \(H\) and the sub-Gaussian tails of \(\xi\)
give \(\E e^{cQ^2}<\infty\) for some \(c>0\).
Put \(w_n=(1-a)a^{n-1}\), so that \(\sum_nw_n=1\).
Since \((1-a)|S|\le\sum_nw_n|Q_n|\), Jensen's inequality gives
\[
 \E e^{c(1-a)^2R^2}
 =\E e^{c(1-a)^2S^2}
 \le\sum_{n\ge1}w_n\E e^{cQ_n^2}
 =\E e^{cQ^2}<\infty.
\]
Thus \(R\) has sub-Gaussian tails. If \(B\) also has
sub-Gaussian tails, the bound \(|X|\le|B|+|R|\) gives the
same conclusion for \(X\).

\emph{Power-law tails.}
For part~\textup{(ii)}, boundedness of \(H\) and the noise assumption give
\[\E|Q|^{\alpha+\eta}<\infty.\]
The assumptions on \(A\), together with this moment bound,
allow us to apply Goldie's implicit renewal theorem
\citep[Theorem~4.1]{Goldie1991} to \eqref{eq:R-sre}. It gives the two
one-sided limits in the statement, with \(C_+,C_-\ge0\).
The non-degeneracy assumption ensures \(C_++C_->0\).
Adding the two limits gives the asserted tail asymptotic for
\(|R|\).

If \(\E A^2<1\) and \(\alpha\le2\), then
\[
 1=\E A^\alpha\le(\E A^2)^{\alpha/2}<1,
\]
a contradiction. Hence \(\alpha>2\), and the tail asymptotic
gives \(\E R^2<\infty\). If \(B\) has finite variance as well,
then \(X=B+R\) has finite variance.
\end{proof}

\paragraph*{Stability of the neutral opinion}
\begin{proof}[Proof of Theorem \ref{thm:modal}]
The symmetry \(G_\gamma(1-x)=1-G_\gamma(x)\) gives
\(G_\gamma(1/2)=1/2\). Differentiating \eqref{eq:Agamma}
at \(x=e=1/2\) yields
\[
 G_\gamma'(1/2)
 =(1-\beta)\frac{s+\gamma2^{-\gamma}}{s+2^{-\gamma}}
 =\Lambda(\gamma)\ge0.
\]
This proves \eqref{eq:Lambdagamma}. Since the derivative is
nonnegative, the usual derivative criterion gives local attraction
when \(\Lambda(\gamma)<1\) and local repulsion when
\(\Lambda(\gamma)>1\).

Assume now that \(\Lambda(\gamma)>1\), and put
\(H_\gamma(x)=G_\gamma(x)-x\). We have
\(H_\gamma(1/2)=0\) and \(H_\gamma'(1/2)>0\), so
\(H_\gamma(x)>0\) just to the right of \(1/2\).
On the other hand,
\[
 H_\gamma(1)=G_\gamma(1)-1=-\frac\beta2<0.
\]
The intermediate value theorem gives a fixed point
\(x_+(\gamma)\in(1/2,1)\). Reflection gives the second fixed
point \(x_-(\gamma)=1-x_+(\gamma)\in(0,1/2)\).
\end{proof}

\begin{appendix}

\section{Classical models and boundary cases}\label{app:classical}

We first give a general way to check contraction, then apply the
assumptions to several familiar rules. The examples at the boundary
\(\ell=1\) show that the behavior over long times can depend on
more than the contraction coefficient. We suppress arguments of
\(\Phi\) that a particular rule does not use.

\paragraph*{A derivative criterion}
For each anchor \(b\), let \(I_b=\{x:(x,b)\in D\}\), and
suppose every nonempty \(I_b\) is an interval. For each \(b,c\)
with nonempty \(I_b,I_c\), assume that, outside a
\(\theta\)-null set of marks, \(\Phi(\cdot,\cdot;b,c,u)\)
is continuously differentiable on a neighborhood of \(I_b\times I_c\).
Set
\[
\begin{aligned}
 \ell_x&:=\sup_{(x,b),(y,c)\in D}
       \int_U|\partial_x\Phi(x,y;b,c,u)|\theta(du),\\
 \ell_y&:=\sup_{(x,b),(y,c)\in D}
       \int_U|\partial_y\Phi(x,y;b,c,u)|\theta(du).
\end{aligned}
\]
Integrating each partial derivative along its coordinate segment
and then averaging over the mark gives \eqref{eq:w1contr}.
Thus \(\ell_x+\ell_y<1\) is sufficient for Assumption
\ref{ass:w1contr}. The domain and moment assumptions must also
be checked, as in the examples below.

\subsection{Anchored linear rules}\label{subsec:linear-rules}

Consider an anchored linear rule motivated by the
Friedkin--Johnsen model \citep{Friedkin1990,FriedkinJohnsen2011},
with random coefficients:
\[
 \Phi(x,y;b,c,U)
   =(1-\beta_U-\omega_U)x+\omega_Uy+\beta_Ub.
\]
For real coefficients, Assumption \ref{ass:w1contr} holds if
\[
 \E|1-\beta_U-\omega_U|+\E|\omega_U|<1.
\]
To check the remaining assumptions, consider the convex case
\[
 \beta_U,\omega_U\ge0,\qquad
 \beta_U+\omega_U\le1\quad\text{almost surely}.
\]
The coefficient sum above is then \(1-\E\beta_U\), so positive
anchoring on average gives strict contraction. The full space
\(E\) is invariant, as is \(I\times I\) for any closed
interval \(I\) containing the opinions and anchors.

For \(q>1\), convexity gives
\[
 |\Phi(x,y;b,c,U)|^q
 \le(1-\beta_U-\omega_U)|x|^q
     +\omega_U|y|^q+\beta_U|b|^q.
\]
If \(\rho\) has a finite \(q\)-th moment and
\(\E\beta_U>0\), Assumption \ref{ass:qdrift} therefore holds
with
\[
 a_x=\E(1-\beta_U-\omega_U),\qquad
 a_y=\E\omega_U,\qquad c_b=\E\beta_U,
 \qquad a_x+a_y=1-\E\beta_U<1,
\]
and \(c_c=c_0=0\). Theorem \ref{thm:main-w1} then applies
to every \(f_0\in\mathcal P_{\rho,q}(E)\).

The same anchoring condition also gives contraction in mean square.
With \(a_U=1-\beta_U-\omega_U\), weighted Cauchy--Schwarz gives,
for \(r,s\in\R\),
\[
 |a_Ur+\omega_Us|^2
 \le(1-\beta_U)(a_Ur^2+\omega_Us^2).
\]
Thus Assumption \ref{ass:square} holds with
\[
 L_x^2=\E[(1-\beta_U)(1-\beta_U-\omega_U)],\qquad
 L_y^2=\E[(1-\beta_U)\omega_U],
\]
and \(\kappa^2=\E(1-\beta_U)^2<1\) whenever
\(\E\beta_U>0\). Corollary \ref{cor:w2} applies on \(E\)
if the initial opinions and anchors have finite moments of some
order \(r>2\). It also applies on a bounded invariant domain
\(I\times I\), without an additional moment assumption.

\subsection{Smooth bounded-confidence rules}
\label{subsec:smooth-confidence}

Let \(0<\beta\le1\) and consider
\[
 \Phi(x,y;b,c)=(1-\beta)x+\beta b+\psi(y-x),
\]
where \(\psi\) is bounded and \(L\)-Lipschitz. This includes
smooth confidence cutoffs; the estimates below do not require
differentiability. Directly,
\[
 |\Phi(x,y;b,c)-\Phi(x',y';b,c)|
 \le(1-\beta+L)|x-x'|+L|y-y'|.
\]
Hence \(2L<\beta\) is sufficient for Assumption
\ref{ass:w1contr}.

Write \(M=\|\psi\|_\infty\). For \(R\ge M/\beta\), the
closed strip
\[
 D_R:=\{(x,b):|x-b|\le R\}
\]
is invariant, since
\[
 |\Phi(x,y;b,c)-b|
 \le(1-\beta)|x-b|+M\le R.
\]
For \(q>1\), on this domain,
\[
 |\Phi(x,y;b,c)|^q\le2^{q-1}(|b|^q+R^q).
\]
Thus a finite \(q\)-th anchor moment gives Assumption
\ref{ass:qdrift} with \(a_x=a_y=c_c=0\),
\(c_b=2^{q-1}\), and \(c_0=2^{q-1}R^q\).
Theorem \ref{thm:main-w1} applies on \(D_R\) whenever
\(2L<\beta\) and \(f_0\in\mathcal P_{\rho,q}(D_R)\).

The Lipschitz estimate above also gives contraction in mean square.
With \(\ell=1-\beta+2L<1\), weighted Cauchy--Schwarz yields
Assumption \ref{ass:square} with
\[
 L_x^2=\ell(1-\beta+L),\qquad L_y^2=\ell L,
 \qquad\kappa^2=\ell^2<1.
\]
Corollary \ref{cor:w2} therefore applies if the anchors have a
finite moment of some order \(r>2\); on \(D_R\), this also
bounds the initial opinion moment. If \(\rho([-K,K])=1\) for
some \(K<\infty\), one can instead work on the bounded
invariant domain \(D_R\cap(\R\times[-K,K])\).

\subsection{A critical pair-consensus rule}
\label{subsec:critical-consensus}

In this example, both agents adopt one of the two opinions present
at an interaction, chosen with equal probability. This is a version
of neutral copying dynamics \citep{CliffordSudbury1973,HolleyLiggett1975}.
Take \(D=[0,1]\times\{0\}\), \(\rho=\delta_0\), and a
mark \(U\) with \(\Pp(U=-1)=\Pp(U=+1)=1/2\). With
\(\iota=\mathrm{Id}\), set
\begin{equation}\label{eq:critical-consensus-rule}
 \Phi(x,y;0,0,U)=
 \begin{cases}
  x\wedge y,&U=-1,\\
  x\vee y,&U=+1.
 \end{cases}
\end{equation}
\begin{proposition}[A critical counterexample]
\label{prop:critical-consensus-sharpness}
The rule \eqref{eq:critical-consensus-rule} satisfies Assumptions
\ref{ass:inv} and \ref{ass:qdrift} for every \(q>1\).
It satisfies the non-strict version of Assumption
\ref{ass:w1contr} with \(\ell_x=\ell_y=1/2\), but cannot
satisfy \eqref{eq:w1contr} with \(\ell<1\).

Let \(m_0\in\mathcal P([0,1])\) be a non-Dirac opinion law
and put \(f_0=m_0\otimes\delta_0\). The nonlinear solution
is stationary: \(f_t=f_0\) for all \(t\ge0\).
Start the finite system from \(f_0^{\otimes N}\). It reaches
consensus almost surely, with final opinion of law \(m_0\), and
\begin{equation}\label{eq:critical-consensus-failure}
\begin{aligned}
 &\liminf_{N\to\infty}\sup_{t\ge0}
 W_{1,2}\bigl(\Law(Z_1^N(t),Z_2^N(t)),f_t^{\otimes2}\bigr)\\
 &\qquad\ge\iint_{[0,1]^2}|x-y|\,m_0(dx)m_0(dy)>0.
\end{aligned}
\end{equation}
Consequently, the main theorem is false in general if
\(\ell<1\) is replaced by \(\ell\le1\).
\end{proposition}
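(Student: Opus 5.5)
The plan has four parts: check the assumptions, show the nonlinear law is stationary, prove consensus in the finite system, and deduce the lower bound.

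\emph{Assumptions.} Invariance of \(D=[0,1]\times\{0\}\) is immediate, since \(x\wedge y\) and \(x\vee y\) lie in \([0,1]\) and the anchor stays \(0\). For the drift bound, note that \(|x\wedge y|^q+|x\vee y|^q=|x|^q+|y|^q\). Averaging over \(U\) gives \eqref{eq:qdrift} with \(a_x=a_y=1/2\) and \(c_0=0\). This does not give \(a_x+a_y<1\). However, on the bounded domain we may instead take \(a_x=a_y=0\) and \(c_0=1\), since \(|\Phi|^q\le1\), so Assumption \ref{ass:qdrift} holds. For the non-strict contraction, \(x\mapsto x\wedge y\) and \(x\mapsto x\vee y\) are \(1\)-Lipschitz in each variable, and \(|\min(x,y)-\min(x',y')|+|\max(x,y)-\max(x',y')|\le|x-x'|+|y-y'|\). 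Averaging over \(U\) gives \(\ell_x=\ell_y=1/2\). To rule out \(\ell<1\), take \(x=x'\) and \(y=y'+h\) with \(y'>x\), and compare with the symmetric choice with the roles of \(x\) and \(y\) exchanged. For \(x<y<y'\), only the max branch moves, so the expected difference is \(\tfrac12|y-y'|\), which forces \(\ell_y\ge1/2\). Symmetrically, \(\ell_x\ge1/2\), so \(\ell\ge1\).

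\emph{Stationarity.} If \(X,Y\) are i.i.d. with law \(m_0\), the output \(\min(X,Y)\) or \(\max(X,Y)\), chosen with probability \(1/2\) each, has distribution function \(\tfrac12\bigl(1-(1-F)^2\bigr)+\tfrac12F^2=F\). Hence \(Tf_0=f_0\). The constant path then solves \eqref{eq:nl}, and Proposition \ref{prop:wellposed} gives \(f_t=f_0\).

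\emph{Consensus.} In the finite system the number of distinct opinion values never increases, because each interaction replaces the two values by one of them. Whenever at least two distinct values are present, some pair with different values rings at positive rate, and with probability \(1/2\) the number of agents holding the current minimum drops. The process is a finite-state chain on configurations drawn from the initial values, and its absorbing states are the consensus states. Every non-consensus state reaches a consensus state with positive probability in a bounded number of steps, so absorption occurs almost surely.

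The main step is the law of the final opinion. I would use a martingale argument. For each threshold \(c\), let \(M^c(t)=N^{-1}\#\{i:X_i(t)\le c\}\). An interaction between agents on opposite sides of \(c\) sends both to \(\le c\) (min branch) or both to \(>c\) (max branch), each with probability \(1/2\), so \(M^c\) is a bounded martingale. At consensus \(M^c_\infty=\1\{X_\infty\le c\}\), so \(\Pp(X_\infty\le c)=\E M^c(0)=m_0([0,c])\). Thus \(X_\infty\) has law \(m_0\).

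\emph{Lower bound.} As \(t\to\infty\), \((X_1^N(t),X_2^N(t))\to(X_\infty,X_\infty)\) almost surely, with bounded values, while \(f_t^{\otimes2}=f_0^{\otimes2}\) does not depend on \(t\). Lower semicontinuity of \(W_{1,2}\) under weak convergence therefore gives
\[
 \sup_t W_{1,2}\ge W_{1,2}\bigl(\Law(X_\infty,X_\infty),m_0^{\otimes2}\bigr).
\]
(The anchors are all \(0\) and play no role.) To bound this from below, test against the \(1\)-Lipschitz function \(\phi(z_1,z_2)=|x_1-x_2|/2\) for the metric \(d_{1,2}\). It integrates to \(0\) under the diagonal law and to \(\tfrac12\iint|x-y|\,dm_0\,dm_0\) under \(m_0^{\otimes2}\). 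This yields a positive bound, which suffices for the failure of the theorem with \(\ell\le1\). The stated constant (without the factor \(1/2\)) should instead follow from the cost, via the triangle inequality, of moving a diagonal point \((u,u)\) to \((x,y)\): \(|u-x|+|u-y|\ge|x-y|\). Any coupling then has cost at least \(\E|X-Y|\) with \(X,Y\) i.i.d. \(m_0\), and this integral is positive because \(m_0\) is not a Dirac mass.
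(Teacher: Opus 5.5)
Your proposal is correct and follows essentially the same route as the paper: the bounded drift with \(c_0=1\), the sorting inequality, \(T(m_0\otimes\delta_0)=m_0\otimes\delta_0\) plus uniqueness from Proposition~\ref{prop:wellposed}, absorption of the finite-state chain, and a bounded-martingale argument for the law of the consensus value. Your indicators \(\1\{X_i\le c\}\) are a special case of the paper's general \(\varphi\). The differences are cosmetic: you show \(\ell_x,\ell_y\ge1/2\) separately rather than comparing \((0,0)\) with \((1,1)\), and halving the test function is unnecessary, since \(|x_1-x_2|\) is already \(1\)-Lipschitz for \(d_{1,2}\). That unhalved function is exactly what the paper uses, and your coupling bound \(|u-x|+|u-y|\ge|x-y|\) recovers the same constant.
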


\begin{proof}
The update stays in \([0,1]\). Since \(|\Phi|^q\le1\),
the drift condition holds with \(c_0=1\) and all other
coefficients zero. The sorting inequality
\[
 |(x\wedge y)-(x'\wedge y')|
 +|(x\vee y)-(x'\vee y')|
 \le|x-x'|+|y-y'|
\]
gives
\[
 \E|\Phi(x,y;0,0,U)-\Phi(x',y';0,0,U)|
 \le\tfrac12|x-x'|+\tfrac12|y-y'|.
\]
Taking \((x,y)=(0,0)\) and \((x',y')=(1,1)\) shows that
any such bound must have \(\ell_x+\ell_y\ge1\).

Let \(X,Y\) be independent with a common law \(m\) on
\([0,1]\). For every bounded measurable \(\varphi\),
\[
\begin{aligned}
 \E\varphi(\Phi(X,Y;0,0,U))
 &=\tfrac12\E[\varphi(X\wedge Y)+\varphi(X\vee Y)]\\
 &=\tfrac12\E[\varphi(X)+\varphi(Y)]
 =\int\varphi\,dm.
\end{aligned}
\]
Hence \(T(m\otimes\delta_0)=m\otimes\delta_0\), proving
the claim about the nonlinear law.

Condition on the finite initial population. Updates create no new
opinion values, so only finitely many configurations are possible.
From any configuration, a sequence of pair updates can make every
agent adopt one opinion already present, with positive probability.
Thus no class of non-consensus states is closed, and consensus is
reached almost surely. Denote the final opinion by \(V_N\).

For bounded measurable \(\varphi:[0,1]\to\R\), the average
\(N^{-1}\sum_i\varphi(X_i^N(t))\) is a bounded martingale:
by the same identity used above, the expected sum over an
interacting pair is unchanged. Letting \(t\to\infty\) gives
\[
 \E[\varphi(V_N)\mid X_1^N(0),\ldots,X_N^N(0)]
 =\frac1N\sum_i\varphi(X_i^N(0)).
\]
Averaging over the independent initial opinions shows that
\(\Law(V_N)=m_0\).

The function \(h(z_1,z_2)=|x_1-x_2|\) is 1-Lipschitz for
\(d_{1,2}\). Its expectation in the finite system tends to zero
by consensus, whereas
\[
 \int h\,df_0^{\otimes2}
 =\iint|x-y|\,m_0(dx)m_0(dy)>0.
\]
The resulting lower bound on \(W_{1,2}\) holds after taking
the supremum in time for every \(N\ge2\). Taking the lower
limit in \(N\) proves \eqref{eq:critical-consensus-failure}.
\end{proof}

\subsection{Pure DeGroot gossip}\label{subsec:degroot-gossip}

Uniform-in-time approximation can still hold when \(\ell=1\).
For the averaging rule
\citep{DeGroot1974,Boyd2006}
\[
 \Phi(x,y)=(1-\omega)x+\omega y,\qquad0<\omega<1,
\]
the conserved empirical mean provides the needed control.
Fix zero anchors and start from \(f_0^{\otimes N}\), where
\(f_0=m_0\otimes\delta_0\) and \(m_0\) has mean \(m\)
and variance \(\sigma_0^2<\infty\). Write
\(f_t=m_t\otimes\delta_0\) for the nonlinear law.

Convexity bounds the second moments of both systems by
\(m^2+\sigma_0^2\). The nonlinear mean stays equal to \(m\),
and the rate-2 equation gives
\[
 \operatorname{Var}(m_t)
 =\sigma_0^2e^{-4\omega(1-\omega)t}.
\]
The finite system preserves
\(\bar X_N=N^{-1}\sum_iX_i^N(0)\). Put
\[
 V_N(t):=\frac1N\sum_i|X_i^N(t)-\bar X_N|^2.
\]
A pair update changes \(V_N\) by
\(-2\omega(1-\omega)(x_i-x_j)^2/N\).
Summing over the pair rates and using
\(\sum_{i<j}(x_i-x_j)^2=N^2V_N\) gives
\[
 \E V_N(t)
 =\frac{N-1}{N}\sigma_0^2
      e^{-4\omega(1-\omega)Nt/(N-1)}
 \le\sigma_0^2e^{-4\omega(1-\omega)t}.
\]
For fixed \(k\ge1\) and \(N\ge\max\{2,k\}\), exchangeability
and \(\E|\bar X_N-m|\le\sigma_0/\sqrt N\) give, by comparison
of both laws with \(\delta_{(m,0)}^{\otimes k}\),
\[
\begin{aligned}
 &W_{1,k}\bigl(\Law(Z_1^N(t),\ldots,Z_k^N(t)),f_t^{\otimes k}\bigr)\\
 &\qquad\le2k\sigma_0e^{-2\omega(1-\omega)t}
             +\frac{k\sigma_0}{\sqrt N}.
\end{aligned}
\]
On each fixed interval, Theorem \ref{thm:finite-time-main} and
Lemma \ref{lem:interp}, with \(p=1,r=2\), give convergence
in \(W_{1,k}\) from the uniform second-moment bound.
The same split at time \(T\) as in Section
\ref{subsec:proof-main}, followed by \(N\to\infty\) and
then \(T\to\infty\), proves
\[
 \lim_{N\to\infty}\sup_{t\ge0}
 W_{1,k}\bigl(\Law(Z_1^N(t),\ldots,Z_k^N(t)),f_t^{\otimes k}\bigr)=0.
\]

\subsection{A hard confidence threshold}\label{subsec:hard-threshold}

Consider the anchored variant of the Deffuant--Weisbuch rule
\citep{Deffuant2000},
\[
 \Phi(x,y;b,c)
 =x+\beta(b-x)+\mu\1_{\{|x-y|\le r\}}(y-x),
\]
with \(0\le\beta\le1\), \(0\le\mu\le1-\beta\), and
\(r\ge0\). These ranges preserve any closed opinion interval
that contains the anchors. The rule is measurable, so Theorem
\ref{thm:finite-time-main} applies with the threshold exactly
as written.

The obstruction to our contraction assumptions can be seen directly.
On \(E\), if \(\mu,r>0\), then for every \(\eps>0\),
\[
 |\Phi(0,r;0,0)-\Phi(0,r+\eps;0,0)|=\mu r.
\]
The input difference tends to zero, so neither Assumption
\ref{ass:w1contr} nor Assumption \ref{ass:square} can hold
on this domain. Thus our results do not give uniform-in-time
approximation across such a confidence boundary.
When the threshold covers the whole opinion interval, the update
reduces to the linear rule in Appendix \ref{subsec:linear-rules}.
The partner term also vanishes if \(\mu=0\) or \(r=0\).

\section{Technical proofs}\label{app:technical}

We give the supporting proofs used in Section \ref{sec:proofs},
along with an extension of the finite-time estimate to non-product
initial laws. The finite-system generator is recorded at the end
for reference.

\subsection{Elementary total-variation facts}
\label{subsec:technical-tv}

\begin{lemma}[Elementary total-variation facts]\label{lem:tvfacts}
Let \((S,\mathcal S)\) and \((S',\mathcal S')\) be measurable spaces.
\begin{longlist}[(iii)]
\item[(i)] If \(\mu,\nu,\alpha,\beta\in\mathcal P(S)\), then
\[
 \|\mu\otimes\alpha-\nu\otimes\beta\|_{\TV}
 \le \|\mu-\nu\|_{\TV}+\|\alpha-\beta\|_{\TV}.
\]
In particular,
\(\|\mu\otimes\mu-\nu\otimes\nu\|_{\TV}
\le2\|\mu-\nu\|_{\TV}\).
\item[(ii)] If \(X,Y\) are \(S\)-valued random variables on the same
probability space, with laws \(\mu,\nu\), and \(\{X\ne Y\}\) is
measurable, then
\[
 \|\mu-\nu\|_{\TV}\le2\Pp(X\ne Y).
\]
\item[(iii)] If \(\mu,\nu\in\mathcal P(S)\) and \(K\) is a Markov
kernel from \(S\) to \(S'\), let \(\mu K\) denote the law
after applying \(K\) to a state with law \(\mu\). Then
\[
 \|\mu K-\nu K\|_{\TV}\le\|\mu-\nu\|_{\TV}.
\]
\end{longlist}
\end{lemma}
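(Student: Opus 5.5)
The plan is to derive all three facts directly from the variational definition \eqref{eq:tv-convention}, where the supremum runs over measurable \(h\) with \(\|h\|_\infty\le1\). For (i), I would insert the intermediate measure \(\nu\otimes\alpha\) and use the triangle inequality:
\[
 \|\mu\otimes\alpha-\nu\otimes\beta\|_{\TV}
 \le\|\mu\otimes\alpha-\nu\otimes\alpha\|_{\TV}
   +\|\nu\otimes\alpha-\nu\otimes\beta\|_{\TV}.
\]
For the first term, fix a measurable \(h\) on \(S\times S\) with \(\|h\|_\infty\le1\) and set \(\bar h(x)=\int h(x,y)\,\alpha(dy)\). Then \(\bar h\) is measurable by Fubini's theorem and satisfies \(\|\bar h\|_\infty\le1\). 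Since
\[
 \int h\,d(\mu\otimes\alpha)-\int h\,d(\nu\otimes\alpha)
 =\int\bar h\,d\mu-\int\bar h\,d\nu,
\]
the first term is at most \(\|\mu-\nu\|_{\TV}\). The second term is handled symmetrically, integrating out the first coordinate against \(\nu\). The particular case then follows by taking \(\alpha=\mu\) and \(\beta=\nu\).

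For (ii), fix \(h\) with \(\|h\|_\infty\le1\). On the event \(\{X=Y\}\) the two evaluations agree, so
\[
 \E h(X)-\E h(Y)=\E\bigl[(h(X)-h(Y))\1_{\{X\ne Y\}}\bigr].
\]
The integrand is bounded by \(2\) and vanishes off \(\{X\ne Y\}\), so this difference is at most \(2\Pp(X\ne Y)\) in absolute value. Taking the supremum over \(h\) gives the claim. The measurability hypothesis on \(\{X\ne Y\}\) is exactly what this step needs, since the diagonal need not be measurable in a general product space.

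For (iii), fix a measurable \(h\) on \(S'\) with \(\|h\|_\infty\le1\) and define \(Kh(x)=\int h(y)\,K(x,dy)\). Because \(K\) is a Markov kernel, \(Kh\) is measurable on \(S\) and \(\|Kh\|_\infty\le1\). By definition of \(\mu K\),
\[
 \int h\,d(\mu K)-\int h\,d(\nu K)=\int Kh\,d\mu-\int Kh\,d\nu,
\]
so the difference is bounded by \(\|\mu-\nu\|_{\TV}\), and the supremum over \(h\) gives the result. None of these steps poses a real obstacle. The only point requiring care is the measurability of the partial integrals \(\bar h\) and \(Kh\), which comes from Fubini's theorem and the kernel axioms respectively.
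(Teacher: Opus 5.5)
Your proposal is correct and takes essentially the same route as the paper. For (i), your split through \(\nu\otimes\alpha\) is the paper's decomposition \((\mu-\nu)\otimes\alpha+\nu\otimes(\alpha-\beta)\); (ii) uses the same bound on \(\{X\ne Y\}\); and (iii) uses the same bound \(\|Kh\|_\infty\le\|h\|_\infty\). You simply spell out the Fubini measurability of the partial integrals, which the paper leaves implicit.
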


\begin{proof}
For \textup{(i)}, use
\[
 \mu\otimes\alpha-\nu\otimes\beta
 =(\mu-\nu)\otimes\alpha+\nu\otimes(\alpha-\beta)
\]
and test against measurable functions bounded in absolute value by
one. For \textup{(ii)}, such a function \(h\) satisfies
\[
 |\E h(X)-\E h(Y)|
 \le\E|h(X)-h(Y)|\le2\Pp(X\ne Y).
\]
For \textup{(iii)}, \(Kh(x):=\int h(y)K(x,dy)\) satisfies
\(\|Kh\|_\infty\le\|h\|_\infty\).
\end{proof}

\subsection{Well-posedness of the nonlinear equation}
\label{subsec:technical-wellposed}

\begin{proof}[Proof of Proposition \ref{prop:wellposed}]
For bounded measurable \(\varphi:E\to\R\), put
\[
 H_\varphi((x,b),(y,c))
 :=\int_U\varphi(\Phi(x,y;b,c,u),b)\,\theta(du).
\]
Since \(\|H_\varphi\|_\infty\le\|\varphi\|_\infty\),
Lemma \ref{lem:tvfacts}\textup{(i)} gives
\begin{equation}\label{eq:T-TV-Lip}
 \|T\mu-T\nu\|_{\TV}
 \le\|\mu\otimes\mu-\nu\otimes\nu\|_{\TV}
 \le2\|\mu-\nu\|_{\TV}.
\end{equation}

Fix \(0<h<1/4\). Let \(\mathcal C_h\) be the space of
total-variation continuous curves from \([0,h]\) to
\(\mathcal P(E)\), with distance
\[
 d_h(g,\widetilde g)
 :=\sup_{0\le t\le h}\|g_t-\widetilde g_t\|_{\TV}.
\]
This space is complete, because probability measures are complete in
total variation and uniform limits preserve continuity. Define
\[
 (\Gamma g)_t
 :=e^{-2t}f_0+2\int_0^t e^{-2(t-s)}Tg_s\,ds.
\]
The integral is a Bochner integral in the space of finite signed
measures with the total-variation norm. By \eqref{eq:T-TV-Lip}, its
integrand is continuous, and \(\Gamma\) maps \(\mathcal C_h\) into
itself. Moreover,
\[
 d_h(\Gamma g,\Gamma\widetilde g)
 \le4h\,d_h(g,\widetilde g).
\]
Banach's theorem gives a unique fixed point. Repeating this
construction on consecutive intervals of length \(h\) gives a unique
global solution of \eqref{eq:mild} that is continuous in total
variation.

Testing \eqref{eq:mild} against \(\varphi\in B_b(E)\) and
differentiating gives \eqref{eq:nl}. Conversely, a weak solution of
\eqref{eq:nl} satisfies
\(\|f_t-f_s\|_{\TV}\le4|t-s|\), since the derivative of
\(\langle\varphi,f_t\rangle\) is bounded by
\(4\|\varphi\|_\infty\). Integrating the equation with the factor
\(e^{2t}\) then gives \eqref{eq:mild}. Thus the weak and mild
formulations have the same unique solution. Taking
\(\varphi(x,b)=\psi(b)\) in \eqref{eq:nl} shows that the anchor
marginal remains equal to its initial value.

For this solution, define the time-dependent transition kernel by
\[
 (K_t\varphi)(x,b)
 :=\int_{E\times U}
 \varphi(\Phi(x,y;b,c,u),b)\,f_t(dy,dc)\,\theta(du).
\]
The kernel is measurable in \((t,x,b)\), by measurability of
\(\Phi\) and total-variation continuity of \(f_t\).
Starting with law \(f_0\), use a rate-\(2\) Poisson clock and apply
\(K_t\) at each ring time \(t\). These kernels determine the law
of the successive states. There are almost surely finitely many rings
on bounded time intervals, so the construction gives a c\`adl\`ag
process, uniquely in law.

Let \(g_t\) be its time-\(t\) law. Conditioning on the last ring
before time \(t\) gives
\[
 g_t=e^{-2t}f_0+2\int_0^t e^{-2(t-s)}g_sK_s\,ds.
\]
The nonlinear solution satisfies the same equation, because
\(f_sK_s=Tf_s\). Lemma \ref{lem:tvfacts}\textup{(iii)} therefore
gives
\[
 \|g_t-f_t\|_{\TV}
 \le2\int_0^t e^{-2(t-s)}\|g_s-f_s\|_{\TV}\,ds.
\]
Gronwall's lemma yields \(g_t=f_t\). This identifies the nonlinear
path law \(Q_{f_0,T}\) and proves the proposition.
\end{proof}

\subsection{An extension to non-product initial laws}
\label{subsec:technical-initial}

The graphical construction also gives a finite-time estimate when
small sets of initial coordinates are approximately independent.

\begin{proposition}[Finite-time estimate for non-product initial laws]
\label{prop:localized-initial}
Fix \(N\ge2\), \(0\le T<\infty\), an initial law
\(\Pi_0^N\in\mathcal P(E^N)\), and \(f_0\in\mathcal P(E)\).
For \(1\le m\le N\), set
\[
 \Delta_{N,m}(f_0;\Pi_0^N)
 :=\sup_{I\subset\{1,\ldots,N\},\ 1\le|I|\le m}
 \left\|\Pi_0^{N,I}-f_0^{\otimes|I|}\right\|_{\TV},
\]
where \(\Pi_0^{N,I}\) is the marginal on the coordinates in
\(I\), listed in increasing order. For \(1\le k\le N\) and
distinct \(i_1,\ldots,i_k\),
\[
\begin{aligned}
 &\left\|\Law_{\Pi_0^N}\bigl(
 (Z_{i_1}^N(s))_{0\le s\le T},\ldots,
 (Z_{i_k}^N(s))_{0\le s\le T}\bigr)
 -Q_{f_0,T}^{\otimes k}\right\|_{\TV}\\
 &\qquad\le\Delta_{N,m}(f_0;\Pi_0^N)
 +\frac{2ke^{2T}}{m}+\eps_{N,k}(T).
\end{aligned}
\]
\end{proposition}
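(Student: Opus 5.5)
We reuse the graphical coupling from the proof of Theorem \ref{thm:finite-time-main}, changing only how the initial states are assigned. Run the backward exploration of Definition \ref{def:backward-cone} together with the labelled Boltzmann forest of Definition \ref{def:boltzmann-forest}, coupled as in Lemma \ref{lem:forest-coupling}. Both are built from the clocks and marks alone, so they are independent of the initial states. On \(\{\tau>T\}\), the set \(\mathcal V\) of physical labels used by the \(k\) backward cones is distinct and in bijection with the abstract lineages. By Lemma \ref{lem:cone-reconstruction}, the root paths are then obtained by one measurable map \(F\) applied to the marked graph and the initial states on \(\mathcal V\). The forest root paths are given by the same map \(F\) applied to independent \(f_0\) states on the lineages. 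The total-variation distance therefore splits into three pieces: the conflict event \(\{\tau\le T\}\), a large-cone event \(\{|\mathcal V|>m\}\), and the initial-law discrepancy on cones of size at most \(m\).

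First, the conflict event contributes at most \(2\Pp(\tau\le T)\le\eps_{N,k}(T)\), by Lemma \ref{lem:conflict} and Lemma \ref{lem:tvfacts}\textup{(ii)}. Second, for the large-cone event, note that on \(\{\tau>T\}\) we have \(|\mathcal V|=Y_T\), where \(Y\) is the Yule process of Lemma \ref{lem:conflict}. Markov's inequality gives \(\Pp(Y_T>m)\le \E Y_T/m=ke^{2T}/m\). We take the cone-size cutoff on \(Y_T\) computed in the forest, which is defined whether or not \(\tau\le T\) occurs. The event \(\{Y_T>m\}\) then costs at most \(2ke^{2T}/m\), again through the factor \(2\) in Lemma \ref{lem:tvfacts}\textup{(ii)}.

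Third, on \(G:=\{\tau>T,\ Y_T\le m\}\), condition on the graph data \(\mathcal G\). Given \(\mathcal G\), the finite system's initial states on \(\mathcal V\) have law \(\Pi_0^{N,\mathcal V}\), because the graph is independent of the initial states. The forest's initial states have law \(f_0^{\otimes|\mathcal V|}\). Since \(|\mathcal V|\le m\), a maximal coupling makes these two conditional laws differ with probability at most \(\frac12\Delta_{N,m}\). This requires ordering the coordinates of \(\mathcal V\) increasingly, as in the definition of \(\Pi_0^{N,I}\), and transporting to the lineages through the bijection. Off \(G\), we assign initial states independently as before. Integrating over \(\mathcal G\) gives
\[
\Pp(\text{root paths differ})\le \Pp(\tau\le T)+\Pp(Y_T>m)+\tfrac12\Delta_{N,m},
\]
and Lemma \ref{lem:tvfacts}\textup{(ii)} then yields
\[
\Delta_{N,m}+\frac{2ke^{2T}}{m}+\eps_{N,k}(T).
\]

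The main obstacle is making the third step rigorous. The conditional maximal coupling must be chosen measurably in \(\mathcal G\); when \(E\) is Polish this follows from a measurable selection of maximal couplings for kernels. One must also check that the resulting joint law still has the correct marginals: the finite system's full initial vector needs law \(\Pi_0^N\), including the coordinates outside \(\mathcal V\). To arrange this, first sample the \(\mathcal V\)-coordinates from the coupling, then sample the remaining coordinates from the conditional law \(\Pi_0^N(\cdot\mid z_{\mathcal V})\). Since \(\mathcal V\) is independent of the initial states, this gives law \(\Pi_0^N\). The forest lineages receive product \(f_0\) states given \(\mathcal G\), so Lemma \ref{lem:forest-law} still identifies their law as \(Q^{\otimes k}\).
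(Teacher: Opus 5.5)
Your argument is correct and gives exactly the stated constants, but it is organized differently from the paper's. The paper does not reopen the forest coupling here. It runs the system from $\Pi_0^N$ and from $f_0^{\otimes N}$ with the same clocks and marks and conditions on these driving variables. Lemma \ref{lem:cone-reconstruction} together with Lemma \ref{lem:tvfacts}\textup{(iii)} then shows that the two conditional path laws differ by at most $\Delta_{N,m}$ when the finite cone size $L_T$ is at most $m$, and by at most $2$ otherwise. The paper bounds $\E L_T\le ke^{2T}$ directly from the growth rate $2l(N-l)/(N-1)\le 2l$ of the finite cone. It then invokes Theorem \ref{thm:finite-time-main} as a black box for the product-started system and concludes with the triangle inequality.

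You instead build a single coupling of the $\Pi_0^N$ system with the labelled forest. The conflict event, the large-cone event and the initial-law mismatch all sit inside one disagreement event; you measure cone size by the forest's Yule count $Y_T$, which equals $|\mathcal V|$ on $\{\tau>T\}$. This gives an explicit pathwise coupling. The price is the bookkeeping you flag: conditional maximal couplings, and extending the $\mathcal V$-coordinates to a full vector with law $\Pi_0^N$.

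The paper's route avoids that bookkeeping by comparing conditional laws instead of constructing couplings. It is also modular: the comparison of $\Pi_0^N$ with $f_0^{\otimes N}$ concerns the finite system alone, so any improvement of Theorem \ref{thm:finite-time-main} carries over directly.

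The measurability issue you raise is easier than you suggest. $\mathcal V$ takes only finitely many values, so you can fix one maximal coupling of $\Pi_0^{N,I}$ and $f_0^{\otimes |I|}$ in advance for each $I$ with $|I|\le m$. Since $E^N$ is Polish, the regular conditional law needed for the remaining coordinates exists.
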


\begin{proof}
Use the same law of the pair clocks and marks for the systems started
from \(\Pi_0^N\) and \(f_0^{\otimes N}\). These driving variables
are independent of the initial states. Let \(V_T\) be the union of
the initial labels required by the backward cones of
\(i_1,\ldots,i_k\), and put \(L_T=|V_T|\).

Fix a realization of the driving variables. By Lemma
\ref{lem:cone-reconstruction}, the tagged paths are a measurable
function of the initial coordinates in \(V_T\). If \(L_T\le m\),
the two conditional path laws therefore differ in total variation by
at most \(\Delta_{N,m}(f_0;\Pi_0^N)\). On the complementary event,
the difference is at most two. Integrating over the common driving
variables shows that the unconditional path laws differ by at most
\[
 \Delta_{N,m}(f_0;\Pi_0^N)+2\Pp(L_T>m).
\]

When the union contains \(l\) labels, new labels are added at rate
\(2l(N-l)/(N-1)\le2l\). Since it starts with \(k\) labels,
\(\E L_T\le ke^{2T}\). Markov's inequality yields
\(\Pp(L_T>m)\le ke^{2T}/m\). Finally, Theorem
\ref{thm:finite-time-main} compares the system started from
\(f_0^{\otimes N}\) with
\(Q_{f_0,T}^{\otimes k}\). The triangle inequality gives the stated
bound.
\end{proof}

\subsection{Uniform moment bounds}
\label{subsec:technical-moments}

\begin{proof}[Proof of Lemma \ref{lem:moment}]
Let \(a,C,K\) be as in the statement, and write
\[
 M_0=\int|x|^q f_0(dx,db),\qquad
 \bar b_q=\int|b|^q\rho(db).
\]
For every \(g\in\mathcal P_{\rho,q}(D)\), the drift assumption gives
\begin{equation}\label{eq:T-moment-drift}
 \int|x|^q\,Tg(dx,db)
 \le a\int|x|^q\,g(dx,db)+C.
\end{equation}

We first obtain the nonlinear bound from the Picard construction in
the proof of Proposition \ref{prop:wellposed}. Fix \(0<h<1/4\), and
suppose that \(\int|x|^qf_{t_0}(dx,db)\le K\). On \([0,h]\), set
\[
\begin{aligned}
 g_s^{(0)}&=f_{t_0},\\
 g_s^{(n+1)}
 &=e^{-2s}f_{t_0}
 +2\int_0^s e^{-2(s-r)}Tg_r^{(n)}\,dr.
\end{aligned}
\]
The iterates remain supported on \(D\) and retain anchor marginal
\(\rho\). Since \(aK+C\le K\), equation
\eqref{eq:T-moment-drift} gives, by induction,
\[
 \int|x|^qg_s^{(n+1)}(dx,db)
 \le e^{-2s}K+(1-e^{-2s})(aK+C)\le K,
 \qquad 0\le s\le h.
\]
The iterates converge in total variation to \(f_{t_0+s}\).
Passing to the limit against \(|x|^q\wedge R\), and then letting
\(R\to\infty\), preserves the moment bound. Starting at
\(t_0=0\) and repeating the argument on successive intervals gives
\begin{equation}\label{eq:nonlinear-uniform-moment}
 \sup_{t\ge0}\int|x|^qf_t(dx,db)\le K.
\end{equation}

For the finite system, let \(\widehat X^N(m)\) be the opinions
after \(m\) pair updates. Let \(\mathcal F_m\) be generated by the
initial state and the first \(m\) updates, and let \(P_{m+1}\) be the pair
chosen at the next update. Write
\[
 S_m=\frac1N\sum_{i=1}^N|\widehat X_i^N(m)|^q,
 \qquad
 \overline B_{q,N}=\frac1N\sum_{i=1}^N|B_i|^q.
\]
Starting from the integrable \(S_0\), the drift bound makes
\(S_{m+1}\) integrable whenever \(S_m\) is. We may therefore apply
\eqref{eq:qdrift} to both agents in the selected pair and subtract
their current moments. For the second agent, the same estimate
follows from \(\theta\circ\iota^{-1}=\theta\). Thus
\[
\begin{aligned}
 &N\E[S_{m+1}-S_m\mid\mathcal F_m,P_{m+1}=\{i,j\}]\\
 &\quad\le
 -(1-a)\bigl(|\widehat X_i^N(m)|^q+|\widehat X_j^N(m)|^q\bigr)\\
 &\qquad +(c_b+c_c)(|B_i|^q+|B_j|^q)+2c_0.
\end{aligned}
\]
Averaging over the uniformly selected unordered pair gives
\begin{equation}\label{eq:embedded-moment-recursion}
 \E[S_{m+1}\mid\mathcal F_m]
 \le\left(1-\frac{2(1-a)}N\right)S_m
 +\frac2N\bigl((c_b+c_c)\overline B_{q,N}+c_0\bigr).
\end{equation}
Since \(N\ge2\) and \(0\le a<1\),
\[
 r_N:=1-\frac{2(1-a)}N\in[0,1).
\]
Iterating \eqref{eq:embedded-moment-recursion}, with \(r_N^0=1\), gives
\begin{equation}\label{eq:embedded-moment-iteration}
 \E[S_m\mid Z^N(0)]
 \le r_N^mS_0
 +(1-r_N^m)\frac{(c_b+c_c)\overline B_{q,N}+c_0}{1-a}.
\end{equation}
The superposition of the pair clocks has rate \(N\). Hence the
number \(J_t\) of pair updates by time \(t\) is Poisson with mean
\(Nt\), independently of the embedded chain and its initial state.
In particular,
\[
 \E[r_N^{J_t}]=e^{Nt(r_N-1)}=e^{-2(1-a)t}.
\]
Since \(\E S_0=M_0\) and
\(\E\overline B_{q,N}=\bar b_q\), equation
\eqref{eq:embedded-moment-iteration} yields
\begin{equation}\label{eq:finite-uniform-moment}
 \frac1N\sum_{i=1}^N\E|X_i^N(t)|^q
 \le e^{-2(1-a)t}M_0
 +(1-e^{-2(1-a)t})\frac{C}{1-a}
 \le K.
\end{equation}

The product initial law is exchangeable. Permuting the labels also
preserves the transition rule: pair selection is uniform, and a
reversal of the order within a pair is absorbed by \(u\mapsto\iota u\).
Thus the law remains exchangeable, and each individual moment equals
the average in \eqref{eq:finite-uniform-moment}.
\end{proof}

\subsection{Interpolation from total variation to Wasserstein}
\label{subsec:technical-interpolation}

\begin{proof}[Proof of Lemma \ref{lem:interp}]
Choose a maximal coupling \((Z,\widetilde Z)\) of \(\mu\) and
\(\nu\). Under our total-variation convention,
\[
 \delta:=\Pp(Z\ne\widetilde Z)
 =\frac12\|\mu-\nu\|_{\TV}.
\]
By H\"older's inequality,
\[
\begin{aligned}
 W_{p,k}(\mu,\nu)^p
 &\le\E\bigl[d_{p,k}(Z,\widetilde Z)^p
                  \1_{\{Z\ne\widetilde Z\}}\bigr]\\
 &\le\bigl(\E d_{p,k}(Z,\widetilde Z)^r\bigr)^{p/r}
        \delta^{1-p/r}.
\end{aligned}
\]
The triangle inequality gives
\[
 \E d_{p,k}(Z,\widetilde Z)^r
 \le2^{r-1}\E\bigl[d_{p,k}(Z,0)^r+d_{p,k}(\widetilde Z,0)^r\bigr]
 \le2^{r-1}M.
\]
Taking \(p\)-th roots proves the first estimate; the constant
\(2^{1-1/p}\) suffices. If the common support has diameter at most
\(R\), then
\[
 W_{p,k}(\mu,\nu)^p\le R^p\delta,
\]
so
\[
 W_{p,k}(\mu,\nu)
 \le R\left(\frac12\|\mu-\nu\|_{\TV}\right)^{1/p}.
\]
\end{proof}

\subsection{Completeness with fixed anchors}
\label{subsec:technical-completeness}

\begin{lemma}[Completeness with fixed anchors]
\label{lem:conditional-completeness}
Let \(D\subseteq E\) be closed, \(p\ge1\), and
\(\rho\in\mathcal P(\R)\). Then
\(\mathcal P_{\rho,p}(D)\) is complete under \(\W_p^\rho\).
For each fixed \(N\ge2\), the laws on \(D^N\) with anchor
marginal \(\rho^{\otimes N}\) and finite moment
\[
 \int\frac1N\sum_{i=1}^N|x_i|^p\,\Pi(dz)<\infty
\]
also form a complete metric space under
\[
 \mathcal W_{p,N}(\Pi,\widetilde\Pi)
 :=\left(\inf\E\left[
 \frac1N\sum_{i=1}^N|X_i-\widetilde X_i|^p
 \right]\right)^{1/p}.
\]
Here the infimum is over couplings of \(\Pi\) and
\(\widetilde\Pi\) whose anchors agree coordinatewise.
Convergence in either metric implies weak convergence on the
corresponding full opinion--anchor space. No moment assumption on
\(\rho\) is needed.
\end{lemma}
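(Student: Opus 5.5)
The plan is to reduce completeness of \(\mathcal P_{\rho,p}(D)\) under \(\W_p^\rho\) to completeness of an \(L^p\) space of random variables on a fixed probability space that carries the anchor. Take a Cauchy sequence \((\mu_n)\). After passing to a subsequence with \(\sum_n\W_p^\rho(\mu_n,\mu_{n+1})<\infty\), choose for each \(n\) an anchor-preserving coupling \(\pi_n\in\Gamma_\rho(\mu_n,\mu_{n+1})\) whose cost is within \(2^{-n}\) of the infimum. Each \(\pi_n\) disintegrates with respect to the common anchor \(B\sim\rho\). We then glue the couplings successively, conditionally on \(B\), using the gluing lemma on the Polish space \(\R\). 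Concretely, given \((X_1,\ldots,X_n,B)\), we sample \(X_{n+1}\) from the conditional law of the second coordinate of \(\pi_n\) given \((X_n,B)\). By Ionescu-Tulcea, this produces one probability space with variables \(B\sim\rho\) and \(X_n\) such that \((X_n,B)\sim\mu_n\) and \(\E|X_n-X_{n+1}|^p\) is approximately \(\W_p^\rho(\mu_n,\mu_{n+1})^p\).

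With this construction, \((X_n)\) is Cauchy in \(L^p\): the summable increments control \(\|X_n-X_m\|_{L^p}\). By completeness of \(L^p\), there is a limit \(X\in L^p\), and \(X_n\to X\) almost surely along a further subsequence. Let \(\mu\) be the law of \((X,B)\). Its anchor marginal is \(\rho\) and its opinion moment is finite. Since \(D\) is closed and \((X_n,B)\in D\) almost surely, the almost sure limit lies in \(D\), so \(\mu\in\mathcal P_{\rho,p}(D)\). The joint law of \((X_n,X,B)\) lies in \(\Gamma_\rho(\mu_n,\mu)\), which gives \(\W_p^\rho(\mu_n,\mu)\le\|X_n-X\|_{L^p}\to0\). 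A Cauchy sequence with a convergent subsequence converges, which completes the first claim. No moment of \(\rho\) is used, because \(B\) appears only as a shared coordinate and is never integrated against a cost.

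For weak convergence, note that \(\W_p^\rho(\mu_n,\mu)\to0\) produces couplings \((X_n',X',B)\) with \(X_n'\to X'\) in \(L^p\), hence in probability, and with identical anchors. For bounded Lipschitz \(\varphi\) on \(E\), this gives \(|\int\varphi\,d\mu_n-\int\varphi\,d\mu|\le\E[\min(2\|\varphi\|_\infty,\Lip(\varphi)|X_n'-X'|)]\to0\). Convergence of integrals of bounded Lipschitz functions yields weak convergence by the Portmanteau theorem.

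The \(N\)-agent space is handled by the same scheme with \(\R^N\) in place of \(\R\). Here the anchor vector \(\mathbf B\sim\rho^{\otimes N}\) plays the role of the conditioning variable, and the cost is \(N^{-1}\sum_i|x_i-\widetilde x_i|^p\), which is the \(L^p\) norm of \(\R^N\)-valued variables up to the factor \(N^{-1/p}\). Closedness of \(D^N\) handles the support, and weak convergence follows as before.

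The main obstacle is the measurable gluing step. We must glue conditionally on the anchor while keeping \(B\) literally the same variable across all \(n\), so that each consecutive pair has exactly the law \(\pi_n\). I would justify this by disintegrating \(\pi_n\) with respect to \((x,b)\), which is possible on standard Borel spaces, and then chaining the resulting kernels. It should also be checked that \(\W_p^\rho\) is a genuine metric on the space, in particular that the triangle inequality holds. That follows from the same conditional gluing argument and is presumably available from \citep{Chemseddine2025}.
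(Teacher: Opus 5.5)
Your proposal is correct and follows essentially the same route as the paper. Both arguments pass to a fast subsequence, glue near-optimal anchor-preserving couplings over a single common anchor, take the \(L^p\) limit and use closedness of \(D\) along an almost surely convergent subsequence, and obtain weak convergence from the bounded-Lipschitz estimate; the same estimate also shows that \(\W_p^\rho(\mu,\nu)=0\) forces \(\mu=\nu\), which completes your check that \(\W_p^\rho\) is a genuine metric.
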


\begin{proof}
Disintegration over the common anchor law gives admissible couplings,
whose costs are finite by the opinion moment assumption. Gluing
couplings and applying Minkowski's inequality gives the triangle
inequality for both distances.

Let \((\mu_n)\) be Cauchy under \(\W_p^\rho\). Pass to a
subsequence, still denoted by \((\mu_n)\), with
\(\W_p^\rho(\mu_{n+1},\mu_n)\le2^{-n}\). For each consecutive
pair, choose an anchor-preserving coupling satisfying
\[
 \|X_{n+1}-X_n\|_{L^p}
 \le\W_p^\rho(\mu_{n+1},\mu_n)+2^{-n}
 \le2^{1-n}.
\]
The gluing lemma realizes these couplings on one probability space
with a common anchor \(B\), so that \((X_n,B)\sim\mu_n\) for
every \(n\). Since
\[
 \sum_{n\ge1}\|X_{n+1}-X_n\|_{L^p}<\infty,
\]
the sequence \((X_n)\) converges in \(L^p\) to some \(X\).
Along a further subsequence, it converges almost surely. Closedness of
\(D\) then gives \((X,B)\in D\) almost surely. Thus
\(\mu:=\Law(X,B)\) belongs to \(\mathcal P_{\rho,p}(D)\), and
the common-anchor coupling gives
\[
 \W_p^\rho(\mu_n,\mu)\le\|X_n-X\|_{L^p}\longrightarrow0.
\]
The original Cauchy sequence converges to the same limit by the
triangle inequality.

For the finite-dimensional statement, repeat the argument with the
opinion norm
\[
 \|x\|_{p,N}:=\left(\frac1N\sum_{i=1}^N|x_i|^p\right)^{1/p}
\]
and glue over the common anchor vector. Completeness of
\(L^p\) for this finite-dimensional norm and closedness of \(D^N\)
give the required limit.

Finally, for every bounded Lipschitz \(\varphi:E\to\R\), a
common-anchor coupling yields
\[
 |\langle\varphi,\mu\rangle-\langle\varphi,\nu\rangle|
 \le\Lip(\varphi)\,\W_p^\rho(\mu,\nu).
\]
For a bounded function on \(E^N\) that is Lipschitz under
\(d_{p,N}\), the analogous bound has right-hand side
\(N^{1/p}\Lip(\varphi)\mathcal W_{p,N}(\Pi,\widetilde\Pi)\).
These bounds show that zero distance forces equality of the laws,
and that convergence in either metric implies weak convergence.
\end{proof}

\subsection{Finite-system generator}\label{subsec:finite-generator}

For reference, we record the generator on bounded measurable
functions. For \(z\in E^N\), write \(z_i=(x_i,b_i)\).
Given \(1\le i<j\le N\) and \(u\in U\), let \(z^{i,j,u}\)
be obtained by replacing coordinates \(i\) and \(j\) with
\[
 (\Phi(x_i,x_j;b_i,b_j,u),b_i),
 \qquad
 (\Phi(x_j,x_i;b_j,b_i,\iota u),b_j),
\]
respectively. For \(\varphi\in B_b(E^N)\),
\begin{equation}\label{eq:finite-generator}
 (L^N\varphi)(z)
 =\frac{2}{N-1}\sum_{1\le i<j\le N}\int_U
 \left[\varphi(z^{i,j,u})-\varphi(z)\right]\theta(du).
\end{equation}

\end{appendix}

\begin{funding}
Jingyi Zhang was supported in part by the Georgia Tech ARC-ACO Fellowship.
\end{funding}

\begin{supplement}
\stitle{Code for the numerical illustrations}
\sdescription{Python code for the numerical illustrations in Figures 1--3,
including the simulation parameters, random seeds, smoothing and plotting
routines.
}
\end{supplement}

\end{document}